\renewenvironment{equation*}{\[}{\]\ignorespacesafterend}
\crefname{hypothesis}{Hypothesis}{Hypotheses}
\title{Entrywise tensor-train approximation of large tensors via random embeddings\thanks{This research was funded in whole by the Austrian Science Fund (FWF) \href{https://doi.org/10.55776/F65}{10.55776/F65}. For open access purposes, the author has applied a CC BY public copyright license to any author-accepted manuscript version arising from this submission.}}
\author{
Stanislav Budzinskiy\thanks{Faculty of Mathematics, University of Vienna, Kolingasse 14-16, 1090 Vienna, Austria (\email{stanislav.budzinskiy\allowbreak@univie.ac.at}).}
}
\newcommand{\Real}{\mathbb{R}}
\newcommand{\N}{\mathbb{N}}
\newcommand{\Frob}{\mathrm{F}}
\newcommand{\TT}{\mathrm{TT}}
\newcommand{\Ell}{\mathrm{L}}
\newcommand{\Matrix}[1]{\bm{\mathrm{#1}}}
\newcommand{\Tensor}[1]{\bm{\mathcal{#1}}}
\newcommand{\trans}{\intercal}
\newcommand{\Pind}[1]{\bm{#1}}
\newcommand{\PindSet}{\bm{\mathfrak{I}}}
\DeclareMathOperator{\PindJoin}{\dot{\times}}
\DeclareMathOperator{\PindCat}{\dot{+}}
\newcommand{\PindKron}[2]{\overline{#1 #2}}
\newcommand{\Compl}[1]{{#1}^{\mathsf{c}}}
\newcommand{\Part}{\bm{\mathfrak{P}}}
\newcommand{\set}[3][]{#1\{ #2 : #3 #1\}}
\newcommand{\Proj}[3][]{\Pi_{#2} #1( #3 #1)}
\newcommand{\Norm}[3][]{#1\| #2 #1\|_{#3}}
\newcommand{\Dotp}[4][]{#1\langle #2, #3 #1\rangle_{\mathrm{#4}}}
\newcommand{\rank}[2][]{\mathrm{rank} #1( #2 #1)}
\newcommand{\ttrank}[2][]{\mathrm{rank_{TT}} #1( #2 #1)}
\newcommand{\Expectation}{\mathbb{E}}
\newcommand{\Probability}{\Pr}
\newcommand{\Prob}[2][]{\Probability #1\{ #2 #1\}}
\begin{document}

\maketitle

\begin{abstract}
The theory of low-rank tensor-train approximation is well understood when the approximation error is measured in the Frobenius norm. The entrywise maximum norm is equally important but is significantly weaker for large tensors, making the estimates obtained via the Frobenius norm and norm equivalence pessimistic or even meaningless. In this article, we derive a direct estimate of the entrywise approximation error that is applicable in some of these cases. The estimate is given in terms of the higher-order generalization of the matrix factorization norm, and its proof is based on the tensor-structured Hanson--Wright inequality. The theoretical results are accompanied by numerical experiments carried out with the method of alternating projections.
\end{abstract}

\begin{keywords}
tensor train, maximum norm, factorization norm, Hanson--Wright inequality, coherence, alternating projections
\end{keywords}

\begin{MSCcodes}
15A23, 15A69, 60F10, 65F55
\end{MSCcodes}

\section{Introduction}

The entrywise storage and processing of $n_1 \times \cdots \times n_d$ tensors become expensive when $n = \max\{n_1, \ldots, n_d\}$ is large. This fundamental difficulty, common in data-intensive areas such as scientific computing and data analysis, necessitates the use of low-parametric tensor representations. The reduction of complexity, however, comes at a typically inevitable cost of introducing errors in the entries of the tensor. It is therefore important to have a priori estimates that relate the compression rate to the achievable approximation errors for a specific representation format.

Tensor networks are among the most widespread and effective low-parametric representations: they leverage the idea of separation of variables and generalize low-rank matrix factorizations. A particularly successful example of a tensor network is the tensor-train (TT) format \cite{oseledets2009breaking, oseledets2011tensor}, also known as matrix product states \cite{vidal2003efficient, schollwock2011density}, which reduces the storage requirements from $\mathcal{O}(n^d)$ to $\mathcal{O}(d n r^2)$. The effective rank, $r$, measures the complexity of the TT representation.

The known a priori estimates of the error induced by TT approximation are formulated for the Frobenius norm $\Norm{\Tensor{A}}{\Frob}^2 = \sum_{i_1, \ldots, i_d} |\Tensor{A}(i_1, \ldots, i_d)|^2$ in terms of the singular values of the unfolding matrices \cite{oseledets2011tensor}. An estimate in the maximum norm $\Norm{\Tensor{A}}{\max} = \max_{i_1, \ldots, i_d} |\Tensor{A}(i_1, \ldots, i_d)|$ can be deduced from the norm inequality $\Norm{\Tensor{A}}{\max} \leq \Norm{\Tensor{A}}{\Frob}$, but becomes too pessimistic if the singular values decay slowly. We aim to derive a new a priori estimate of the entrywise TT approximation error given in terms of the \textit{higher-order generalization of the matrix factorization norm}.

\subsection{Matrix factorization norm and random embeddings}
For a matrix $\Matrix{A} \in \Real^{n_1 \times n_2}$ and $1 \leq p,q \leq \infty$, let $\Norm{\Matrix{A}}{p,q} = \sup\set{\Norm{\Matrix{Ax}}{q}}{\Norm{\Matrix{x}}{p} = 1}$, $\Norm{\Matrix{A}}{p} = \Norm{\Matrix{A}}{p,p}$, be the operator norm induced by the $\ell_p$ and $\ell_q$ vector norms. Less known is
\begin{equation}
\label{eq:gamma2}
    \gamma_2(\Matrix{A}) = \inf\set{\Norm{\Matrix{X}}{2,\infty} \Norm{\Matrix{Y}}{2,\infty}}{\Matrix{X}, \Matrix{Y}~\text{such that}~\Matrix{A} = \Matrix{X} \Matrix{Y}^\trans}.
\end{equation}
One can show that $\gamma_2$ is a norm, which we shall call the \textit{factorization norm}\footnote{It is also known as the max-norm \cite{srebro2005rank}, which should not be confused with $\Norm{\cdot}{\max}$ that we use.} \cite{linial2007lower}. It originates in~the theory of Banach spaces \cite[\S 13]{tomczak1989banach}, where a linear operator between Banach spaces $\mathfrak{B}_1 \to \mathfrak{B}_2$ is factorized into two linear operators through a Lebesgue space $\mathfrak{B}_1 \to \Ell_p \to \mathfrak{B}_2$. In \cref{eq:gamma2}, the matrix $\Matrix{A}$ is factorized as $(\Real^{n_2}, \ell_1) \to (\Real^k, \ell_2) \to (\Real^{n_1}, \ell_\infty)$ with an arbitrary $k \in \N$ (note that $\Norm{\Matrix{Y}}{2,\infty} = \Norm{\Matrix{Y}^\trans}{1,2}$).

\begin{remark}
The factorization norm is used in matrix completion \cite{srebro2004maximum, lee2010practical, cai2016matrix, foucart2020weighted} and communication complexity \cite{linial2007complexity, linial2007lower, matouvsek2020factorization}. By Grothendieck's inequality \cite{pisier2012grothendieck, friedland2019elementary}, its dual $\gamma_2^\ast(\Matrix{A}) = \sup\set{\Dotp{\Matrix{A}}{\Matrix{B}}{\Frob}}{\gamma_2(\Matrix{B}) = 1}$ satisfies $\Norm{\Matrix{A}}{\infty,1} \leq \gamma_2^\ast(\Matrix{A}) \leq K_G \Norm{\Matrix{A}}{\infty,1}$ with an absolute constant $K_G > 0$. It follows that $\gamma_2(\Matrix{A}) \leq \Norm{\Matrix{A}}{\infty,1}^\ast \leq K_G \gamma_2(\Matrix{A})$, where the dual norm of $\Norm{\cdot}{\infty,1}$ is equal to \cite{linial2007lower, johnston2012norms}
\begin{equation*}
    \Norm{\Matrix{A}}{\infty,1}^\ast = \inf \set[\Big]{\sum\nolimits_{\alpha} |\sigma_\alpha|}{\Matrix{A} = \sum\nolimits_\alpha \sigma_\alpha \Matrix{u}_\alpha \Matrix{v}_\alpha^\trans, ~\Norm{\Matrix{u}_\alpha}{\infty} = \Norm{\Matrix{v}_\alpha}{\infty} = 1}.
\end{equation*}
This is an analogue of the nuclear norm $\Norm{\Matrix{A}}{2}^\ast$ with $\ell_\infty$-normalized $\{ \Matrix{u}_\alpha \}$ and $\{ \Matrix{v}_\alpha \}$.
\end{remark}

The quality of low-rank approximation of a matrix in the maximum norm was related to its factorization norm in \cite{srebro2005rank}.

\begin{theorem}[{\cite[Lemma~5]{srebro2005rank}}]
\label{theorem:matrix_approx}
Let $\varepsilon \in (0,1)$ and $n_1, n_2 \in \N$. Consider
\begin{equation}
\label{eq:matrix_rank}
    r = \left\lceil  9 \log\left(3 n_1 n_2\right) / \varepsilon^2 \right\rceil \in \N.
\end{equation}
For every $\Matrix{A} \in \Real^{n_1 \times n_2}$, there exists $\Matrix{B} \in \Real^{n_1 \times n_2}$ of $\rank{\Matrix{B}} \leq r$ such that
\begin{equation*}
    \Norm{\Matrix{A} - \Matrix{B}}{\max} \leq \varepsilon \cdot \gamma_2(\Matrix{A}).
\end{equation*}
\end{theorem}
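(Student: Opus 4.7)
The plan is to combine a near-optimal factorization realizing $\gamma_2(\Matrix{A})$ with a Johnson--Lindenstrauss-type random projection and a union bound over all $n_1 n_2$ entries. For any $\delta > 0$, fix a factorization $\Matrix{A} = \Matrix{X} \Matrix{Y}^\trans$ with $\Matrix{X} \in \Real^{n_1 \times k}$ and $\Matrix{Y} \in \Real^{n_2 \times k}$ (some $k \in \N$) such that $\Norm{\Matrix{X}}{2,\infty} \Norm{\Matrix{Y}}{2,\infty} \leq (1+\delta) \gamma_2(\Matrix{A})$, and denote the rows of $\Matrix{X}$ and $\Matrix{Y}$ by $\Matrix{x}_i$ and $\Matrix{y}_j$, so that $\Matrix{A}_{ij} = \Matrix{x}_i^\trans \Matrix{y}_j$ with $\Norm{\Matrix{x}_i}{2} \leq \Norm{\Matrix{X}}{2,\infty}$ and $\Norm{\Matrix{y}_j}{2} \leq \Norm{\Matrix{Y}}{2,\infty}$.

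Next, draw a random matrix $\Matrix{G} \in \Real^{k \times r}$ with i.i.d.\ $\mathcal{N}(0, 1/r)$ entries, which satisfies $\Expectation[\Matrix{G}\Matrix{G}^\trans] = \Matrix{I}_k$, and form the candidate approximant $\Matrix{B} = (\Matrix{X}\Matrix{G})(\Matrix{Y}\Matrix{G})^\trans$, which has $\rank{\Matrix{B}} \leq r$. The entrywise error decomposes as $(\Matrix{A} - \Matrix{B})_{ij} = \Matrix{x}_i^\trans (\Matrix{I}_k - \Matrix{G}\Matrix{G}^\trans) \Matrix{y}_j = \Matrix{x}_i^\trans \Matrix{y}_j - \sum_{s=1}^{r} (\Matrix{G}_s^\trans \Matrix{x}_i)(\Matrix{G}_s^\trans \Matrix{y}_j)$, a mean-zero sum of $r$ i.i.d.\ sub-exponential variables (products of jointly Gaussian scalars). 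A Hanson--Wright / Bernstein-type tail bound therefore gives
\begin{equation*}
\Prob{|(\Matrix{A} - \Matrix{B})_{ij}| > \varepsilon \Norm{\Matrix{X}}{2,\infty} \Norm{\Matrix{Y}}{2,\infty}} \leq 2 \exp(-c r \varepsilon^2)
\end{equation*}
for $\varepsilon \in (0,1)$ and a universal constant $c > 0$.

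A union bound over the $n_1 n_2$ entries shows that the probability of failure at some entry is strictly less than one as soon as $2 n_1 n_2 \exp(-c r \varepsilon^2) < 1$, so some realization of $\Matrix{G}$ yields the required $\Matrix{B}$ with $\Norm{\Matrix{A} - \Matrix{B}}{\max} \leq \varepsilon \Norm{\Matrix{X}}{2,\infty} \Norm{\Matrix{Y}}{2,\infty} \leq \varepsilon (1+\delta) \gamma_2(\Matrix{A})$; letting $\delta \to 0$ then finishes the proof. The main obstacle is calibrating the constant $c$ so that the resulting threshold on $r$ matches \cref{eq:matrix_rank} exactly: this amounts to tracking the sharp Johnson--Lindenstrauss constants for inner products between vectors of arbitrary norm, including the factor $\log 3$ rather than $\log 2$ that typically appears when one folds the leading factor of $2$ from the two-sided tail into the exponent at the cost of a small inflation of the logarithm.
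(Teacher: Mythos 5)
Your strategy is exactly the one this paper attributes to the cited result \cite[Lemma~5]{srebro2005rank} (the paper itself gives no proof of \cref{theorem:matrix_approx}): factor $\Matrix{A}=\Matrix{X}\Matrix{Y}^\trans$ near-optimally for $\gamma_2$, compress with a random $\Matrix{G}$ satisfying $\Expectation[\Matrix{G}\Matrix{G}^\trans]=\Matrix{I}_k$, control each entry of $\Matrix{x}_i^\trans(\Matrix{I}_k-\Matrix{G}\Matrix{G}^\trans)\Matrix{y}_j$ by a concentration inequality, and finish with a union bound; your use of a Hanson--Wright/Bernstein tail is also the route of the alternative proof mentioned in the paper \cite{budzinskiy2024distance}. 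The structure is sound, but as written you prove a weaker statement than \cref{theorem:matrix_approx}: a Bernstein bound with an unspecified universal constant $c$ only yields $r=\lceil C\log(n_1n_2)/\varepsilon^2\rceil$ for some absolute $C$, not the explicit threshold \cref{eq:matrix_rank}, and you acknowledge rather than close this gap. To get the stated constants one must replace the generic tail by the explicit Johnson--Lindenstrauss inner-product concentration (e.g.\ via polarization, applying the norm-preservation bound to $\Matrix{x}_i\pm\Matrix{y}_j$, with its explicit exponent in $r\varepsilon^2$), which is what the original argument does; without that calibration the specific $9$ and $3$ in \cref{eq:matrix_rank} are not established. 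Two smaller points: for matrices the infimum defining $\gamma_2$ is attained, so you can take $\delta=0$ and avoid the limiting step; if you do keep $\delta\to 0$, you should say that the approximants $\Matrix{B}_\delta$ are bounded and the set $\set{\Matrix{B}}{\rank{\Matrix{B}}\leq r}$ is closed, so a limit point inherits both the rank bound and the error bound (this is precisely the compactness argument the paper spells out in \Cref{sec:proof_overview} for the tensor case).
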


The proof of \cref{theorem:matrix_approx} is based on taking a factorization $\Matrix{A} = \Matrix{X} \Matrix{Y}^\trans$, drawing a random matrix $\Matrix{R}$, and applying the Johnson--Lindenstrauss lemma to show that $\Matrix{A} \approx (\Matrix{X}\Matrix{R}) (\Matrix{Y}\Matrix{R})^\trans$ entrywise with constant probability. \cref{theorem:matrix_approx} was rediscovered with specific bounds on $\gamma_2(\Matrix{A})$ such as $\gamma_2(\Matrix{A}) \leq \Norm{\Matrix{A}}{2}$ in \cite{udell2019big} and $\gamma_2(\Matrix{A}) \leq \Norm{\Matrix{A}}{\max}$ for symmetric positive-semidefinite matrices in \cite{alon2013approximate}. A different proof based on the Hanson--Wright inequality was proposed in \cite{budzinskiy2024distance}.

\begin{example}
\label{ex:identity_matrix}
Let $\Matrix{I}_n \in \Real^{n \times n}$ be the identity matrix. For each $r \in [n-1]$, its best rank-$r$ approximation in the Frobenius norm satisfies $\Norm{\Matrix{I}_n - \Matrix{I}_n^{(r)}}{\max} = 1$, which is equivalent to rank-zero entrywise approximation. Meanwhile, $\gamma_2(\Matrix{I}_n) \leq 1$ and \cref{theorem:matrix_approx} guarantees that for any $\varepsilon \in (0,1)$ there exists a matrix $\Matrix{B}_n \in \Real^{n \times n}$ of rank bounded by \cref{eq:matrix_rank} such that $\Norm{\Matrix{I}_n - \Matrix{B}_n}{\max} \leq \varepsilon$. The size $n$ needs to be sufficiently large to ensure that $\Matrix{B}_n$ is not full-rank (see \Cref{sec:experimental_settings,tab:rank_values}).
\end{example}

\subsection{Tensor-train factorization quasinorm}

We aim to extend \cref{theorem:matrix_approx} to order-$d$ tensors ($d \geq 2$). A tensor $\Tensor{A} \in \Real^{n_1 \times \cdots \times n_d}$ is said to be represented\footnote{Every tensor admits a TT factorization; it can be obtained with the TT-SVD algorithm \cite{oseledets2011tensor}.} in the TT format with TT cores $\{ \Tensor{G}_s \}_{s=1}^{d}$ of size $\Tensor{G}_s \in \Real^{r_{s-1} \times n_s \times r_s}$ if
\begin{equation}
\label{eq:tt}
    \Tensor{A}(i_1, \ldots, i_d) = \sum_{\alpha_1 = 1}^{r_1} \cdots \sum_{\alpha_{d-1} = 1}^{r_{d-1}} \Tensor{G}_1(1, i_1, \alpha_1) \Tensor{G}_2(\alpha_1, i_2, \alpha_2) \ldots \Tensor{G}_d(\alpha_{d-1}, i_d, 1).
\end{equation}
In short, $\Tensor{A} = \TT(\Tensor{G}_1, \ldots, \Tensor{G}_d)$. Here, $r_0 = r_d = 1$, and the tuple $(r_1, \ldots, r_{d-1}) \in \N^{d-1}$ is called the TT rank of the factorization. The smallest (componentwise) TT rank among all exact factorizations of $\Tensor{A}$ is called its TT rank and denoted by $\ttrank{\Tensor{A}}$.

To generalize $\gamma_2(\Matrix{A})$, recall that $\Norm{\Matrix{A}}{2,\infty} = \max_{i \in [n_1]} \Norm{\Matrix{A}(i,:)}{2}$ for $\Matrix{A} \in \Real^{n_1 \times n_2}$. For a third-order tensor $\Tensor{G} \in \Real^{n_1 \times n_2 \times n_3}$ and a matrix norm $\Norm{\cdot}{\circ}$, we define
\begin{equation*}
    \Norm{\Tensor{G}}{\circ,\infty} = \max_{i \in [n_2]} \Norm{\Tensor{G}(:, i, :)}{\circ}.
\end{equation*}
This is a norm, and we use it to introduce an analogue of \eqref{eq:gamma2} for order-$d$ tensors:
\begin{equation}
\label{eq:gammatt}
    \gamma_{\circ}^{\TT}(\Tensor{A}) = \inf\set[\Big]{\prod\nolimits_{s = 1}^{d} \Norm{\Tensor{G}_s}{\circ,\infty}}{\{ \Tensor{G}_s \}_{s=1}^{d}~\text{such that}~\Tensor{A} = \TT(\Tensor{G}_1, \ldots, \Tensor{G}_d)}.
\end{equation}
We will work with $\gamma_{\Frob}^{\TT}$ and call it the \textit{TT factorization (Frobenius) quasinorm}.

\begin{remark}
\label{remark:cp}
A factorization quasinorm based on the canonical polyadic (CP) tensor format \cite{hitchcock1927expression, kolda2009tensor} was introduced in the context of tensor completion \cite{ghadermarzy2019near, harris2021deterministic}. The TT factorization quasinorm $\gamma_{\Frob}^{\TT}$ is more general as the CP format is a specific type of the TT format \cite{oseledets2011tensor}. See \cite{cao20241} for a different higher-order extension of \cref{eq:gamma2}.
\end{remark}

\begin{lemma}
\label{lemma:tt_factorization_quasinorm}
If $\Norm{\Matrix{A}\Matrix{B}}{\circ} \leq \Norm{\Matrix{A}}{\circ} \Norm{\Matrix{B}}{\circ}$ for all compatible $\Matrix{A}$, $\Matrix{B}$, then $\gamma_{\circ}^{\TT}$ satisfies
\begin{enumerate}
    \item $\gamma_{\circ}^{\TT}(\Tensor{A}) = 0$ if and only if $\Tensor{A} = 0$,
    \item $\gamma_{\circ}^{\TT}(c\Tensor{A}) = |c| \cdot \gamma_{\circ}^{\TT}(\Tensor{A})$ for every $c \in \Real$ and $\Tensor{A} \in \Real^{n_1 \times \cdots \times n_d}$,
    \item $\gamma_{\Frob}^{\TT}(\Tensor{A} + \Tensor{B}) \leq 2^{d/2-1} \cdot (\gamma_{\Frob}^{\TT}(\Tensor{A}) + \gamma_{\Frob}^{\TT}(\Tensor{B}))$ for all $\Tensor{A}, \Tensor{B} \in \Real^{n_1 \times \cdots \times n_d}$,
    \item $\gamma_{2}^{\TT}(\Tensor{A} + \Tensor{B}) \leq \max \{ \gamma_{2}^{\TT}(\Tensor{A}), \gamma_{2}^{\TT}(\Tensor{B}) \}$ for all $\Tensor{A}, \Tensor{B} \in \Real^{n_1 \times \cdots \times n_d}$.
\end{enumerate}
\end{lemma}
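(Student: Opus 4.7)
The plan is to derive the four properties in order, using a common toolkit: an entrywise bound via submultiplicativity for (1), a single-core rescaling for (2), and the block-structured TT sum combined with balanced cores for (3) and (4).

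For (1), I would observe that each entry of $\Tensor{A} = \TT(\Tensor{G}_1, \ldots, \Tensor{G}_d)$ is a $1 \times 1$ matrix product of slices, namely $\Tensor{A}(i_1, \ldots, i_d) = \Tensor{G}_1(1, i_1, :) \Tensor{G}_2(:, i_2, :) \cdots \Tensor{G}_d(:, i_d, 1)$, so that submultiplicativity of $\Norm{\cdot}{\circ}$ together with the fact that any matrix norm coincides with $|\cdot|$ on scalars gives $|\Tensor{A}(i_1, \ldots, i_d)| \leq \prod_s \Norm{\Tensor{G}_s(:, i_s, :)}{\circ} \leq \prod_s \Norm{\Tensor{G}_s}{\circ, \infty}$. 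Maximizing over indices and infimizing over factorizations yields $\Norm{\Tensor{A}}{\max} \leq \gamma_\circ^{\TT}(\Tensor{A})$, which is the non-trivial implication of (1); the converse is immediate from the zero factorization. Property (2) follows by rescaling a single core by $c$ and applying the resulting inequality in both directions.

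For (3) and (4) I would use the standard block construction of the TT sum: given $\Tensor{A} = \TT(\Tensor{G}_1, \ldots, \Tensor{G}_d)$ and $\Tensor{B} = \TT(\Tensor{H}_1, \ldots, \Tensor{H}_d)$, the representation $\Tensor{A} + \Tensor{B} = \TT(\Tensor{K}_1, \ldots, \Tensor{K}_d)$ concatenates slices horizontally at $s = 1$, vertically at $s = d$, and block-diagonally for $1 < s < d$. Before forming this sum, I would exploit (2) on each side to rescale the cores so that all $\Norm{\Tensor{G}_s}{\circ, \infty}$ coincide, equal to $a^{1/d}$ where $a$ is the current product, and similarly all $\Norm{\Tensor{H}_s}{\circ, \infty} = b^{1/d}$. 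For (3) with $\circ = \Frob$, every block shape (horizontal, vertical, block-diagonal) obeys the identity $\Norm{\Tensor{K}_s(:, i_s, :)}{\Frob}^2 = \Norm{\Tensor{G}_s(:, i_s, :)}{\Frob}^2 + \Norm{\Tensor{H}_s(:, i_s, :)}{\Frob}^2$, so $\prod_s \Norm{\Tensor{K}_s}{\Frob, \infty} \leq (a^{2/d} + b^{2/d})^{d/2}$, and the factor $2^{d/2 - 1}$ then comes from Jensen's inequality for the convex map $t \mapsto t^{d/2}$ (which requires $d \geq 2$), giving $(a^{2/d} + b^{2/d})^{d/2} \leq 2^{d/2 - 1}(a + b)$. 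An infimum over factorizations finishes (3).

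For (4) with $\circ = 2$, the key additional fact is that the spectral norm of a block-diagonal matrix equals the maximum of the two blocks' spectral norms, producing the $\max$-rule at every interior core, whereas the thin first and last cores yield a Frobenius-type square-root-of-sum rule (their slices being row/column vectors whose spectral and Euclidean norms coincide). The \emph{main obstacle}, as I see it, is to reconcile the endpoint $\sqrt{a^{2/d} + b^{2/d}}$ contributions with the interior $\max$ factors so that the final product collapses to the literal $\max$ of $a$ and $b$; I expect this requires a more flexible rescaling that transfers mass from the thin endpoint cores to the fat interior ones before summing (exploiting the per-core freedom from (2) separately on each side), followed by a limiting argument.
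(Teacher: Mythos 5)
Parts (1)--(3) of your plan are correct and essentially the same as the paper's proof: the entrywise bound via submultiplicativity for (1), the single-core rescaling for (2), and for (3) the block TT representation of the sum with balanced cores, additivity of squared Frobenius norms over the blocks, and convexity of $t \mapsto t^{d/2}$, including the limiting/infimum step --- the paper argues identically (it even writes the block-diagonal form for all cores, while you correctly note the endpoint slices are horizontal/vertical concatenations, which changes nothing for the Frobenius norm). One small caveat in (1): a general submultiplicative matrix norm need not \emph{equal} $|\cdot|$ on $1\times 1$ matrices; but submultiplicativity forces $\Norm{[1]}{\circ}\geq 1$, hence $\Norm{[a]}{\circ}\geq |a|$, which is the only direction your argument uses.

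For (4) you stop short of a proof, and the obstacle you identify is genuine --- in fact it cannot be overcome. In any valid TT representation of $\Tensor{A}+\Tensor{B}$ the boundary ranks are one, so the first and last cores must be concatenations, and the spectral norm of $\begin{bmatrix}\Matrix{x} & \Matrix{y}\end{bmatrix}$ is $(\Norm{\Matrix{x}}{2}^2+\Norm{\Matrix{y}}{2}^2)^{1/2}$, not the maximum; the paper's one-line proof of (4) applies the block-diagonal max identity to every core and elides exactly this point. No rescaling that ``transfers mass'' to the interior cores can rescue the literal max-rule, because the stated inequality is incompatible with parts (1)--(2): taking $\Tensor{A}=\Tensor{B}\neq 0$ gives $\gamma_{2}^{\TT}(\Tensor{A}+\Tensor{B})=2\,\gamma_{2}^{\TT}(\Tensor{A})>\max\{\gamma_{2}^{\TT}(\Tensor{A}),\gamma_{2}^{\TT}(\Tensor{B})\}$; already for $d=2$ the quantity $\gamma_{2}^{\TT}$ is the usual matrix factorization norm $\gamma_2$, which satisfies the triangle inequality rather than a max-rule. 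What the block construction with balanced cores actually yields, writing $a=\gamma_{2}^{\TT}(\Tensor{A})$ and $b=\gamma_{2}^{\TT}(\Tensor{B})$, is $\gamma_{2}^{\TT}(\Tensor{A}+\Tensor{B})\leq \big(a^{2/d}+b^{2/d}\big)\max\{a,b\}^{1-2/d}\leq 2\max\{a,b\}$ (for $d=2$ this is exactly $a+b$). So your hesitation is well placed: the max-rule holds core-wise only for the interior, block-diagonal cores, and a weaker bound of the above form is the most your (and the paper's) construction can deliver.
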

\begin{proof}
    1. Let $\gamma_{\circ}^{\TT}(\Tensor{A}) = 0$ and take a minimizing sequence of TT cores $\{ \Tensor{G}_s^{(l)} \}_{s = 1}^{d}$ such that $\Tensor{A} = \TT(\Tensor{G}_1^{(l)}, \ldots, \Tensor{G}_d^{(l)})$ and $\prod_{s = 1}^{d} \Norm{\Tensor{G}_s^{(l)}}{\circ,\infty} \leq 2^{-l}$. For every entry of $\Tensor{A}$,
    \begin{equation*}
        |\Tensor{A}(i_1, \ldots, i_d)| \leq \prod\nolimits_{s = 1}^{d} \Norm{\Tensor{G}_s^{(l)}(:,i_s,:)}{\circ} \leq \prod\nolimits_{s = 1}^{d} \Norm{\Tensor{G}_s^{(l)}}{\circ,\infty} \leq 2^{-l}.
    \end{equation*}
    Hence, we get $\Tensor{A} = 0$ as $l \to \infty$. The other direction is obvious.

    2. If $c = 0$, the proof is finished. Otherwise, take a sequence of TT cores for $\Tensor{A}$ such that $\prod_{s = 1}^{d} \Norm{\Tensor{G}_s^{(l)}}{\circ,\infty} \leq \gamma_{\circ}^{\TT}(\Tensor{A}) + 2^{-l}$. Since $c\Tensor{A} = \TT(c \Tensor{G}_1^{(l)}, \Tensor{G}_2^{(l)}, \ldots, \Tensor{G}_d^{(l)})$, we get $\gamma_{\circ}^{\TT}(c\Tensor{A}) \leq |c| \prod_{s = 1}^{d} \Norm{\Tensor{G}_s^{(l)}}{\circ,\infty}$ so that $\gamma_{\circ}^{\TT}(c\Tensor{A}) \leq |c| \gamma_{\circ}^{\TT}(\Tensor{A})$ as $l \to \infty$. The inverse inequality is obtained similarly.

    3. Take two sequences of TT cores (for $\Tensor{A}$ and $\Tensor{B}$): $\prod_{s = 1}^{d} \Norm{\Tensor{G}_s^{(l)}}{\Frob,\infty} \leq \gamma_{\Frob}^{\TT}(\Tensor{A}) + 2^{-l}$ and $\prod_{s = 1}^{d} \Norm{\Tensor{H}_s^{(l)}}{\Frob,\infty} \leq \gamma_{\Frob}^{\TT}(\Tensor{B}) + 2^{-l}$. Then $\Tensor{A} + \Tensor{B} = \TT(\Tensor{F}_1^{(l)}, \ldots, \Tensor{F}_d^{(l)})$ with
    \begin{equation*}
        \Tensor{F}_s^{(l)}(:, i_s, :) =
        \begin{bmatrix}
              \Tensor{G}_s^{(l)}(:, i_s, :) & 0 \\
              0 & \Tensor{H}_s^{(l)}(:, i_s, :)
        \end{bmatrix}, \quad \Norm{\Tensor{F}_s^{(l)}}{\Frob,\infty}^2 \leq \Norm{\Tensor{G}_s^{(l)}}{\Frob,\infty}^2 + \Norm{\Tensor{H}_s^{(l)}}{\Frob,\infty}^2.
    \end{equation*}
    Without loss of generality, assume $\Norm{\Tensor{G}_1^{(l)}}{\Frob,\infty} = \cdots = \Norm{\Tensor{G}_d^{(l)}}{\Frob,\infty} = g_l$ and $\Norm{\Tensor{H}_1^{(l)}}{\Frob,\infty} = \cdots = \Norm{\Tensor{H}_d^{(l)}}{\Frob,\infty} = h_l$. Then $\prod_{s = 1}^{d} \Norm{\Tensor{F}_s^{(l)}}{\Frob,\infty} \leq (g_l^2 + h_l^2)^{d/2}$ and, as $l \to \infty$,
    \begin{equation*}
        \gamma_{\Frob}^{\TT}(\Tensor{A} + \Tensor{B}) \leq \big(\gamma_{\Frob}^{\TT}(\Tensor{A})^{\frac{2}{d}} + \gamma_{\Frob}^{\TT}(\Tensor{B})^{\frac{2}{d}}\big)^{\frac{d}{2}} = 2^{\frac{d}{2}} \big(\tfrac{1}{2} \gamma_{\Frob}^{\TT}(\Tensor{A})^{\frac{2}{d}} + \tfrac{1}{2} \gamma_{\Frob}^{\TT}(\Tensor{B})^{\frac{2}{d}}\big)^{\frac{d}{2}}.
    \end{equation*}
    It remains to use the convexity of $x \mapsto x^{d/2}$ for $d \geq 2$.

    4. Note that $\Norm{\Tensor{F}_s^{(l)}}{2,\infty} \leq \max\{\Norm{\Tensor{G}_s^{(l)}}{2,\infty}, \Norm{\Tensor{H}_s^{(l)}}{2,\infty}\}$.
\end{proof}

\subsection{Contributions}
We generalize \cref{theorem:matrix_approx} to the case of TT approximation and derive a componentwise bound on the TT rank.

\begin{theorem}
\label{theorem:tt_approx}
Let $\varepsilon \in (0,1)$ and $n_1, \ldots, n_d \in \N$. Consider
\begin{equation}
\label{eq:tt_rank}
    r = \left\lceil \frac{c_d}{\varepsilon^2} \log\left(2e \prod\nolimits_{s = 1}^{d} n_s \right) \right\rceil \in \N,
\end{equation}
where $c_d > 0$ is an absolute constant that depends only on $d$ and $e$ is Euler's number. For every $\Tensor{A} \in \Real^{n_1 \times \cdots \times n_d}$, there exists $\Tensor{B} \in \Real^{n_1 \times \cdots \times n_d}$ of $\ttrank{\Tensor{B}} \preccurlyeq r$ such that
\begin{equation*}
    \Norm{\Tensor{A} - \Tensor{B}}{\max} \leq \varepsilon \cdot \gamma_{\Frob}^{\TT}(\Tensor{A}).
\end{equation*}
\end{theorem}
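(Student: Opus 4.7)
The plan is to mimic the matrix proof of \cite{srebro2005rank}, with the tensor-structured Hanson--Wright inequality (announced in the abstract) replacing Johnson--Lindenstrauss concentration. Fix $\delta > 0$ and pick a TT factorization $\Tensor{A} = \TT(\Tensor{G}_1, \ldots, \Tensor{G}_d)$ with $\prod_{s=1}^d \Norm{\Tensor{G}_s}{\Frob,\infty} \leq \gamma_{\Frob}^{\TT}(\Tensor{A}) + \delta$. Draw independent random embeddings $\Matrix{R}_s \in \Real^{r_s \times r}$ for $s = 1, \ldots, d-1$ with i.i.d.\ $\mathcal{N}(0, 1/r)$ entries (so that $\Expectation \Matrix{R}_s \Matrix{R}_s^\trans = \Matrix{I}_{r_s}$) and form compressed cores $\tilde{\Tensor{G}}_s(:, i_s, :) = \Matrix{R}_{s-1}^\trans \Tensor{G}_s(:, i_s, :) \Matrix{R}_s$, with trivial boundary factors $\Matrix{R}_0 = \Matrix{R}_d = 1$. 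The candidate $\Tensor{B} = \TT(\tilde{\Tensor{G}}_1, \ldots, \tilde{\Tensor{G}}_d)$ then satisfies $\ttrank{\Tensor{B}} \preccurlyeq (r, \ldots, r)$ by construction.

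Writing $\Matrix{M}_s = \Tensor{G}_s(:, i_s, :)$ for the slice matrices, the error at entry $(i_1, \ldots, i_d)$ reads
\begin{equation*}
    \Tensor{A}(i_1, \ldots, i_d) - \Tensor{B}(i_1, \ldots, i_d) = \Matrix{M}_1 \Matrix{M}_2 \cdots \Matrix{M}_d - \Matrix{M}_1 (\Matrix{R}_1 \Matrix{R}_1^\trans) \Matrix{M}_2 \cdots (\Matrix{R}_{d-1} \Matrix{R}_{d-1}^\trans) \Matrix{M}_d,
\end{equation*}
a mean-zero scalar polynomial of degree $2(d-1)$ in the Gaussian entries (independence and $\Expectation \Matrix{R}_s \Matrix{R}_s^\trans = \Matrix{I}_{r_s}$ give $\Expectation \Tensor{B}(i_1,\ldots,i_d) = \Tensor{A}(i_1,\ldots,i_d)$). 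The tensor-structured Hanson--Wright inequality should then furnish a tail bound of the form
\begin{equation*}
    \Prob{|\Tensor{A}(i_1, \ldots, i_d) - \Tensor{B}(i_1, \ldots, i_d)| > \varepsilon \cdot \gamma_{\Frob}^{\TT}(\Tensor{A})} \leq 2 \exp(-\tilde{c}_d\, r \varepsilon^2)
\end{equation*}
for $\varepsilon \in (0, 1)$, with a constant $\tilde{c}_d > 0$ depending only on $d$; the required norm factor comes from $\prod_s \Norm{\Matrix{M}_s}{\Frob} \leq \prod_s \Norm{\Tensor{G}_s}{\Frob,\infty} \leq \gamma_{\Frob}^{\TT}(\Tensor{A}) + \delta$.

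A union bound over all $\prod_s n_s$ entries with $r$ chosen as in \cref{eq:tt_rank} makes the total failure probability strictly below $1$, so some realization of the $\Matrix{R}_s$ yields an acceptable $\Tensor{B}$; sending $\delta \to 0$ completes the argument. The main obstacle will be the Hanson--Wright step itself: the matrix case concerns a single quadratic Gaussian form, whereas here one must tame a degree-$2(d-1)$ Gaussian polynomial and extract a sub-Gaussian exponent that is \emph{dimension-free} in the internal dimensions $r_1, \ldots, r_{d-1}$. This is presumably done by induction on the cores, with the $d$-dependent constant $c_d$ absorbing the passes through the intermediate sub-exponential regimes that would otherwise arise from a generic Gaussian-chaos tail.
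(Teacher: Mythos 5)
Your scaffolding matches the paper's: compress a (near-)optimal TT factorization with independent random embeddings $\Matrix{R}_s$, note $\Expectation \Tensor{B} = \Tensor{A}$ and $\ttrank{\Tensor{B}} \preccurlyeq r$, get an entrywise tail bound, and finish with a union bound. But the step you flag as "the main obstacle" is not a technicality to be presumed — it is essentially the entire proof, and your sketch of how it might go does not match what actually makes it work. The error at a fixed entry is not handled as a generic degree-$2(d-1)$ Gaussian polynomial (a generic chaos bound would not give a tail of the form $\exp(-\tilde{c}_d\, r\varepsilon^2)$ with constants independent of the internal ranks $k_1,\ldots,k_{d-1}$ of the chosen factorization), nor by an induction over cores. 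The paper exploits that the error is \emph{quadratic in each} of the $d-1$ independent matrices $\Matrix{R}_s$ and applies the tensor-structured Hanson--Wright moment bound of Bamberger--Krahmer--Melnyk (\cref{theorem:moments_bound}). That bound is expressed through partition norms of partial traces of the coefficient array, and the decisive work (\cref{lemma:specific_trace_singleton,corollary:specific_trace,lemma:cauchy_schwarz,lemma:number_of_pairs,lemma:specific_partition_norm,corollary:specific_partition_norm}) is to show that for the specific Kronecker-diagonal array $\Psi_{[d-1]}(\Matrix{W}_1,\ldots,\Matrix{W}_d)$ these norms are bounded by $r^{|\Compl{\Omega}|+\Theta_\pi}\prod_t\Norm{\Matrix{W}_t}{\Frob}$; only then does the variance normalization $\prod_s\eta_s^2 = r^{-(d-1)}$ turn every term into a non-positive power of $r$, giving $\Norm{\Delta-\Expectation\Delta}{\Ell_p} \lesssim_d \prod_t\Norm{\Matrix{W}_t}{\Frob}\sum_{\kappa}(p/r)^{\kappa/2}$ and hence, for $\varepsilon\in(0,1)$, the tail $e\cdot\exp(-r\varepsilon^2)$ independent of the $k_s$ (\cref{theorem:specific_moments_bound,corollary:specific_tail_bound}). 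Without this structural analysis your claimed tail bound is unsupported, and the $\varepsilon^{-2}\log(\prod_s n_s)$ rank bound does not follow.

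A smaller but real gap: "sending $\delta\to 0$" does not by itself produce a single $\Tensor{B}$ with $\Norm{\Tensor{A}-\Tensor{B}}{\max}\leq\varepsilon\,\gamma_{\Frob}^{\TT}(\Tensor{A})$, because the infimum defining $\gamma_{\Frob}^{\TT}$ need not be attained and your approximant changes with $\delta$. The paper closes this by noting that the set $\set{\Tensor{B}}{\ttrank{\Tensor{B}}\preccurlyeq r}$ is closed (intersection of bounded-rank unfolding constraints) and extracting a limit point of the bounded sequence $\Tensor{B}^{(l)}$; you need this (or an equivalent compactness argument) to pass from error $\leq\varepsilon(\gamma_{\Frob}^{\TT}(\Tensor{A})+\delta)$ for each $\delta$ to the stated conclusion.
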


To prove \cref{theorem:tt_approx}, we follow the technique developed in \cite{budzinskiy2024distance} for matrices and rely on the tensor-structured Hanson--Wright inequality of \cite{bamberger2022hanson}.

\begin{remark}
\cref{theorem:tt_approx} is formulated for real tensors because the results in \cite{bamberger2022hanson} are presented only in the real case. Their extension to complex tensors is beyond the scope of our article, but the complex variant of \cref{theorem:tt_approx} would follow immediately.
\end{remark}

As in \cref{theorem:matrix_approx}, the TT rank bound \cref{eq:tt_rank} scales as $\log(n)$ and suggests that the entrywise error decays like $r^{-1/2}$ as $r$ grows. The provable upper bound on $c_d$ grows at least as $d^d$, making \cref{theorem:tt_approx} primarily of theoretical interest and requiring $n$ to be large. The numerical results in \Cref{sec:numerical} for $d = 2,3$ suggest that $\gamma_{\Frob}^{\TT}$ has a higher impact on the quality of approximation than $c_d$ as $d$ increases.

We derive two corollaries from \cref{theorem:tt_approx} by estimating $\gamma_{\Frob}^{\TT}$ via specific TT factorizations. First, let $\Tensor{A}$ be represented in the CP format with factors $\Matrix{C}_s \in \Real^{n_s \times k}$,\begin{equation}
\label{eq:cp_format}
    \Tensor{A}(i_1, \ldots, i_d) = \sum\nolimits_{\alpha = 1}^{k} \Matrix{C}_1(i_1, \alpha) \Matrix{C}_2(i_2, \alpha) \ldots \Matrix{C}_d(i_d, \alpha),
\end{equation}
which is a specific type of the TT format (see \cref{remark:cp}). Denote by $\gamma^{\mathrm{CP}}$ the max-qnorm (the CP factorization quasinorm, in our nomenclature) introduced in \cite{ghadermarzy2019near}:
\begin{equation*}
\label{eq:gammacp}
    \gamma^{\mathrm{CP}}(\Tensor{A}) = \inf\set[\Big]{\prod\nolimits_{s = 1}^{d} \Norm{\Matrix{C}_s}{2,\infty}}{\{ \Matrix{C}_s \}_{s=1}^{d}~\text{such that}~\cref{eq:cp_format}}.
\end{equation*}

\begin{corollary}
\label{corollary:cp_approx}
Let $\varepsilon \in (0,1)$ and $n_1, \ldots, n_d \in \N$. Let $r \in \N$ be given by \cref{eq:tt_rank}. For every $\Tensor{A} \in \Real^{n_1 \times \cdots \times n_d}$, there exists $\Tensor{B} \in \Real^{n_1 \times \cdots \times n_d}$ of $\ttrank{\Tensor{B}} \preccurlyeq r$ such that
\begin{equation*}
    \Norm{\Tensor{A} - \Tensor{B}}{\max} \leq \varepsilon \cdot \gamma^{\mathrm{CP}}(\Tensor{A}).
\end{equation*}
\end{corollary}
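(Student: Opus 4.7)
The plan is to reduce \cref{corollary:cp_approx} to \cref{theorem:tt_approx} by exhibiting the inequality $\gamma_{\Frob}^{\TT}(\Tensor{A}) \leq \gamma^{\mathrm{CP}}(\Tensor{A})$. Once this is established, the conclusion is immediate: pick the TT approximation $\Tensor{B}$ supplied by \cref{theorem:tt_approx}, whose TT rank is bounded componentwise by $r$, and estimate the right-hand side $\varepsilon \cdot \gamma_{\Frob}^{\TT}(\Tensor{A}) \leq \varepsilon \cdot \gamma^{\mathrm{CP}}(\Tensor{A})$.

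To show $\gamma_{\Frob}^{\TT}(\Tensor{A}) \leq \gamma^{\mathrm{CP}}(\Tensor{A})$, I would take an arbitrary CP factorization \cref{eq:cp_format} with factors $\{\Matrix{C}_s\}_{s=1}^{d}$ and convert it into a TT factorization in the standard way (see \cite{oseledets2011tensor} as mentioned in \cref{remark:cp}). Namely, set $\Tensor{G}_1(1, i_1, \alpha) = \Matrix{C}_1(i_1, \alpha)$, $\Tensor{G}_d(\alpha, i_d, 1) = \Matrix{C}_d(i_d, \alpha)$, and for $s = 2, \ldots, d-1$ let $\Tensor{G}_s(:, i_s, :)$ be the $k \times k$ diagonal matrix $\diag(\Matrix{C}_s(i_s, 1), \ldots, \Matrix{C}_s(i_s, k))$. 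A direct substitution verifies $\Tensor{A} = \TT(\Tensor{G}_1, \ldots, \Tensor{G}_d)$.

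The essential computation is to check that $\Norm{\Tensor{G}_s}{\Frob,\infty} = \Norm{\Matrix{C}_s}{2,\infty}$ for every $s$. For the boundary cores this is obvious since the fiber $\Tensor{G}_1(:, i_1, :)$ (respectively $\Tensor{G}_d(\alpha, i_d, 1)$) is a single row (column) that equals $\Matrix{C}_1(i_1, :)$ (respectively $\Matrix{C}_d(i_d, :)$), whose $\ell_2$-norm equals its Frobenius norm. For an interior core, the Frobenius norm of a diagonal matrix is the $\ell_2$-norm of its diagonal, so $\Norm{\Tensor{G}_s(:, i_s, :)}{\Frob} = \Norm{\Matrix{C}_s(i_s, :)}{2}$ and the claim follows by maximizing over $i_s$. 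Consequently $\prod_{s=1}^{d} \Norm{\Tensor{G}_s}{\Frob,\infty} = \prod_{s=1}^{d} \Norm{\Matrix{C}_s}{2,\infty}$.

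Taking the infimum of the right-hand side over all CP factorizations of $\Tensor{A}$ yields $\gamma_{\Frob}^{\TT}(\Tensor{A}) \leq \gamma^{\mathrm{CP}}(\Tensor{A})$, and plugging this into \cref{theorem:tt_approx} finishes the argument. There is no genuine obstacle here; the only step that requires care is the bookkeeping of indices in the CP-to-TT conversion and the observation that the diagonal structure of the interior cores makes the Frobenius-to-$\ell_2$ passage an equality rather than an inequality.
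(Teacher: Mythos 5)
Your proposal is correct and follows essentially the same route as the paper: convert a CP factorization into a TT factorization with diagonal interior cores, observe that $\Norm{\Tensor{G}_s}{\Frob,\infty} = \Norm{\Matrix{C}_s}{2,\infty}$, and conclude $\gamma_{\Frob}^{\TT}(\Tensor{A}) \leq \gamma^{\mathrm{CP}}(\Tensor{A})$ before invoking \cref{theorem:tt_approx}. The only difference is that you spell out the infimum step and the norm bookkeeping more explicitly than the paper does.
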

\begin{proof}
    If \cref{eq:cp_format}, then $\Tensor{A} = \TT(\Tensor{G}_1, \ldots, \Tensor{G}_d)$ with TT cores $\Tensor{G}_1(1,i_1,:) = \Matrix{C}_1(i_1,:)$, $\Tensor{G}_d(:,i_d,1) = \Matrix{C}_d(i_d,:)^\trans$, $\Tensor{G}_s(:,i_s,:) = \mathrm{diag}[\Matrix{C}_s(i_s, :)]$. Then $\gamma_{\Frob}^{\TT}(\Tensor{A}) \leq \prod_{s = 1}^{d} \Norm{\Matrix{C}_s}{2,\infty}$.
\end{proof}

\begin{example}
\label{ex:eye_tensors}
Consider an order-$d$ diagonal tensor $\Tensor{I}_n \in \Real^{n \times \cdots \times n}$ with ones along the superdiagonal \cite{kolda2009tensor}. This tensor admits a CP representation with factors $\Matrix{C}_1 = \cdots = \Matrix{C}_d = \Matrix{I}_n \in \Real^{n \times n}$. Then \cref{corollary:cp_approx} guarantees that for any $\varepsilon \in (0,1)$ there exists a tensor $\Tensor{B} \in \Real^{n \times \cdots \times n}$ of TT rank bounded by \cref{eq:tt_rank} such that $\Norm{\Tensor{I}_n - \Tensor{B}_n}{\max} \leq \varepsilon$.
\end{example}

\begin{remark}
    Since the first version of this article appeared, \cref{corollary:cp_approx} has been applied to ``big-data'' tensors sampled from certain analytic functions \cite{budzinskiy2024big}.
\end{remark}

In the second \cref{corollary:tt_approx_coherence}, we consider a TT factorization obtained with the TT-SVD algorithm \cite{oseledets2011tensor} and estimate $\gamma_{\Frob}^{\TT}$ in terms of the unfoldings' spectral norm and the \textit{TT core coherences} \cite{budzinskiy2023tensor}. The latter are a generalization of the coherence used for subspaces and matrices \cite{candes2009exact}. A similar result for matrices appeared in \cite{budzinskiy2024distance}.

\subsection{Outline}
We prove \cref{theorem:tt_approx} in \Cref{sec:main_proof}. In \Cref{sec:core_coherence}, we introduce TT core coherences and prove \cref{corollary:tt_approx_coherence}. A heuristic algorithm of low-rank TT approximation in the maximum norm and the results of numerical experiments are presented in \Cref{sec:numerical}. A discussion in \Cref{sec:dicussion} concludes the article.

\subsection{Notation}
We use $\Matrix{a}$ for vectors, $\Matrix{A}$ for matrices, and $\Tensor{A}$ for tensors. We write $\mathrm{col}(\Matrix{A})$ and $\mathrm{row}(\Matrix{A})$ to denote the column and row spaces of $\Matrix{A}$. The standard basis vectors of $\Real^n$ are denoted by $\{ \Matrix{e}_i \}_{i = 1}^{n}$. The Kronecker product is written as $\otimes$ and the Kronecker delta as $\delta_{\alpha, \beta}$. We use $[n] = \{ 1, \ldots, n \}$ for $n \in \N$. For a tuple of numbers, $(r_1, \ldots, r_{n}) \preccurlyeq r$ means that $r_s \leq r$ for every $s \in [n]$. We denote the expectation of a random variable $\xi$ by $\Expectation \xi$, its $\mathrm{L_p}$-norm by $\Norm{\xi}{\Ell_p} = (\Expectation |\xi|^p)^{1/p}$, its sub-Gaussian norm by $\Norm{\xi}{\psi_2} = \sup_{p \geq 1} \{ \Norm{\xi}{\Ell_p} / \sqrt{p} \}$, and say that $\xi$ is a sub-Gaussian random variable if $\Norm{\xi}{\psi_2} < \infty$ \cite{vershynin2018high}. 
\section{Proof of Theorem~\ref{theorem:tt_approx}}
\label{sec:main_proof}
\subsection{Overview}
\label{sec:proof_overview}
Let $\Tensor{A} \in \Real^{n_1 \times \cdots \times n_d}$ and $r \in \N$ be given by \cref{eq:tt_rank}. It suffices to prove that for any TT factorization $\Tensor{A} = \TT(\Tensor{G}_1, \ldots, \Tensor{G}_d)$ there exists a tensor $\Tensor{B} \in \Real^{n_1 \times \cdots \times n_d}$ of $\ttrank{\Tensor{B}} \preccurlyeq r$ that satisfies
\begin{equation}
\label{eq:tt_bound_fixed_factorization}
    \Norm{\Tensor{A} - \Tensor{B}}{\max} \leq \varepsilon \cdot \prod\nolimits_{s = 1}^{d} \Norm{\Tensor{G}_s}{\Frob,\infty}.    
\end{equation}
Indeed, consider a minimizing sequence $\{ \Tensor{G}_s^{(l)} \}_{s = 1}^{d}$ such that $\Tensor{A} = \TT(\Tensor{G}_1^{(l)}, \ldots, \Tensor{G}_d^{(l)})$ and $\prod_{s = 1}^{d} \Norm{\Matrix{G}_s^{(l)}}{\Frob,\infty} \leq \gamma_{\Frob}^{\TT}(\Tensor{A}) + 2^{-l}$. It holds for the corresponding sequence of approximants $\Tensor{B}^{(l)} \in \Real^{n_1 \times \cdots \times n_d}$ with $\ttrank{\Tensor{B}^{(l)}} \preccurlyeq r$ that
\begin{equation*}
    \Norm{\Tensor{A} - \Tensor{B}^{(l)}}{\max} \leq \varepsilon \cdot \big( \gamma_{\Frob}^{\TT}(\Tensor{A}) + 2^{-l} \big).
\end{equation*}
The set $\set{\Tensor{B}}{\ttrank{\Tensor{B}} \preccurlyeq r}$ is closed\footnote{The $s$th component of $\ttrank{\Tensor{B}_\ast}$ is the rank of the $s$th unfolding matrix of $\Tensor{B}$ \cite{oseledets2011tensor,holtz2012manifolds}, so the set in question is the intersection of $d-1$ closed sets $\set{\Matrix{B}_s \in \Real^{(n_1 \ldots n_s) \times (n_{s+1} \ldots n_d)}}{\rank{\Matrix{B}_s} \leq r}$.}. Since $\{ \Tensor{B}^{(l)} \}$ is bounded, it has a limit point $\Tensor{B}_\ast$ such that $\ttrank{\Tensor{B}_\ast} \preccurlyeq r$ and $\Norm{\Tensor{A} - \Tensor{B}_\ast}{\max} \leq \varepsilon \cdot \gamma_{\Frob}^{\TT}(\Tensor{A})$---as desired.

To address \cref{eq:tt_bound_fixed_factorization}, let the TT rank of the selected factorization be $(k_1, \ldots, k_{d-1})$ and draw random matrices $\{ \Matrix{R}_s \}_{s = 1}^{d-1}$ of size $\Matrix{R}_s \in \Real^{k_s \times r}$ with independent and identically distributed sub-Gaussian entries \cite{vershynin2018high}. We intend to approximate $\Tensor{A}$ as
\begin{equation}
\label{eq:tt_random_embeddings}
    \Tensor{A}(i_1, \ldots, i_d) \approx \Tensor{G}_1(1, i_1, :) \Matrix{R}_1 \Matrix{R}_1^\trans \Tensor{G}_2(:, i_2, :) \Matrix{R}_2 \Matrix{R}_2^\trans \ldots \Matrix{R}_{d-1} \Matrix{R}_{d-1}^\trans \Tensor{G}_d(:, i_d, 1).
\end{equation}
Using concentration inequalities, we will show that for every individual entry of $\Tensor{A}$ the approximation error in \cref{eq:tt_random_embeddings} is bounded by $\varepsilon \prod_{s = 1}^{d} \Norm{\Tensor{G}_s(:, i_s, :)}{\Frob}$ with high probability. A union bound then shows that such error bounds hold for all entries simultaneously with constant probability, hence the sought matrices $\{ \Matrix{R}_s \}_{s = 1}^{d-1}$, and tensor $\Tensor{B}$, exist.

The right-hand side in \cref{eq:tt_random_embeddings}, as a function of random variables, belongs to a family\begin{equation}
\label{eq:quadratic_form}
    \Delta(\Matrix{R}_1, \ldots, \Matrix{R}_{d-1}) = (\Matrix{W}_1 \Matrix{R}_1) (\Matrix{R}_1^\trans \Matrix{W}_2 \Matrix{R}_2) \ldots (\Matrix{R}_{d-2}^\trans \Matrix{W}_{d-1} \Matrix{R}_{d-1}) (\Matrix{R}_{d-1}^\trans \Matrix{W}_{d})
\end{equation}
of quadratic forms, parametrized by $\Matrix{W}_1 \in \Real^{1 \times k_1}$, $\Matrix{W}_s \in \Real^{k_{s-1} \times k_s}$, $\Matrix{W}_d \in \Real^{k_{d-1} \times 1}$. In the rest of this section, we analyze \cref{eq:quadratic_form} in order to prove \cref{eq:tt_bound_fixed_factorization}.

\subsection{Arrays and indices}
Here, we review the notation used in \cite{bamberger2022hanson}. Consider a tuple of dimensions\footnote{We will substitute $N = d-1$ later, but will use $N$ now for generality and to reduce clutter.} $\Pind{m} = (m_1, \ldots, m_N)$. We will extensively work with subsets of dimensions as selected by $\Omega \subseteq [N]$. A map $\Pind{i} : \Omega \to \N$ such that $\Pind{i}_{\omega} \in [m_\omega]$ for every $\omega \in \Omega$ is called a partial index on $\Omega$. We denote the set of all such partial indices by $\PindSet^{\Pind{m}}(\Omega)$. Partial indices on $[N]$ are called indices, and we write $\PindSet^{\Pind{m}} = \PindSet^{\Pind{m}}([N])$.

Maps $\Tensor{C} : \PindSet^{\Pind{m}} \to \Real$ and $\Tensor{D} : \PindSet^{\Pind{m}}(\Omega) \to \Real$ are called an array and a partial array. We denote the set of all arrays by $\Real^{\Pind{m}}$ and of all partial arrays by $\Real^{\Pind{m}}(\Omega)$. We use the same font for tensors and arrays, but write $\Tensor{D}_{\Pind{i}}$ to access the entries of an array.

For $\Pind{i} \in \PindSet^{\Pind{m}}(\Omega)$ and $\Omega' \subset \Omega$, we denote the restriction of $\Pind{i}$ to $\Omega'$ by $\Pind{i}_{\Omega'} \in \PindSet^{\Pind{m}}(\Omega')$ so that $(\Pind{i}_{\Omega'})_{\omega} = \Pind{i}_{\omega}$ for $\omega \in \Omega'$. If $\Omega_1, \Omega_2 \subset [N]$ are disjoint, two partial indices $\Pind{i} \in \PindSet^{\Pind{m}}(\Omega_1)$ and $\Pind{j} \in \PindSet^{\Pind{m}}(\Omega_2)$ can be joined into $\Pind{i} \PindJoin \Pind{j} \in \PindSet^{\Pind{m}}(\Omega_1 \cup \Omega_2)$ so that
\begin{equation*}
    (\Pind{i} \PindJoin \Pind{j})_{\omega} = \begin{cases}
        \Pind{i}_{\omega}, & \omega \in \Omega_1, \\
        \Pind{j}_{\omega}, & \omega \in \Omega_2.
    \end{cases}
\end{equation*}

Nonempty $\{ \Omega_\tau \}_{\tau = 1}^{\kappa} \subseteq \Omega$ are a cover of $\Omega$ if $\Omega = \cup_{\tau = 1}^{\kappa} \Omega_\tau$. If a cover consists of mutually disjoint subsets, we call it a partition of $\Omega$ and say that each $\Omega_\tau$ is a cell. We denote by $\Part_{\kappa}(\Omega)$ the collection of all partitions of $\Omega$ into $\kappa$ cells.

\begin{lemma}
\label{lemma:associative_join}
    Let $\Omega \subseteq [N]$ and $\{ \Omega_\tau \}_{\tau = 1}^{\kappa} \in \Part_{\kappa}(\Omega)$. For every $\Pind{i} \in \PindSet^{\Pind{m}}(\Omega)$, the map $\PindJoin$ is associative and commutative when it acts on $\{ \Pind{i}_{\Omega_\tau} \}_{\tau = 1}^{\kappa}$ and $\Pind{i} = \Pind{i}_{\Omega_1} \PindJoin \cdots \PindJoin \Pind{i}_{\Omega_\kappa}$.
\end{lemma}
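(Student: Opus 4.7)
The lemma is a routine formal verification that the join operation behaves the way one expects on restrictions of a single index to the cells of a partition. The plan is to unfold the definitions and argue by induction on $\kappa$.

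First I would establish the base case $\kappa = 2$. If $\Omega_1, \Omega_2 \subseteq [N]$ are disjoint and $\Pind{i} \in \PindSet^{\Pind{m}}(\Omega_1)$, $\Pind{j} \in \PindSet^{\Pind{m}}(\Omega_2)$, then commutativity $\Pind{i} \PindJoin \Pind{j} = \Pind{j} \PindJoin \Pind{i}$ follows by evaluating both sides at an arbitrary $\omega \in \Omega_1 \cup \Omega_2$: the defining case distinction in $\PindJoin$ is symmetric under swapping the two arguments precisely because $\Omega_1$ and $\Omega_2$ are disjoint, so the two cases never conflict. For associativity with a third pairwise disjoint $\Pind{k} \in \PindSet^{\Pind{m}}(\Omega_3)$, I would check $(\Pind{i} \PindJoin \Pind{j}) \PindJoin \Pind{k} = \Pind{i} \PindJoin (\Pind{j} \PindJoin \Pind{k})$ again by evaluating at any $\omega \in \Omega_1 \cup \Omega_2 \cup \Omega_3$ and observing that $\omega$ lies in exactly one of the three cells.

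Next I would extend to arbitrary $\kappa \geq 2$ by induction: once associativity of three-term joins is in hand, any bracketing of a $\kappa$-fold join of pairwise disjoint partial indices evaluates to the same partial array on $\Omega_1 \cup \cdots \cup \Omega_\kappa$, and any permutation can be realized by iterated adjacent transpositions, each of which is a two-term commutation. This makes $\Pind{i}_{\Omega_1} \PindJoin \cdots \PindJoin \Pind{i}_{\Omega_\kappa}$ well-defined as a partial index on $\Omega_1 \cup \cdots \cup \Omega_\kappa = \Omega$, independently of order and bracketing.

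Finally I would verify the equality $\Pind{i} = \Pind{i}_{\Omega_1} \PindJoin \cdots \PindJoin \Pind{i}_{\Omega_\kappa}$. Pick any $\omega \in \Omega$; since $\{\Omega_\tau\}_{\tau=1}^{\kappa}$ is a partition, there is a unique $\tau_0$ with $\omega \in \Omega_{\tau_0}$. By the definition of $\PindJoin$ applied iteratively and the definition of restriction, the right-hand side evaluated at $\omega$ equals $(\Pind{i}_{\Omega_{\tau_0}})_{\omega} = \Pind{i}_{\omega}$. Since this holds for every $\omega \in \Omega$, the two partial indices coincide.

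There is no real obstacle here beyond bookkeeping; the only thing to be slightly careful about is that the pairwise disjointness of the cells of a partition is exactly what prevents the two branches of the defining $\PindJoin$ case distinction from ever contradicting each other, so every step above uses the partition hypothesis in an essential (but trivial) way.
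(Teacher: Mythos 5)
Your proposal is correct and follows essentially the same route as the paper: verify commutativity and three-term associativity by pointwise evaluation, using disjointness of the cells so the case distinctions never conflict, and extend to general $\kappa$ by induction. Your explicit final check that the iterated join of the restrictions recovers $\Pind{i}$ is a detail the paper leaves implicit in its displayed computation, but it is the same argument.
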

\begin{proof}
    For $\kappa = 2$, $\Pind{i}_{\Omega_1} \PindJoin \Pind{i}_{\Omega_2} = \Pind{i}_{\Omega_2} \PindJoin \Pind{i}_{\Omega_1}$ by definition. For $\kappa = 3$, let $\Pind{j} = (\Pind{i}_{\Omega_1} \PindJoin \Pind{i}_{\Omega_2}) \PindJoin \Pind{i}_{\Omega_3}$. No matter the ordering of the cells, we have
    \begin{equation*}
        \Pind{j}_{\omega} = \begin{cases}
        (\Pind{i}_{\Omega_1} \PindJoin \Pind{i}_{\Omega_2})_{\omega}, & \omega \in \Omega_1 \cup \Omega_2, \\
        (\Pind{i}_{\Omega_3})_{\omega}, & \omega \in \Omega_3,
    \end{cases}
    =
    \begin{cases}
        (\Pind{i}_{\Omega_1})_{\omega}, & \omega \in \Omega_1, \\
        (\Pind{i}_{\Omega_2})_{\omega}, & \omega \in \Omega_2, \\
        (\Pind{i}_{\Omega_3})_{\omega}, & \omega \in \Omega_3.
    \end{cases}
    \end{equation*}
    The rest follows by induction on $\kappa$.
\end{proof}

Let $\Pind{m}^{\times 2} = (m_1, \ldots, m_N, m_1, \ldots, m_N)$. Given $\Omega', \Omega'' \subseteq [N]$, we define
\begin{equation*}
    \Omega' \oplus \Omega'' = \Omega' \cup (\Omega'' + N) \subseteq [2N],
\end{equation*}
where $N$ is added to every element of $\Omega''$. We can concatenate two partial indices $\Pind{i} \in \PindSet^{\Pind{m}}(\Omega')$ and $\Pind{j} \in \PindSet^{\Pind{m}}(\Omega'')$ into $\Pind{i} \PindCat \Pind{j} \in \PindSet^{\Pind{m}^{\times 2}}(\Omega' \oplus \Omega'')$ according to
\begin{equation*}
    (\Pind{i} \PindCat \Pind{j})_{\omega} = \begin{cases}
        \Pind{i}_{\omega}, & \omega \in \Omega', \\
        \Pind{j}_{\omega - N}, & \omega \in \Omega'' + N.
    \end{cases}
\end{equation*}

The arrays corresponding to the quadratic forms \cref{eq:quadratic_form} exhibit more inherent structure than the general arrays studied in \cite{bamberger2022hanson}. To handle this structure, we introduce additional index notation. Consider tuples $\Pind{r} = (r_1, \ldots, r_N)$ and $\Pind{k} = (k_1, \ldots, k_N)$. We define their product as $\PindKron{\Pind{r}}{\Pind{k}} = (r_1 k_1, \ldots, r_N k_N)$. It follows from the definition that $\PindKron{\Pind{r}}{\Pind{k}}^{\times 2} = \PindKron{\Pind{r}^{\times 2}}{\Pind{k}^{\times 2}}$. We define the product of $\Pind{\alpha} \in \PindSet^{\Pind{r}}(\Omega)$ and $\Pind{i} \in \PindSet^{\Pind{k}}(\Omega)$ as $\PindKron{\Pind{\alpha}}{\Pind{i}} \in \PindSet^{\PindKron{\Pind{r}}{\Pind{k}}}(\Omega)$:
\begin{equation*}
    (\PindKron{\Pind{\alpha}}{\Pind{i}})_{\omega} = \Pind{i}_{\omega} + (\Pind{\alpha}_\omega - 1) k_\omega, \quad \omega \in \Omega.
\end{equation*}

\begin{lemma}
\label{lemma:kron_join}
    Let $\Omega_1, \Omega_2 \subset [N]$ be disjoint. For all $\Pind{\alpha} \in \PindSet^{\Pind{r}}(\Omega_1)$, $\Pind{i} \in \PindSet^{\Pind{k}}(\Omega_1)$, $\Pind{\beta} \in \PindSet^{\Pind{r}}(\Omega_2)$, and $\Pind{j} \in \PindSet^{\Pind{k}}(\Omega_2)$, it holds that
    \begin{equation*}
        \PindKron{\Pind{\alpha}}{\Pind{i}} \PindJoin \PindKron{\Pind{\beta}}{\Pind{j}} = \PindKron{(\Pind{\alpha} \PindJoin \Pind{\beta})}{(\Pind{i} \PindJoin \Pind{j})} \in \PindSet^{\PindKron{\Pind{r}}{\Pind{k}}}(\Omega_1 \cup \Omega_2).
    \end{equation*}
\end{lemma}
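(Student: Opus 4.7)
The plan is a direct definition chase. Both sides of the claimed identity are partial indices on the same domain $\Omega_1 \cup \Omega_2$, so the strategy is simply to check componentwise equality at each $\omega \in \Omega_1 \cup \Omega_2$, using the disjointness $\Omega_1 \cap \Omega_2 = \emptyset$ to split into two symmetric cases.

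First I would confirm that both sides belong to $\PindSet^{\PindKron{\Pind{r}}{\Pind{k}}}(\Omega_1 \cup \Omega_2)$. On the left, $\PindKron{\Pind{\alpha}}{\Pind{i}} \in \PindSet^{\PindKron{\Pind{r}}{\Pind{k}}}(\Omega_1)$ and $\PindKron{\Pind{\beta}}{\Pind{j}} \in \PindSet^{\PindKron{\Pind{r}}{\Pind{k}}}(\Omega_2)$ by the definition of the product, and disjointness makes $\PindJoin$ applicable, producing a partial index on $\Omega_1 \cup \Omega_2$ with dimensions $\PindKron{\Pind{r}}{\Pind{k}}$. On the right, $\Pind{\alpha} \PindJoin \Pind{\beta} \in \PindSet^{\Pind{r}}(\Omega_1 \cup \Omega_2)$ and $\Pind{i} \PindJoin \Pind{j} \in \PindSet^{\Pind{k}}(\Omega_1 \cup \Omega_2)$, and their product lands in the same space.

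Next I would unfold the definitions pointwise. Fix $\omega \in \Omega_1 \cup \Omega_2$; by disjointness it lies in exactly one of the two subsets. If $\omega \in \Omega_1$, the definition of $\PindJoin$ gives the left-hand side value $(\PindKron{\Pind{\alpha}}{\Pind{i}})_\omega = \Pind{i}_\omega + (\Pind{\alpha}_\omega - 1) k_\omega$, while the right-hand side unfolds as $(\Pind{i} \PindJoin \Pind{j})_\omega + ((\Pind{\alpha} \PindJoin \Pind{\beta})_\omega - 1)k_\omega = \Pind{i}_\omega + (\Pind{\alpha}_\omega - 1) k_\omega$. The case $\omega \in \Omega_2$ is identical with $\Pind{j}, \Pind{\beta}$ replacing $\Pind{i}, \Pind{\alpha}$. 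Since the two partial indices agree at every component of their common domain, they coincide.

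There is no genuine obstacle here: the result is a bookkeeping identity expressing that the product $\PindKron{\,\cdot\,}{\,\cdot\,}$ is defined componentwise and therefore commutes with the componentwise join. The only thing to watch is keeping track of which dimension tuple ($\Pind{r}$, $\Pind{k}$, or $\PindKron{\Pind{r}}{\Pind{k}}$) each intermediate partial index is parametrized by, which is precisely what the domain check in the first step takes care of.
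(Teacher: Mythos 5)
Your proof is correct and follows essentially the same route as the paper: unfold the definitions of $\PindJoin$ and the product componentwise and split into the two cases $\omega \in \Omega_1$ and $\omega \in \Omega_2$ using disjointness. The added domain check is fine but not needed beyond what the definitions already give.
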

\begin{proof}
Let $\Pind{\ell} = \PindKron{(\Pind{\alpha} \PindJoin \Pind{\beta})}{(\Pind{i} \PindJoin \Pind{j})}$. For every $\omega \in \Omega_1 \cup \Omega_2$, we have
\begin{equation*}
    \Pind{\ell}_{\omega} = (\Pind{i} \PindJoin \Pind{j})_\omega + ((\Pind{\alpha} \PindJoin \Pind{\beta})_{\omega} - 1) k_\omega = \begin{cases}
        \Pind{i}_\omega + (\Pind{\alpha}_\omega - 1) k_\omega = (\PindKron{\Pind{\alpha}}{\Pind{i}})_{\omega}, & \omega \in \Omega_1, \\
        \Pind{j}_\omega + (\Pind{\beta}_\omega - 1) k_\omega = (\PindKron{\Pind{\beta}}{\Pind{j}})_{\omega}, & \omega \in \Omega_2.
    \end{cases}
\end{equation*}
\end{proof}

\begin{lemma}
\label{lemma:kron_cat}
    Let $\Omega', \Omega'' \subseteq [N]$. For all $\Pind{\alpha} \in \PindSet^{\Pind{r}}(\Omega')$, $\Pind{i} \in \PindSet^{\Pind{k}}(\Omega')$, $\Pind{\beta} \in \PindSet^{\Pind{r}}(\Omega'')$, and $\Pind{j} \in \PindSet^{\Pind{k}}(\Omega'')$, it holds that
    \begin{equation*}
        \PindKron{\Pind{\alpha}}{\Pind{i}} \PindCat \PindKron{\Pind{\beta}}{\Pind{j}} = \PindKron{(\Pind{\alpha} \PindCat \Pind{\beta})}{(\Pind{i} \PindCat \Pind{j})} \in \PindSet^{\PindKron{\Pind{r}}{\Pind{k}}^{\times 2}}(\Omega' \oplus \Omega'').
    \end{equation*}
\end{lemma}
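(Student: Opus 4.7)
The plan is to follow exactly the same template as the proof of \cref{lemma:kron_join}: expand both sides coordinatewise, split the domain $\Omega' \oplus \Omega''$ into its two constituent pieces $\Omega'$ and $\Omega'' + N$, and verify that the two sides agree on each piece. This is pure index bookkeeping, so no deeper structural argument is needed.

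First I would record the only nontrivial ingredient: the identity $\PindKron{\Pind{r}}{\Pind{k}}^{\times 2} = \PindKron{\Pind{r}^{\times 2}}{\Pind{k}^{\times 2}}$ noted just before \cref{lemma:kron_join}. This is what makes the right-hand side well-typed, since $\Pind{\alpha} \PindCat \Pind{\beta} \in \PindSet^{\Pind{r}^{\times 2}}(\Omega' \oplus \Omega'')$ and $\Pind{i} \PindCat \Pind{j} \in \PindSet^{\Pind{k}^{\times 2}}(\Omega' \oplus \Omega'')$, so their product lives in $\PindSet^{\PindKron{\Pind{r}^{\times 2}}{\Pind{k}^{\times 2}}}(\Omega' \oplus \Omega'') = \PindSet^{\PindKron{\Pind{r}}{\Pind{k}}^{\times 2}}(\Omega' \oplus \Omega'')$, matching the left-hand side.

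Next I would denote $\Pind{\ell} = \PindKron{(\Pind{\alpha} \PindCat \Pind{\beta})}{(\Pind{i} \PindCat \Pind{j})}$ and unfold its definition: for $\omega \in \Omega' \oplus \Omega''$,
\begin{equation*}
    \Pind{\ell}_\omega = (\Pind{i} \PindCat \Pind{j})_\omega + \bigl( (\Pind{\alpha} \PindCat \Pind{\beta})_\omega - 1 \bigr) \, \Pind{k}^{\times 2}_\omega.
\end{equation*}
I then substitute the definition of $\PindCat$ for each of the three occurrences and split on whether $\omega \in \Omega'$ (in which case $\Pind{k}^{\times 2}_\omega = k_\omega$) or $\omega \in \Omega'' + N$ (in which case $\Pind{k}^{\times 2}_\omega = k_{\omega - N}$). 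In the first case this reduces to $\Pind{i}_\omega + (\Pind{\alpha}_\omega - 1) k_\omega = (\PindKron{\Pind{\alpha}}{\Pind{i}})_\omega$, and in the second case to $\Pind{j}_{\omega - N} + (\Pind{\beta}_{\omega - N} - 1) k_{\omega - N} = (\PindKron{\Pind{\beta}}{\Pind{j}})_{\omega - N}$. By the definition of $\PindCat$ applied on the left-hand side, these are exactly $\bigl( \PindKron{\Pind{\alpha}}{\Pind{i}} \PindCat \PindKron{\Pind{\beta}}{\Pind{j}} \bigr)_\omega$ in the two respective cases, proving the identity.

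There is no real obstacle here; the only place to be careful is keeping track of which $N$-tuple of dimensions governs each partial index, and in particular recognizing that the $\Pind{k}^{\times 2}_\omega$ factor on the right-hand side automatically selects the correct scaling $k_\omega$ or $k_{\omega - N}$ to match the left-hand side. Unlike \cref{lemma:associative_join}, no induction is needed since $\PindCat$ is a binary operation between disjoint pieces with no ambiguity in ordering.
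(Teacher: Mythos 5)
Your proposal is correct and follows essentially the same route as the paper's proof: unfold the definition of $\PindKron{(\Pind{\alpha} \PindCat \Pind{\beta})}{(\Pind{i} \PindCat \Pind{j})}$ entrywise and check the two cases $\omega \in \Omega'$ and $\omega \in \Omega'' + N$, where the dimension factor is $k_\omega$ or $k_{\omega-N}$ respectively. The extra remark that $\PindKron{\Pind{r}}{\Pind{k}}^{\times 2} = \PindKron{\Pind{r}^{\times 2}}{\Pind{k}^{\times 2}}$ makes the right-hand side well-typed is a nice touch the paper leaves implicit.
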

\begin{proof}
Let $\Pind{\ell} = \PindKron{(\Pind{\alpha} \PindCat \Pind{\beta})}{(\Pind{i} \PindCat \Pind{j})}$. For every $\omega \in \Omega'$, we have
\begin{equation*}
    \Pind{\ell}_{\omega} = (\Pind{i} \PindCat \Pind{j})_\omega + ((\Pind{\alpha} \PindCat \Pind{\beta})_{\omega} - 1) k_\omega = \Pind{i}_\omega + (\Pind{\alpha}_\omega - 1) k_\omega = (\PindKron{\Pind{\alpha}}{\Pind{i}})_{\omega},
\end{equation*}
which is equal to $(\PindKron{\Pind{\alpha}}{\Pind{i}} \PindCat \PindKron{\Pind{\beta}}{\Pind{j}})_\omega$. Similarly, for every $\omega \in \Omega'' + N$,
\begin{equation*}
    \Pind{\ell}_{\omega} = (\Pind{i} \PindCat \Pind{j})_\omega + ((\Pind{\alpha} \PindCat \Pind{\beta})_{\omega} - 1) k_{\omega-N} = \Pind{j}_{\omega-N} + (\Pind{\beta}_{\omega-N} - 1) k_{\omega-N} = (\PindKron{\Pind{\beta}}{\Pind{j}})_{\omega - N}.
\end{equation*}
\end{proof}

These index operations are introduced to facilitate access to the entries of structured arrays: $\Pind{i} \PindCat \Pind{j}$ for $\Real^{\Pind{m}^{\times 2}}(\Omega' \oplus \Omega'')$ and $\PindKron{\Pind{\alpha}}{\Pind{i}} \PindCat \PindKron{\Pind{\beta}}{\Pind{j}}$ for $\Real^{\PindKron{\Pind{r}}{\Pind{k}}^{\times 2}}(\Omega' \oplus \Omega'')$.

\subsection{Array transformations}
Let $\Omega' \subseteq \Omega \subseteq [N]$ and $\Tensor{C} \in \Real^{\Pind{m}^{\times 2}}(\Omega \oplus \Omega)$. Following \cite{bamberger2022hanson}, we define\footnote{We use $\langle \cdot \rangle$ instead of $(\cdot)$ \cite{bamberger2022hanson} to avoid possible confusion with the notation for collections.} $\Tensor{C}^{\langle\Omega'\rangle} \in \Real^{\Pind{m}^{\times 2}}((\Omega \setminus \Omega') \oplus (\Omega \setminus \Omega'))$ as
\begin{equation*}
    \Tensor{C}^{\langle\Omega'\rangle}_{\Pind{i} \PindCat \Pind{j}} = \sum\nolimits_{\Pind{\ell} \in \PindSet^{\Pind{m}}(\Omega')} \Tensor{C}_{(\Pind{i} \PindJoin \Pind{\ell}) \PindCat (\Pind{j} \PindJoin \Pind{\ell})}, \quad \Pind{i}, \Pind{j} \in \PindSet^{\Pind{m}}(\Omega \setminus \Omega').
\end{equation*}
We refer to $\Tensor{C}^{\langle\Omega'\rangle}$ as the partial trace: when $\Omega' = \Omega$, the array $\Tensor{C}^{\langle\Omega'\rangle}$ is a real number obtained by summing the ``diagonal'' entries of $\Tensor{C}$.

For $\Tensor{D} \in \Real^{\Pind{m}}(\Omega)$, we define its partial Frobenius norm $\Norm{\Tensor{D}}{\Frob(\Omega')} \in \Real^{\Pind{m}}(\Omega \setminus \Omega')$ as
\begin{equation*}
    (\Norm{\Tensor{D}}{\Frob(\Omega')})_{\Pind{i}} = \Big(\sum\nolimits_{\Pind{j} \in \PindSet^{\Pind{m}}(\Omega')} |\Tensor{D}_{\Pind{i} \PindJoin \Pind{j}}|^2 \Big)^{\frac{1}{2}}, \quad \Pind{i} \in \PindSet^{\Pind{m}}(\Omega \setminus \Omega').
\end{equation*}
When $\Omega' = \Omega$, we write $\Norm{\Tensor{D}}{\Frob} = \Norm{\Tensor{D}}{\Frob(\Omega')}$, which is the Frobenius norm\footnote{The set of partial arrays $\Real^{\Pind{m}}(\Omega)$ is a linear space with operations defined just as for tensors.} on $\Real^{\Pind{m}}(\Omega)$.

\begin{lemma}
\label{lemma:associative_trace_frob}
    Let $\Omega_1, \Omega_2 \subset \Omega \subseteq [N]$ be disjoint. Then
    \begin{enumerate}
        \item $[\Tensor{C}^{\langle\Omega_1\rangle}]^{\langle\Omega_2\rangle} = [\Tensor{C}^{\langle\Omega_2\rangle}]^{\langle\Omega_1\rangle} = \Tensor{C}^{\langle\Omega_1 \cup \Omega_2\rangle}$ for all $\Tensor{C} \in \Real^{\Pind{m}^{\times 2}}(\Omega \oplus \Omega)$,
        \item $\Norm{\Norm{\Tensor{D}}{\Frob(\Omega_1)}}{\Frob(\Omega_2)} = \Norm{\Norm{\Tensor{D}}{\Frob(\Omega_2)}}{\Frob(\Omega_1)} = \Norm{\Tensor{D}}{\Frob(\Omega_1 \cup \Omega_2)}$ for all $\Tensor{D} \in \Real^{\Pind{m}}(\Omega)$.
    \end{enumerate}
\end{lemma}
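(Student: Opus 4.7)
Both assertions reduce to reordering and regrouping finite sums, and the only conceptual ingredient is a bijection between partial indices on a union and the product of partial indices on the cells.

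\textbf{A preliminary bijection.} The plan is to first observe that for disjoint $\Omega_1, \Omega_2 \subseteq [N]$ the map
\begin{equation*}
    \PindJoin : \PindSet^{\Pind{m}}(\Omega_1) \times \PindSet^{\Pind{m}}(\Omega_2) \to \PindSet^{\Pind{m}}(\Omega_1 \cup \Omega_2), \qquad (\Pind{\ell}_1, \Pind{\ell}_2) \mapsto \Pind{\ell}_1 \PindJoin \Pind{\ell}_2,
\end{equation*}
is a bijection: injectivity follows from the definition of $\PindJoin$ (each factor is recovered by restriction to its cell), while surjectivity is the $\kappa=2$ case of \cref{lemma:associative_join} applied to $\{ \Omega_1, \Omega_2 \} \in \Part_2(\Omega_1 \cup \Omega_2)$. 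This is the tool that allows me to split or merge a single sum over $\PindSet^{\Pind{m}}(\Omega_1 \cup \Omega_2)$ into an iterated double sum.

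\textbf{Iterated partial traces.} For the first identity, take $\Pind{i}, \Pind{j} \in \PindSet^{\Pind{m}}(\Omega \setminus (\Omega_1 \cup \Omega_2))$ and expand by definition,
\begin{equation*}
    [\Tensor{C}^{\langle\Omega_1\rangle}]^{\langle\Omega_2\rangle}_{\Pind{i} \PindCat \Pind{j}} = \sum_{\Pind{\ell}_2 \in \PindSet^{\Pind{m}}(\Omega_2)} \sum_{\Pind{\ell}_1 \in \PindSet^{\Pind{m}}(\Omega_1)} \Tensor{C}_{((\Pind{i} \PindJoin \Pind{\ell}_2) \PindJoin \Pind{\ell}_1) \PindCat ((\Pind{j} \PindJoin \Pind{\ell}_2) \PindJoin \Pind{\ell}_1)}.
\end{equation*}
Using the associativity and commutativity of $\PindJoin$ supplied by \cref{lemma:associative_join}, the argument becomes $(\Pind{i} \PindJoin (\Pind{\ell}_1 \PindJoin \Pind{\ell}_2)) \PindCat (\Pind{j} \PindJoin (\Pind{\ell}_1 \PindJoin \Pind{\ell}_2))$. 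The bijection above lets me replace the double sum over $(\Pind{\ell}_1, \Pind{\ell}_2)$ by a single sum over $\Pind{\ell} \in \PindSet^{\Pind{m}}(\Omega_1 \cup \Omega_2)$, yielding exactly $\Tensor{C}^{\langle\Omega_1 \cup \Omega_2\rangle}_{\Pind{i} \PindCat \Pind{j}}$. Swapping the roles of $\Omega_1$ and $\Omega_2$ gives the other equality.

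\textbf{Iterated partial Frobenius norms.} For the second identity I square both sides to reduce to an equality of finite sums of squares. Unfolding the definition for $\Pind{i} \in \PindSet^{\Pind{m}}(\Omega \setminus (\Omega_1 \cup \Omega_2))$,
\begin{equation*}
    \bigl(\Norm{\Norm{\Tensor{D}}{\Frob(\Omega_1)}}{\Frob(\Omega_2)}\bigr)_{\Pind{i}}^{2} = \sum_{\Pind{j}_2 \in \PindSet^{\Pind{m}}(\Omega_2)} \sum_{\Pind{j}_1 \in \PindSet^{\Pind{m}}(\Omega_1)} \bigl| \Tensor{D}_{(\Pind{i} \PindJoin \Pind{j}_2) \PindJoin \Pind{j}_1} \bigr|^{2},
\end{equation*}
and once again \cref{lemma:associative_join} lets me rewrite the argument as $\Pind{i} \PindJoin (\Pind{j}_1 \PindJoin \Pind{j}_2)$, after which the bijection collapses the two inner sums into a single sum over $\PindSet^{\Pind{m}}(\Omega_1 \cup \Omega_2)$, which is precisely $(\Norm{\Tensor{D}}{\Frob(\Omega_1 \cup \Omega_2)})_{\Pind{i}}^{2}$. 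Taking square roots and swapping $\Omega_1 \leftrightarrow \Omega_2$ closes the proof. There is no real obstacle here; the whole argument is a careful application of Fubini for finite sums, with the bookkeeping between $\PindJoin$, restriction, and the disjoint decomposition being the only thing to track.
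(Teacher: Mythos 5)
Your proposal is correct and follows essentially the same route as the paper: expand the iterated partial trace by definition, invoke \cref{lemma:associative_join} to regroup the joins, and merge the double sum over $\PindSet^{\Pind{m}}(\Omega_1)\times\PindSet^{\Pind{m}}(\Omega_2)$ into a single sum over $\PindSet^{\Pind{m}}(\Omega_1\cup\Omega_2)$, with the Frobenius statement handled by the same computation on squared entries. The only difference is presentational — you make the join bijection explicit and write out the second part, which the paper leaves as ``similar.''
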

\begin{proof}
Denote $\Tensor{\widehat{C}} = \Tensor{C}^{\langle\Omega_1\rangle}$. For any $\Pind{i}, \Pind{j} \in \PindSet^{\Pind{m}}((\Omega \setminus \Omega_1) \setminus \Omega_2) = \PindSet^{\Pind{m}}(\Omega \setminus (\Omega_1 \cup \Omega_2))$, 
\begin{align*}
    \Tensor{\widehat{C}}^{\langle\Omega_2\rangle}_{\Pind{i} \PindCat \Pind{j}} &= \sum_{\Pind{\ell} \in \PindSet^{\Pind{m}}(\Omega_2)} \widehat{\Tensor{C}}_{(\Pind{i} \PindJoin \Pind{\ell}) \PindCat (\Pind{j} \PindJoin \Pind{\ell})} = \sum_{\Pind{\ell} \in \PindSet^{\Pind{m}}(\Omega_2), \Pind{\ell'} \in \PindSet^{\Pind{m}}(\Omega_1)} \Tensor{C}_{(\Pind{i} \PindJoin \Pind{\ell} \PindJoin \Pind{\ell'}) \PindCat (\Pind{j} \PindJoin \Pind{\ell} \PindJoin \Pind{\ell'})} \\
    &= \sum_{\Pind{\ell''} \in \PindSet^{\Pind{m}}(\Omega_1 \cup \Omega_2)} \Tensor{C}_{(\Pind{i} \PindJoin \Pind{\ell''}) \PindCat (\Pind{j} \PindJoin \Pind{\ell''})} = \Tensor{C}^{\langle\Omega_1 \cup \Omega_2\rangle}_{\Pind{i} \PindCat \Pind{j}}.
\end{align*}
We used \cref{lemma:associative_join} in the second and third equalities. Reversing the roles of $\Omega_1$ and $\Omega_2$, we prove the first part. The proof of the second part is similar.
\end{proof}

Given $\Tensor{E} \in \Real^{\PindKron{\Pind{r}}{\Pind{k}}}(\Omega)$ and $\Pind{\alpha} \in \PindSet^{\Pind{r}}(\Omega)$, we can extract a subarray $\Tensor{E}^{\{ \Pind{\alpha} \}} \in \Real^{\Pind{k}}(\Omega)$ such that $\Tensor{E}^{\{ \Pind{\alpha} \}}_{\Pind{i}} = \Tensor{E}_{\PindKron{\Pind{\alpha}}{\Pind{i}}}$ for $\Pind{i} \in \PindSet^{\Pind{k}}(\Omega)$. By \cref{lemma:kron_cat}, if $\Tensor{F} \in \Real^{\PindKron{\Pind{r}}{\Pind{k}}^{\times 2}}(\Omega' \oplus \Omega'')$, then $\Tensor{F}^{\{ \Pind{\alpha} \PindCat \Pind{\beta} \}} \in \Real^{\Pind{k}^{\times 2}}(\Omega' \oplus \Omega'')$ and $\Tensor{F}^{\{ \Pind{\alpha} \PindCat \Pind{\beta} \}}_{\Pind{i} \PindCat \Pind{j}} = \Tensor{F}_{\PindKron{\Pind{\alpha}}{\Pind{i}} \PindCat \PindKron{\Pind{\beta}}{\Pind{j}}}$ for $\Pind{\alpha} \in \PindSet^{\Pind{r}}(\Omega')$ and $\Pind{\beta} \in \PindSet^{\Pind{r}}(\Omega'')$. In addition, if $\Tensor{G} \in \Real^{\PindKron{\Pind{r}}{\Pind{k}}^{\times 2}}(\mho)$ for $\mho \subseteq \Omega' \oplus \Omega''$, we shall write $\Tensor{G}^{\{ \Pind{\alpha} \PindCat \Pind{\beta} \}_{\mho}} = \Tensor{G}^{\{ (\Pind{\alpha} \PindCat \Pind{\beta})_{\mho} \}}$ to extract a subarray of $\Tensor{G}$ based on the partial index $\Pind{\alpha} \PindCat \Pind{\beta}$.

\subsection{Hierarchy of norms}

We can associate a norm on $\Real^{\Pind{m}}(\Omega)$ with every partition of dimensions $\pi \in \Part_{\kappa}(\Omega)$. Define
\begin{equation*}
    \Norm{\Tensor{C}}{\pi} = \sup \set[\Bigg]{\sum_{\Pind{i} \in \PindSet^{\Pind{m}}(\Omega)} \Tensor{C}_{\Pind{i}} \prod_{\tau = 1}^{\kappa} \Tensor{Z}^{(\tau)}_{\Pind{i}_{\Omega_\tau}}}{\Tensor{Z}^{(\tau)} \in \Real^{\Pind{m}}(\Omega_\tau),~\Norm{\Tensor{Z}^{(\tau)}}{\Frob} = 1}.
\end{equation*}
This family of norms can be partially ordered according to the partition lattice based on whether one partition is a refinement of another \cite{wang2017operator}. For instance,
\begin{equation}
\label{eq:norm_hierarchy}
    \max_{\kappa \in [|\Omega|]} \max_{\pi \in \Part_{\kappa}(\Omega)} \Norm{\Tensor{C}}{\pi} = \Norm{\Tensor{C}}{\{ \Omega \}} = \Norm{\Tensor{C}}{\Frob}.
\end{equation}

\subsection{Tensor-structured Hanson--Wright inequality}
The main result of \cite{bamberger2022hanson}, and the main instrument we need to prove \cref{theorem:tt_approx}, is an upper bound on the moments of a quadratic form in sub-Gaussian random variables. The following theorem is a slight modification of \cite[Theorem~3]{bamberger2022hanson}. We write $\Compl{\Omega} = [N] \setminus \Omega$ for $\Omega \subseteq [N]$.

\begin{theorem}
\label{theorem:moments_bound}
    Let $N \geq 1$, $\Pind{m} \in \N^N$, and $\Tensor{C} \in \Real^{\Pind{m}}$. Consider a quadratic form $f_{\Tensor{C}} : \Real^{m_1} \times \cdots \times \Real^{m_N} \to \Real$ defined by
    \begin{equation*}
        f_{\Tensor{C}}(\Matrix{x}_1, \ldots, \Matrix{x}_N) = \sum_{\Pind{i}, \Pind{j} \in \PindSet^{\Pind{m}}} \Tensor{C}_{\Pind{i} \PindCat \Pind{j}} \prod_{s = 1}^{N} \Matrix{x}_s(\Pind{i}_s) \Matrix{x}_s(\Pind{j}_s).
    \end{equation*}
    Let $\{ \Matrix{x}_s \}_{s = 1}^{N}$ be random vectors with independent sub-Gaussian entries satisfying
    \begin{equation*}
        \Expectation \Matrix{x}_s(\Pind{i}_s) = 0, \quad \Expectation |\Matrix{x}_s(\Pind{i}_s)|^2 = \eta_s^2, \quad \Norm{\Matrix{x}_s(\Pind{i}_s)}{\psi_2} \leq \eta_s L
    \end{equation*}
    for some constants $\eta_s > 0$ and $L \geq 1$ and all $\Pind{i} \in \PindSet^{\Pind{m}}$. Then there exists an absolute constant\footnote{The value of $\mathring{c}_{N}$ can be traced to \cite{latala2006estimates} from where we can deduce that it grows at least as $N^N$.} $\mathring{c}_{N} > 0$ that depends only on $N$ such that for all $p \geq 1$ it holds that
    \begin{equation*}
        \Norm{f_{\Tensor{C}} - \Expectation f_{\Tensor{C}}}{\Ell_p} \leq \mathring{c}_{N} \prod_{s = 1}^{N} (\eta_s L)^2 \sum_{\varnothing \neq \Omega \subseteq [N]} \sum_{\kappa = 1}^{2|\Omega|} p^{\frac{\kappa}{2}} \sum_{\pi \in \Part_{\kappa} (\Omega \oplus \Omega)} \Norm{\Tensor{C}^{\langle\Compl{\Omega}\rangle}}{\pi}.
    \end{equation*}
\end{theorem}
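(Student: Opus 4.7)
The plan is to deduce the estimate from \cite[Theorem~3]{bamberger2022hanson} by a rescaling argument, since the statement announces itself as a slight modification and the only apparent differences are that we allow general variances $\eta_s$ and an arbitrary sub-Gaussian parameter $L \geq 1$.

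First I would standardize the random vectors. Set $\Matrix{y}_s = \Matrix{x}_s / \eta_s$, so that each $\Matrix{y}_s$ has independent centered entries of unit variance with $\Norm{\Matrix{y}_s(\Pind{i}_s)}{\psi_2} \leq L$. Since $f_{\Tensor{C}}$ is homogeneous of degree two in each argument,
\begin{equation*}
    f_{\Tensor{C}}(\Matrix{x}_1, \ldots, \Matrix{x}_N) = \Big(\prod\nolimits_{s=1}^{N} \eta_s^2\Big) f_{\Tensor{C}}(\Matrix{y}_1, \ldots, \Matrix{y}_N),
\end{equation*}
and the same scaling relates the centred $\Ell_p$ moments on both sides. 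Applying the standardized Bamberger--Krahmer bound to $f_{\Tensor{C}}(\Matrix{y})$ produces an additional factor of $L^{2N}$ in front of the sum of partition norms of $\Tensor{C}^{\langle\Compl{\Omega}\rangle}$; multiplying by $\prod_s \eta_s^2$ yields the stated $\prod_s (\eta_s L)^2$.

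Second, I would reconcile the form of the right-hand side. The cited theorem expresses its bound as a sum over subsets of active coordinates and partitions of a doubled index set, which coincides with our outer sum up to a possible relabelling $\Omega \leftrightarrow \Compl{\Omega}$ in the summation. The exclusion $\Omega \neq \varnothing$ is necessary because the missing term would correspond to the fully-traced scalar $\Tensor{C}^{\langle [N]\rangle}$, whose contribution is precisely $\prod_s \eta_s^{-2} \cdot \Expectation f_{\Tensor{C}}$ and has already been removed by the centring on the left. No analytic work is required at this stage; only bookkeeping of the index sets, which is facilitated by the associativity of the partial trace established in \cref{lemma:associative_trace_frob}.

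The genuinely hard part, which I would assume from \cite{bamberger2022hanson} rather than reprove, is the underlying tensor-structured concentration inequality in the standardized regime. Its proof proceeds by an induction on $N$ in which each step decouples one coordinate via a symmetrization inequality, reduces the remaining form to a chaos of lower order, and invokes Latala-type moment estimates for homogeneous Gaussian chaos together with a $\psi_2$ transfer from sub-Gaussian to Gaussian variables. The main obstacle --- and the source of the constant $\mathring{c}_N$ --- lies in controlling the combinatorial growth of this recursion across all partitions $\pi \in \Part_\kappa(\Omega \oplus \Omega)$, which is what ultimately forces the at-least-$N^N$ behaviour acknowledged in the footnote.
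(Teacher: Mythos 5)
Your proposal is correct and matches the paper's proof: both rescale $\Matrix{y}_s = \Matrix{x}_s/\eta_s$ to unit variance, invoke \cite[Theorem~3]{bamberger2022hanson} as a black box, use the degree-two homogeneity of $f_{\Tensor{C}}$ in each argument to recover the factor $\prod_{s}(\eta_s L)^2$, and account for the relabelling of $\Omega$ and $\Compl{\Omega}$ in the summation. No gaps; your extra remark on the $\Omega = \varnothing$ term being absorbed by the centring is consistent with the paper's (terser) argument.
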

\begin{proof}
    Consider scaled random vectors $\{ \Matrix{y}_s \}_{s = 1}^{N}$ defined as $\Matrix{y}_s = \frac{1}{\eta_s} \Matrix{x}_s$. The variance of each of their entries is equal to one, so we can apply \cite[Theorem~3]{bamberger2022hanson} to the quadratic form $f_{\Tensor{C}}$ in the $\Matrix{y}$-variables and note that the function $f_{\Tensor{C}}$ is homogeneous of order two in each variable. We also reverse the roles of $\Omega$ and $\Compl{\Omega}$.
\end{proof}

To apply \cref{theorem:moments_bound} to a specific quadratic form, one needs to write down the array $\Tensor{C}$ of the quadratic form and derive an upper bound on $\Norm{\Tensor{C}^{(\Compl{\Omega})}}{\pi}$.

\subsection{Technical lemmas}

Before we turn to the analysis of the quadratic form \cref{eq:quadratic_form}, we prove two lemmas that describe a variant of the Cauchy--Schwarz inequality.

\begin{lemma}
\label{lemma:cauchy_schwarz}
Let $\varnothing \neq \Omega \subseteq [N]$ and $\pi = \{ \mho_\tau \}_{\tau = 1}^{\kappa} \in \Part_\kappa (\Omega \oplus \Omega)$. For $\omega \in \Omega$, define 
    \begin{equation*}
        \theta_{\pi, \omega} = \begin{cases}
            1/2 & \text{if}~\omega~\text{and}~\omega+N~\text{belong to the same cell of}~\pi, \\
            0   & \text{otherwise}. \\
        \end{cases}
    \end{equation*} 
    For every $\tau \in [\kappa]$, let $\Omega_\tau = (\mho_\tau \cup (\mho_\tau - N)) \cap \Omega$. Then for every $\Pind{m} \in \N^N$ and every collection of partial arrays $\{ \Tensor{C}^{(\tau)} \}_{\tau = 1}^{\kappa}$ such that $\Tensor{C}^{(\tau)} \in \Real^{\Pind{m}}(\Omega_\tau)$ it holds that
    \begin{equation*}
        \sum_{\Pind{i} \in \PindSet^{\Pind{m}}(\Omega)} \prod_{\tau \in [\kappa]} \Tensor{C}^{(\tau)}_{\Pind{i}_{\Omega_\tau}} \leq \prod_{\omega \in \Omega} m_\omega^{\theta_{\pi, \omega}} \prod_{\tau \in [\kappa]} \Norm{\Tensor{C}^{(\tau)}}{\Frob}.
    \end{equation*}
\end{lemma}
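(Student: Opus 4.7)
The plan is to partition $\Omega$ according to the value of $\theta_{\pi,\omega}$ and handle the two pieces in sequence. Let $\Omega^{(1)} = \{\omega \in \Omega : \theta_{\pi,\omega} = 1/2\}$, $\Omega^{(2)} = \Omega \setminus \Omega^{(1)}$, and $\Omega_\tau^{(j)} = \Omega_\tau \cap \Omega^{(j)}$ for $j = 1,2$. Since $\pi$ is a partition of $\Omega \oplus \Omega$, a direct case check shows that each $\omega \in \Omega^{(1)}$ belongs to exactly one of the $\Omega_\tau$ (the one coming from the cell of $\pi$ that contains both $\omega$ and $\omega+N$), while each $\omega \in \Omega^{(2)}$ belongs to exactly two distinct $\Omega_\tau$'s. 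In particular $\{\Omega_\tau^{(1)}\}_\tau$ is a pairwise disjoint family covering $\Omega^{(1)}$, while the cell-incidence of every $\omega \in \Omega^{(2)}$ is exactly two.

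The first step is to sum out $\Pind{i}_{\Omega^{(1)}}$ by the scalar Cauchy--Schwarz inequality cell by cell. Using the disjointness above, $\sum_{\Pind{i}_{\Omega^{(1)}}} \prod_\tau \Tensor{C}^{(\tau)}_{\Pind{i}_{\Omega_\tau}}$ factorizes across $\tau$, and a scalar Cauchy--Schwarz inside each cell (against the constant one) produces the prefactor $\prod_\tau (\prod_{\omega \in \Omega_\tau^{(1)}} m_\omega)^{1/2} = \prod_{\omega \in \Omega} m_\omega^{\theta_{\pi,\omega}}$ and replaces $\Tensor{C}^{(\tau)}$ by the partial array $B_\tau := \Norm{\Tensor{C}^{(\tau)}}{\Frob(\Omega_\tau^{(1)})} \in \Real^{\Pind{m}}(\Omega_\tau^{(2)})$. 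By \cref{lemma:associative_trace_frob}(2), $\Norm{B_\tau}{\Frob} = \Norm{\Tensor{C}^{(\tau)}}{\Frob}$, so it remains to show
\begin{equation*}
    \sum_{\Pind{i} \in \PindSet^{\Pind{m}}(\Omega^{(2)})} \prod_{\tau \in [\kappa]} B_{\tau, \Pind{i}_{\Omega_\tau^{(2)}}} \leq \prod_{\tau \in [\kappa]} \Norm{B_\tau}{\Frob}.
\end{equation*}

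The second step handles this residual inequality by induction on $|\Omega^{(2)}|$. The base case $|\Omega^{(2)}| = 0$ is immediate: each $B_\tau$ is a scalar equal to $\Norm{\Tensor{C}^{(\tau)}}{\Frob}$. For the inductive step, pick any $\omega_0 \in \Omega^{(2)}$; by the two-cell incidence it belongs to exactly two of the sets $\Omega_\tau^{(2)}$, say $\Omega_{\tau_1}^{(2)}$ and $\Omega_{\tau_2}^{(2)}$ with $\tau_1 \neq \tau_2$. For every fixed assignment of the remaining indices, applying the scalar Cauchy--Schwarz to the one-variable sum $\sum_{\Pind{i}_{\omega_0}} B_{\tau_1} B_{\tau_2}$ separates the two arrays and replaces each by its partial Frobenius norm along $\{\omega_0\}$; the full Frobenius norm is preserved by \cref{lemma:associative_trace_frob}(2). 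The resulting residual sum has the same form but over $\Omega^{(2)} \setminus \{\omega_0\}$, and the inductive hypothesis closes the argument.

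The main obstacle is the index bookkeeping in the inductive step: one has to check that after the reduction at $\omega_0$ every remaining $\omega \in \Omega^{(2)}$ still sits in exactly two of the modified cell index sets, that the Cauchy--Schwarz at $\omega_0$ is uniform in the other indices so it may be commuted past the outer sum, and that nested partial Frobenius norms compose into the intended global norm. All three points reduce to the commutativity and associativity statements in \cref{lemma:associative_join,lemma:associative_trace_frob}, which is why these lemmas are invoked at every reduction step.
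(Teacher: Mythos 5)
Your proof is correct, and it rests on the same two ingredients as the paper's: a coordinate-wise Cauchy--Schwarz estimate and the composition rule for partial Frobenius norms (\cref{lemma:associative_trace_frob}). The organization differs mildly: the paper runs a single induction over $|\Omega|$, peeling off one $\omega$ at a time and splitting into the cases $\theta_{\pi,\omega}=1/2$ and $\theta_{\pi,\omega}=0$ inside each inductive step (which forces the bookkeeping of how the cell count of the reduced partition changes), whereas you first dispose of all paired coordinates $\Omega^{(1)}$ in one non-inductive sweep --- exploiting that the sets $\Omega_\tau^{(1)}$ are pairwise disjoint, so the sum over $\Pind{i}_{\Omega^{(1)}}$ factorizes across cells --- and then induct only over the unpaired coordinates, where every reduction is a clean two-array Cauchy--Schwarz. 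That reorganization buys a slightly more transparent origin of the factor $\prod_{\omega\in\Omega} m_\omega^{\theta_{\pi,\omega}}$ and removes the paper's case distinction on whether cells become empty; it does not change the substance. One small shared caveat: in the first phase the factors $\sum_{\Pind{i}_{\Omega_\tau^{(1)}}}\Tensor{C}^{(\tau)}$ may be negative, so bounding each factor separately only controls the product after passing to absolute values; you should state the standard reduction that it suffices to prove the bound with each $\Tensor{C}^{(\tau)}$ replaced by its entrywise absolute value (which leaves all Frobenius norms unchanged). The paper's own inductive step is equally implicit on this point, and after that one-line remark your argument is complete.
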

\begin{proof}
    We will prove the lemma by induction over the cardinality $|\Omega|$. For the base case, let $\Omega = \{ \omega \}$ be a singleton. There are two partitions of $\Omega \oplus \Omega$:
    \begin{equation*}
        \pi_1 = \Big\{ \big\{ \omega, \omega+N \big\} \Big\}, \quad \pi_2 = \Big\{ \big\{ \omega \big\}, \big\{ \omega + N \big\} \Big\}.
    \end{equation*}
    Fix $\pi_1$. Then $\kappa = 1$ and $\Omega_1 = \Omega = \{ \omega \}$. By the Cauchy--Schwarz inequality,
    \begin{equation*}
        \sum_{\Pind{i} \in \PindSet^{\Pind{m}}(\Omega)} \prod_{\tau \in [\kappa]} \Tensor{C}^{(\tau)}_{\Pind{i}_{\Omega_\tau}} = \sum_{\Pind{i} \in \PindSet^{\Pind{m}}(\Omega)} \Tensor{C}^{(1)}_{\Pind{i}} \leq \sqrt{m_\omega} \cdot \Norm{\Tensor{C}^{(1)}}{\Frob} = m_\omega^{\theta_{\pi_1, \omega}} \cdot \Norm{\Tensor{C}^{(1)}}{\Frob}.
    \end{equation*}
    Now, fix $\pi_2$. Then $\kappa = 2$ and $\Omega_1 = \Omega_2 = \Omega = \{ \omega \}$. Similarly, we obtain
    \begin{equation*}
        \sum_{\Pind{i} \in \PindSet^{\Pind{m}}(\Omega)} \prod_{\tau \in [\kappa]} \Tensor{C}^{(\tau)}_{\Pind{i}_{\Omega_\tau}} = \sum_{\Pind{i} \in \PindSet^{\Pind{m}}(\Omega)} \Tensor{C}^{(1)}_{\Pind{i}} \Tensor{C}^{(2)}_{\Pind{i}} \leq \Norm{\Tensor{C}^{(1)}}{\Frob} \Norm{\Tensor{C}^{(2)}}{\Frob} = m_\omega^{\theta_{\pi_2, \omega}} \cdot \Norm{\Tensor{C}^{(1)}}{\Frob} \Norm{\Tensor{C}^{(2)}}{\Frob}.
    \end{equation*}

    Next, let $|\Omega| \geq 2$ and assume that the lemma holds for all subsets of at most $|\Omega| - 1$ dimensions. Fix $\pi = \{ \mho_\tau \}_{\tau = 1}^{\kappa} \in \Part_\kappa (\Omega \oplus \Omega)$. Let $\omega \in \Omega$ be such that $\theta_{\pi, \omega} = 1/2$, that is, $\omega$ lies in exactly one $\Omega_t = (\mho_t \cup (\mho_t - N)) \cap \Omega$. Consider
    \begin{equation*}
        \sum_{\Pind{i} \in \PindSet^{\Pind{m}}(\Omega)} \prod_{\tau \in [\kappa]} \Tensor{C}^{(\tau)}_{\Pind{i}_{\Omega_\tau}} = \sum_{\Pind{j} \in \PindSet^{\Pind{m}}(\Omega \setminus \{ \omega \})} \bigg( \sum_{\Pind{\ell} \in \PindSet^{\Pind{m}}(\{ \omega \})} \Tensor{C}^{(t)}_{\Pind{\ell} \PindJoin \Pind{j}_{\Omega_t}} \bigg) \prod_{\tau \neq t} \Tensor{C}^{(\tau)}_{\Pind{j}_{\Omega_\tau}}.
    \end{equation*}
    Let $\Tensor{D} = \Norm{\Tensor{C}^{(t)}}{\Frob(\{\omega\})} \in \Real^{\Pind{m}}(\Omega_t \setminus \{\omega\})$. For every fixed $\Pind{j}$, the Frobenius norm of the partial array $\Pind{\ell} \mapsto \Tensor{C}^{(t)}_{\Pind{\ell} \PindJoin \Pind{j}_{\Omega_t}}$ is equal to $\Tensor{D}_{\Pind{j}_{\Omega_t}}$. Then, by the Cauchy--Schwarz inequality, 
    \begin{equation*}
        \sum_{\Pind{j} \in \PindSet^{\Pind{m}}(\Omega \setminus \{ \omega \})} \bigg( \sum_{\Pind{\ell} \in \PindSet^{\Pind{m}}(\{ \omega \})} \Tensor{C}^{(t)}_{\Pind{\ell} \PindJoin \Pind{j}_{\Omega_t}} \bigg) \prod_{\tau \neq t} \Tensor{C}^{(\tau)}_{\Pind{j}_{\Omega_\tau}} \leq \sum_{\Pind{j} \in \PindSet^{\Pind{m}}(\Omega \setminus \{ \omega \})} \big( \sqrt{m_\omega} \cdot \Tensor{D}_{\Pind{j}_{\Omega_t}} \big) \prod_{\tau \neq t} \Tensor{C}^{(\tau)}_{\Pind{j}_{\Omega_\tau}}.
    \end{equation*}
    This removes the presence of $\omega$ and reduces the problem to the case of $\Omega' = \Omega \setminus \{ \omega \}$. We can modify $\pi$ to construct a new partition
    \begin{equation*}
        \pi' = (\pi \setminus \mho_t) \cup (\mho_t \setminus \{ \omega, \omega+N \}) \in \begin{cases}
            \Part_{\kappa - 1}(\Omega' \oplus \Omega'), & \mho_t \setminus \big\{ \omega, \omega+N \big\} = \varnothing, \\
            \Part_{\kappa}(\Omega' \oplus \Omega'), & \mho_t \setminus \big\{ \omega, \omega+N \big\} \neq \varnothing.
        \end{cases}
    \end{equation*}
    Note that $\theta_{\pi,\omega'} = \theta_{\pi', \omega'}$ for $\omega' \in \Omega'$. By the induction hypothesis and \cref{lemma:associative_trace_frob},
    \begin{align*}
        \sum_{\Pind{i} \in \PindSet^{\Pind{m}}(\Omega)} \prod_{\tau \in [\kappa]} \Tensor{C}^{(\tau)}_{\Pind{i}_{\Omega_\tau}} &\leq m_\omega^{\theta_{\pi, \omega}} \sum_{\Pind{j} \in \PindSet^{\Pind{m}}(\Omega \setminus \{ \omega \})} \Tensor{D}_{\Pind{j}_{\Omega_t}} \prod_{\tau \neq t} \Tensor{C}^{(\tau)}_{\Pind{j}_{\Omega_\tau}} \\
        &\leq m_\omega^{\theta_{\pi, \omega}} \prod_{\omega' \in \Omega'} m_{\omega'}^{\theta_{\pi', \omega'}} \Norm{\Tensor{D}}{\Frob} \prod_{\tau \neq t} \Norm{\Tensor{C}^{(\tau)}}{\Frob} = \prod_{\omega \in \Omega} m_\omega^{\theta_{\pi, \omega}} \prod_{\tau \in [\kappa]} \Norm{\Tensor{C}^{(\tau)}}{\Frob}.
    \end{align*}

    Suppose now that $\theta_{\pi, \omega} = 0$, that is, $\omega$ lies in two distinct $\Omega_{t}, \Omega_{q}$. Then a similar argument, with $\Tensor{D} = \Norm{\Tensor{C}^{(t)}}{\Frob(\{\omega\})}$ and $\Tensor{E} = \Norm{\Tensor{C}^{(q)}}{\Frob(\{\omega\})}$, leads us to
    \begin{align*}
        \sum_{\Pind{i} \in \PindSet^{\Pind{m}}(\Omega)} \prod_{\tau \in [\kappa]} \Tensor{C}^{(\tau)}_{\Pind{i}_{\Omega_\tau}} &= \sum_{\Pind{j} \in \PindSet^{\Pind{m}}(\Omega \setminus \{ \omega \})} \bigg( \sum_{\Pind{\ell} \in \PindSet^{\Pind{m}}(\{ \omega \})} \Tensor{C}^{(t)}_{\Pind{\ell} \PindJoin \Pind{j}_{\Omega_t}} \Tensor{C}^{(q)}_{\Pind{\ell} \PindJoin \Pind{j}_{\Omega_q}} \bigg) \prod_{\tau \neq t,q} \Tensor{C}^{(\tau)}_{\Pind{j}_{\Omega_\tau}} \\
        &\leq \sum_{\Pind{j} \in \PindSet^{\Pind{m}}(\Omega \setminus \{ \omega \})} \big( \Tensor{D}_{\Pind{j}_{\Omega_t}} \Tensor{E}_{\Pind{j}_{\Omega_q}} \big) \prod_{\tau \neq t} \Tensor{C}^{(\tau)}_{\Pind{j}_{\Omega_\tau}}.
    \end{align*}
    We can build a partition $\pi'$ of $\Omega' \oplus \Omega'$ with the number of cells equal to
    \begin{equation*}
        |\pi'| = \begin{cases}
            \kappa - 2 & \text{if both}~\mho_t, \mho_q~\text{are singletons}, \\
            \kappa - 1 & \text{if one of}~\mho_t, \mho_q~\text{is a singleton}, \\
            \kappa & \text{if neither of}~\mho_t, \mho_q~\text{is a singleton}.
        \end{cases}
    \end{equation*}
    It remains to use the induction hypothesis and \cref{lemma:associative_trace_frob}.
\end{proof}

\begin{lemma}
\label{lemma:number_of_pairs}
Let $\varnothing \neq \Omega \subseteq [N]$, $\pi \in \Part_\kappa (\Omega \oplus \Omega)$, and $\Theta_\pi = \sum_{\omega \in \Omega} \theta_{\pi, \omega}$. Then
    \begin{equation*}
        2\Theta_\pi \leq \begin{cases}
            |\Omega|, & 1 \leq \kappa \leq |\Omega|, \\
            2|\Omega| - \kappa, & |\Omega| < \kappa \leq 2|\Omega|. 
        \end{cases}
    \end{equation*}
\end{lemma}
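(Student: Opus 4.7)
The plan is to introduce the set $P = \{\omega \in \Omega : \omega \text{ and } \omega + N \text{ lie in the same cell of } \pi\}$, so that $\theta_{\pi,\omega} = 1/2$ exactly when $\omega \in P$, and consequently $2\Theta_\pi = |P|$. Reformulated in these terms, the lemma asks for $|P| \leq |\Omega|$ when $1 \leq \kappa \leq |\Omega|$ and $|P| \leq 2|\Omega| - \kappa$ when $|\Omega| < \kappa \leq 2|\Omega|$. The first inequality is trivial since $P \subseteq \Omega$.

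The main task is the second inequality, which I would establish by bounding $\kappa$ from above through a careful counting of the cells of $\pi$ according to the elements they contain. I split the $\kappa$ cells into two disjoint groups: those that contain at least one paired pair $\{\omega, \omega+N\}$ with $\omega \in P$, and those that contain none. Since distinct paired pairs are disjoint two-element subsets of $\Omega \oplus \Omega$, at most $|P|$ cells can be of the first type. A cell of the second type consists only of elements of the form $\omega$ or $\omega+N$ with $\omega \in \Omega \setminus P$ (indeed, a cell containing both partners of some $\omega$ would force $\omega \in P$ and hence belong to the first type); there are exactly $2(|\Omega| - |P|)$ such elements in $\Omega \oplus \Omega$ and every cell is nonempty, so there are at most $2(|\Omega| - |P|)$ cells of the second type. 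Adding the two counts yields $\kappa \leq |P| + 2(|\Omega| - |P|) = 2|\Omega| - |P|$, which rearranges to $2\Theta_\pi = |P| \leq 2|\Omega| - \kappa$, as desired.

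No serious obstacle is anticipated, as the argument is purely combinatorial. The only subtlety worth verifying is that distinct paired pairs necessarily occupy distinct cells, which is immediate from their being pairwise disjoint as subsets of $\Omega \oplus \Omega$; and that cells of the second type contribute only one element per unpaired $\omega$, which follows directly from the definition of $P$. The extremal cases $\kappa = |\Omega|+1$ (forcing at least one unpaired $\omega$) and $\kappa = 2|\Omega|$ (all singletons, $|P|=0$) provide a useful sanity check for the bound.
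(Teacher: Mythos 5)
Your proof is correct, and it organizes the counting differently from the paper. The paper first notes $2\Theta_\pi \leq |\Omega|$ trivially, then, assuming $\kappa > |\Omega|$, lower-bounds the number of singleton cells via $2|\Omega| \geq |\Tilde{\pi}| + 2(\kappa - |\Tilde{\pi}|)$ and argues that the at most $2|\Omega| - |\Tilde{\pi}| \leq 4|\Omega| - 2\kappa$ elements lying in non-singleton cells can form at most $2|\Omega| - \kappa$ matched pairs. You instead classify the cells of $\pi$ by whether they contain a matched pair $\{\omega, \omega+N\}$: at most $|P| = 2\Theta_\pi$ cells of the first kind, at most $2(|\Omega| - |P|)$ of the second, giving the single inequality $\kappa \leq 2|\Omega| - 2\Theta_\pi$, valid for every $\kappa$, which together with the trivial bound $2\Theta_\pi \leq |\Omega|$ yields both cases at once. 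Both arguments are elementary double counting of the same depth; yours is marginally cleaner in that it needs no case split on $\kappa$ and no separate treatment of singletons. One small imprecision: your parenthetical justifies the converse of what is needed (``a cell containing both partners belongs to the first type''), whereas the claim that a second-type cell meets no element indexed by $\omega \in P$ follows from the definition of $P$ itself --- if such a cell contained $\omega$ or $\omega + N$ with $\omega \in P$, then, since $\omega$ and $\omega+N$ lie in a common cell and cells are disjoint, that common cell would be this one, so it would contain the pair and be of the first type. This is a one-line fix, not a gap.
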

\begin{proof}
    Note that $2 \Theta_\pi \leq |\Omega|$ for every $1 \leq \kappa \leq 2|\Omega|$ since there are $|\Omega|$ pairs of $\omega$ and $\omega+N$ in total. Assume $\kappa > |\Omega|$ and denote $\Tilde{\pi} = \set{\mho \in \pi}{|\mho| = 1}$. Then
    \begin{equation*}
        |\Omega \oplus \Omega| = 2|\Omega| = \sum_{\mho \in \pi} |\mho| = |\Tilde{\pi}| + \sum_{\mho \in \pi \setminus \Tilde{\pi}} |\mho| \geq |\Tilde{\pi}| + 2(\kappa - |\Tilde{\pi}|)
    \end{equation*}
    and $|\Tilde{\pi}| \geq 2(\kappa - |\Omega|)$. It follows that there are at most $2|\Omega| - |\Tilde{\pi}| \leq 4|\Omega| - 2\kappa$ dimensions to be paired in the remaining $\kappa - |\Tilde{\pi}|$ cells, hence at most $2|\Omega| - \kappa$ pairs can be formed.
\end{proof}

\subsection{Analysis of the specific quadratic form}

Recall that $\Delta(\Matrix{R}_1, \ldots, \Matrix{R}_{d-1})$ of \cref{eq:quadratic_form} is parametrized by $d$ matrices $\Matrix{W}_s \in \Real^{k_{s-1} \times k_s}$ with $k_0 = k_d = 1$ and takes $\Matrix{R}_{s} \in \Real^{k_s \times r_s}$ as input. Let $\Pind{k} = (k_1, \ldots, k_{d-1})$ and $\Pind{r} = (r_1, \ldots, r_{d-1})$. Let us define
\begin{gather*}
    \Tensor{C} = \Phi_{[d-1]}(\Matrix{W}_1, \ldots, \Matrix{W}_d) \in \Real^{\Pind{k}^{\times 2}}, \\
    \Tensor{C}_{\Pind{i} \PindCat \Pind{j}} = \Matrix{W}_1(1,\Pind{i}_1) \Matrix{W}_2(\Pind{j}_1,\Pind{i}_2) \ldots \Matrix{W}_d(\Pind{j}_{d-1},1), \quad \Pind{i}, \Pind{j} \in \PindSet^{\Pind{k}}.
\end{gather*}
More generally, let $\Omega = \{ \omega_1, \ldots, \omega_{|\Omega|} \} \subseteq [d-1]$ with $\omega_1 < \cdots < \omega_{|\Omega|}$ and take $|\Omega| + 1$ matrices $\widehat{\Matrix{W}}_1 \in \Real^{1 \times k_{\omega_1}}$, $\widehat{\Matrix{W}}_s \in \Real^{k_{\omega_{s-1}} \times k_{\omega_s}}$, $\widehat{\Matrix{W}}_{|\Omega| + 1} \in \Real^{k_{\omega_{|\Omega|}} \times 1}$. We define
\begin{gather*}
    \Tensor{\widehat{C}} = \Phi_{\Omega}(\widehat{\Matrix{W}}_1, \ldots, \widehat{\Matrix{W}}_{|\Omega| + 1}) \in \Real^{\Pind{k}^{\times 2}}(\Omega \oplus \Omega), \\
    \Tensor{\widehat{C}}_{\Pind{i} \PindCat \Pind{j}} = \widehat{\Matrix{W}}_1(1,\Pind{i}_{\omega_1}) \widehat{\Matrix{W}}_2(\Pind{j}_{\omega_1},\Pind{i}_{\omega_2}) \ldots \widehat{\Matrix{W}}_{|\Omega|+1}(\Pind{j}_{\omega_{|\Omega|}},1), \quad \Pind{i}, \Pind{j} \in \PindSet^{\Pind{k}}(\Omega), \\
    \Tensor{\widehat{D}} = \Psi_{\Omega}(\widehat{\Matrix{W}}_1, \ldots, \widehat{\Matrix{W}}_{|\Omega| + 1}) \in \Real^{\PindKron{\Pind{r}}{\Pind{k}}^{\times 2}}(\Omega \oplus \Omega), \\
    \Tensor{\widehat{D}}_{\PindKron{\Pind{\alpha}}{\Pind{i}} \PindCat \PindKron{\Pind{\beta}}{\Pind{j}}} = \Tensor{\widehat{C}}_{\Pind{i} \PindCat \Pind{j}} \prod\nolimits_{\omega \in \Omega} \delta_{\Pind{\alpha}_\omega, \Pind{\beta}_\omega}, \quad \Pind{i}, \Pind{j} \in \PindSet^{\Pind{k}}(\Omega), \quad \Pind{\alpha}, \Pind{\beta} \in \PindSet^{\Pind{r}}(\Omega).
\end{gather*}

\begin{lemma}
\label{lemma:specific_array}
Let $\Tensor{D} = \Psi_{[d-1]}(\Matrix{W}_1, \ldots, \Matrix{W}_d)$. Then
\begin{equation*}
    \Delta(\Matrix{R}_1, \ldots, \Matrix{R}_{d-1}) = \sum_{\Pind{i}, \Pind{j} \in \PindSet^{\Pind{k}}} \sum_{\Pind{\alpha}, \Pind{\beta} \in \PindSet^{\Pind{r}}} \Tensor{D}_{\PindKron{\Pind{\alpha}}{\Pind{i}} \PindCat \PindKron{\Pind{\beta}}{\Pind{j}}} \prod_{s = 1}^{d-1} \Matrix{R}_s(\Pind{i}_s, \Pind{\alpha}_s) \Matrix{R}_s(\Pind{j}_s, \Pind{\beta}_s).
\end{equation*}
\end{lemma}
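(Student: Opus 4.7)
The plan is to verify the identity directly by expanding the matrix-matrix products on the left-hand side into explicit index sums and then showing that the Kronecker deltas on the right-hand side collapse the double sum over $\Pind{\alpha}, \Pind{\beta}$ into a single diagonal sum.

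First, I would expand $\Delta(\Matrix{R}_1, \ldots, \Matrix{R}_{d-1})$ by introducing summation indices for each internal matrix product. Inserting $\alpha_s \in [r_s]$ between consecutive parenthesized blocks and $i_s, j_s \in [k_s]$ to resolve the inner products $\Matrix{R}_{s-1}^\trans \Matrix{W}_s \Matrix{R}_s$, one obtains
\begin{equation*}
    \Delta = \sum_{\Pind{i}, \Pind{j} \in \PindSet^{\Pind{k}}} \sum_{\Pind{\alpha} \in \PindSet^{\Pind{r}}} \Matrix{W}_1(1,\Pind{i}_1) \Matrix{W}_2(\Pind{j}_1, \Pind{i}_2) \ldots \Matrix{W}_d(\Pind{j}_{d-1},1) \prod_{s = 1}^{d-1} \Matrix{R}_s(\Pind{i}_s,\Pind{\alpha}_s) \Matrix{R}_s(\Pind{j}_s,\Pind{\alpha}_s),
\end{equation*}
where the product of $\Matrix{W}$-entries is precisely $\Tensor{C}_{\Pind{i} \PindCat \Pind{j}}$ with $\Tensor{C} = \Phi_{[d-1]}(\Matrix{W}_1, \ldots, \Matrix{W}_d)$.

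Next, I would turn to the right-hand side. By definition,
\begin{equation*}
    \Tensor{D}_{\PindKron{\Pind{\alpha}}{\Pind{i}} \PindCat \PindKron{\Pind{\beta}}{\Pind{j}}} = \Tensor{C}_{\Pind{i} \PindCat \Pind{j}} \prod_{s = 1}^{d-1} \delta_{\Pind{\alpha}_s, \Pind{\beta}_s},
\end{equation*}
so the product of Kronecker deltas reduces the sum $\sum_{\Pind{\alpha}, \Pind{\beta} \in \PindSet^{\Pind{r}}}$ to a single sum over $\Pind{\alpha} = \Pind{\beta}$. Substituting $\Pind{\beta} = \Pind{\alpha}$ in the factor $\prod_{s} \Matrix{R}_s(\Pind{i}_s,\Pind{\alpha}_s) \Matrix{R}_s(\Pind{j}_s,\Pind{\beta}_s)$ produces exactly the same expression as in the expansion of $\Delta$ above.

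The only real obstacle is the indexing bookkeeping: one must verify that the tuple $\PindKron{\Pind{\alpha}}{\Pind{i}} \PindCat \PindKron{\Pind{\beta}}{\Pind{j}}$ genuinely selects the claimed entry of $\Tensor{D} \in \Real^{\PindKron{\Pind{r}}{\Pind{k}}^{\times 2}}$. This is immediate from \cref{lemma:kron_cat} together with the definition of subarray extraction, which guarantees that $\Tensor{D}^{\{\Pind{\alpha} \PindCat \Pind{\beta}\}}_{\Pind{i} \PindCat \Pind{j}} = \Tensor{D}_{\PindKron{\Pind{\alpha}}{\Pind{i}} \PindCat \PindKron{\Pind{\beta}}{\Pind{j}}}$. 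Once this identification is in place, the two sides match term by term and the lemma follows.
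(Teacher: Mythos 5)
Your proposal is correct and follows essentially the same argument as the paper: both proofs reduce to the definition of $\Psi_{[d-1]}$ via $\Phi_{[d-1]}$, letting the Kronecker deltas collapse the sum over $\Pind{\beta}$ and matching the result with the index expansion of the matrix products in $\Delta$. The only difference is direction of travel (you expand the left-hand side and meet the contracted right-hand side, while the paper contracts the right-hand side into $\Delta$), which is immaterial.
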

\begin{proof}
Let $\Tensor{C} = \Phi_{[d-1]}(\Matrix{W}_1, \ldots, \Matrix{W}_d)$. Then the right-hand side is equal to
\begin{align*}
    \sum_{\Pind{i}, \Pind{j} \in \PindSet^{\Pind{k}}} &\sum_{\Pind{\alpha} \in \PindSet^{\Pind{r}}} \Tensor{C}_{\Pind{i} \PindCat \Pind{j}} \prod_{s = 1}^{d-1} \Matrix{R}_s(\Pind{i}_s, \Pind{\alpha}_s) \Matrix{R}_s(\Pind{j}_s, \Pind{\alpha}_s) \\
    = &\sum_{\Pind{\alpha} \in \PindSet^{\Pind{r}}} [\Matrix{W}_1 \Matrix{R}_1](1, \Pind{\alpha}_1) \cdot [\Matrix{R}_1^\trans \Matrix{W}_2 \Matrix{R}_2](\Pind{\alpha}_1, \Pind{\alpha}_2) \ldots [\Matrix{R}_{d-1}^\trans \Matrix{W}_d](\Pind{\alpha}_{d-1}, 1).
\end{align*}
\end{proof}

Our first step is to understand what the partial traces of $\Psi_{[d-1]}(\Matrix{W}_1, \ldots, \Matrix{W}_d)$ are. Thanks to \cref{lemma:associative_trace_frob}, it suffices to study partial traces along singletons. Recall that $\Compl{\Omega} = [d-1] \setminus \Omega$ denotes the complement of $\Omega \subseteq [d-1]$.

\begin{lemma}
\label{lemma:specific_trace_singleton}
Let $\Tensor{C} = \Phi_{[d-1]}(\Matrix{W}_1, \ldots, \Matrix{W}_d)$ and $\Tensor{D} = \Psi_{[d-1]}(\Matrix{W}_1, \ldots, \Matrix{W}_d)$. Then
\begin{equation*}
\begin{aligned}
    \Tensor{C}^{\langle\{s\}\rangle} &= \Phi_{\Compl{\{s\}}}(\Matrix{W}_1, \ldots, \Matrix{W}_{s} \Matrix{W}_{s+1}, \ldots, \Matrix{W}_d), \\
    \Tensor{D}^{\langle\{s\}\rangle} &= \Psi_{\Compl{\{s\}}}(\Matrix{W}_1, \ldots, \Matrix{W}_{s} \Matrix{W}_{s+1}, \ldots, \Matrix{W}_d) \cdot r_s,
\end{aligned}
\quad s \in [d-1].
\end{equation*}
\end{lemma}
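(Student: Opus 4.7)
The plan is to establish both identities by direct unfolding of the definitions. Both $\Phi$ and $\Psi$ are telescoping products whose factors at each position $\omega$ only involve the indices $\Pind{j}_{\omega-1}$, $\Pind{i}_\omega$ (and for $\Psi$, additionally the delta $\delta_{\Pind{\alpha}_\omega, \Pind{\beta}_\omega}$). Since the partial trace along $\{s\}$ only collapses entries at position $s$, the factors outside position $s$ pass through the sum untouched and we only need to identify what remains at position $s$.

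First I would treat $\Tensor{C}$. By the definition of the partial trace, for $\Pind{i}, \Pind{j} \in \PindSet^{\Pind{k}}(\Compl{\{s\}})$,
\begin{equation*}
    \Tensor{C}^{\langle\{s\}\rangle}_{\Pind{i}\PindCat\Pind{j}} = \sum_{\ell \in [k_s]} \Tensor{C}_{(\Pind{i}\PindJoin \ell)\PindCat(\Pind{j}\PindJoin\ell)}.
\end{equation*}
In the product defining $\Tensor{C}_{(\Pind{i}\PindJoin \ell)\PindCat(\Pind{j}\PindJoin\ell)}$, position $s$ contributes the two adjacent factors $\Matrix{W}_s(\Pind{j}_{s-1},\ell)\,\Matrix{W}_{s+1}(\ell,\Pind{i}_{s+1})$ (with the obvious conventions $\Pind{j}_0 = 1$, $\Pind{i}_d = 1$ handling the boundary cases $s=1$ and $s=d-1$). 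Summation over $\ell$ collapses these into a single factor $[\Matrix{W}_s\Matrix{W}_{s+1}](\Pind{j}_{s-1},\Pind{i}_{s+1})$, and the remaining factors exactly match the template of $\Phi_{\Compl{\{s\}}}$ applied to the shortened tuple $(\Matrix{W}_1,\ldots,\Matrix{W}_s\Matrix{W}_{s+1},\ldots,\Matrix{W}_d)$.

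Next I would treat $\Tensor{D}$. The trace here runs over $\ell' \in [r_s k_s]$; using the bijection $\ell' \leftrightarrow (\alpha,i) \in [r_s]\times[k_s]$ together with \cref{lemma:kron_join}, one rewrites the sum as a double sum over $\alpha$ and $i$. By the definition of $\Psi$ and the identity $\delta_{\alpha,\alpha}=1$, the entry factors as
\begin{equation*}
    \Tensor{D}_{\PindKron{(\Pind{\alpha}\PindJoin\alpha)}{(\Pind{i}\PindJoin i)}\PindCat \PindKron{(\Pind{\beta}\PindJoin\alpha)}{(\Pind{j}\PindJoin i)}} = \Tensor{C}_{(\Pind{i}\PindJoin i)\PindCat(\Pind{j}\PindJoin i)}\prod_{\omega \in \Compl{\{s\}}}\delta_{\Pind{\alpha}_\omega,\Pind{\beta}_\omega}.
\end{equation*}
Summation over $\alpha \in [r_s]$ introduces the factor $r_s$; the remaining summation over $i \in [k_s]$ reduces by the first part to $[\Phi_{\Compl{\{s\}}}(\Matrix{W}_1,\ldots,\Matrix{W}_s\Matrix{W}_{s+1},\ldots,\Matrix{W}_d)]_{\Pind{i}\PindCat\Pind{j}}$. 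Matching this against the definition of $\Psi_{\Compl{\{s\}}}$ applied to the same tuple yields the claim.

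The only mild obstacle is bookkeeping: one must be careful to apply \cref{lemma:kron_join} correctly to split the product-index summation into the $\alpha$-part and the $i$-part, and to verify the boundary conventions for $s\in\{1,d-1\}$ agree with the $k_0=k_d=1$ convention used in $\Phi$ and $\Psi$. Once these notational points are set up cleanly, the computation is essentially a one-line telescoping of matrix multiplication.
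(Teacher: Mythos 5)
Your proposal is correct and follows essentially the same route as the paper: telescoping the adjacent factors $\Matrix{W}_s(\Pind{j}_{s-1},\cdot)\Matrix{W}_{s+1}(\cdot,\Pind{i}_{s+1})$ under the trace for $\Tensor{C}$, and for $\Tensor{D}$ splitting the traced product index via \cref{lemma:kron_join} into its $\Pind{r}$- and $\Pind{k}$-parts so that the sum over the $r_s$ values of the rank index produces the factor $r_s$ while the remaining sum reduces to the first identity. The bookkeeping points you flag (boundary conventions $k_0=k_d=1$ and the correct use of \cref{lemma:kron_join}) are exactly the ones the paper handles, and no new idea is needed.
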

\begin{proof}
Let $\Pind{i}, \Pind{j} \in \PindSet^{\Pind{k}}(\Compl{\{s\}})$. By the definition of partial traces,
\begin{align*}
    \Tensor{C}^{\langle\{s\}\rangle}_{\Pind{i} \PindCat \Pind{j}} &= \sum_{\Pind{\ell} \in \PindSet^{\Pind{k}}(\{s\})} \Matrix{W}_1(1,\Pind{i}_1) \ldots \Matrix{W}_{s}(\Pind{j}_{s-1},\Pind{\ell}_{s}) \Matrix{W}_{s+1}(\Pind{\ell}_{s},\Pind{i}_{s+1}) \ldots \Matrix{W}_d(\Pind{j}_{d-1},1) \\
    &= \Matrix{W}_1(1,\Pind{i}_1) \ldots [\Matrix{W}_{s} \Matrix{W}_{s+1}](\Pind{j}_{s-1},\Pind{i}_{s+1}) \ldots \Matrix{W}_d(\Pind{j}_{d-1},1),
\end{align*}
which proves the first part. Let $\Pind{\alpha}, \Pind{\beta} \in \PindSet^{\Pind{r}}(\Compl{\{s\}})$. Then, using \cref{lemma:kron_join}, we get
\begin{align*}
    \Tensor{D}^{\langle\{s\}\rangle}_{\PindKron{\Pind{\alpha}}{\Pind{i}} \PindCat \PindKron{\Pind{\beta}}{\Pind{j}}} &= \sum_{\PindKron{\Pind{\rho}}{\Pind{\ell}} \in \PindSet^{\PindKron{\Pind{r}}{\Pind{k}}}(\{s\})} \Tensor{D}_{(\PindKron{\Pind{\alpha}}{\Pind{i}} \PindJoin \PindKron{\Pind{\rho}}{\Pind{\ell}}) \PindCat (\PindKron{\Pind{\beta}}{\Pind{j}} \PindJoin \PindKron{\Pind{\rho}}{\Pind{\ell}})} \\
    &= \sum_{\Pind{\rho} \in \PindSet^{\Pind{r}}(\{s\})} \sum_{\Pind{\ell} \in \PindSet^{\Pind{k}}(\{s\})} \Tensor{D}_{\PindKron{(\Pind{\alpha} \PindJoin \Pind{\rho})}{(\Pind{i} \PindJoin \Pind{\ell})} \PindCat \PindKron{(\Pind{\beta} \PindJoin \Pind{\rho})}{(\Pind{j} \PindJoin \Pind{\ell})}} \\
    &= \sum_{\Pind{\rho} \in \PindSet^{\Pind{r}}(\{s\})} \sum_{\Pind{\ell} \in \PindSet^{\Pind{k}}(\{s\})} \Tensor{C}_{(\Pind{i} \PindJoin \Pind{\ell}) \PindCat (\Pind{j} \PindJoin \Pind{\ell})} \prod_{t = 1}^{d-1} \delta_{(\Pind{\alpha} \PindJoin \Pind{\rho})_{t}, (\Pind{\beta} \PindJoin \Pind{\rho})_{t}} \\
    &= \sum_{\Pind{\rho} \in \PindSet^{\Pind{r}}(\{s\})} \Tensor{C}^{\langle\{s\}\rangle}_{\Pind{i} \PindCat \Pind{j}} \prod_{t \neq s} \delta_{\Pind{\alpha}_{t}, \Pind{\beta}_{t}} = r_s \cdot \Tensor{C}^{\langle\{s\}\rangle}_{\Pind{i} \PindCat \Pind{j}} \prod_{t \neq s} \delta_{\Pind{\alpha}_{t}, \Pind{\beta}_{t}}.
\end{align*}
\end{proof}

As a consequence of \cref{lemma:associative_trace_frob,lemma:specific_trace_singleton}, we deduce that partial traces preserve the inherent structure of the arrays $\Phi_{[d-1]}(\Matrix{W}_1, \ldots, \Matrix{W}_d)$ and $\Psi_{[d-1]}(\Matrix{W}_1, \ldots, \Matrix{W}_d)$.

\begin{corollary}
\label{corollary:specific_trace}
For every $\varnothing \neq \Omega \subseteq [d-1]$,
\begin{align*}
    \Tensor{C}^{\langle\Compl{\Omega}\rangle} = \Phi_{\Omega}(\widehat{\Matrix{W}}_1, \ldots, \widehat{\Matrix{W}}_{|\Omega| + 1}), \quad
    \Tensor{D}^{\langle\Compl{\Omega}\rangle} = \Psi_{\Omega}(\widehat{\Matrix{W}}_1, \ldots, \widehat{\Matrix{W}}_{|\Omega| + 1}) \cdot \prod\nolimits_{\omega \in \Compl{\Omega}} r_\omega,
\end{align*}
where $\widehat{\Matrix{W}}_t = \prod_{s = s_{t}}^{s_{t+1}-1} \Matrix{W}_s$ with $1 = s_1 < s_2 < \cdots < s_{|\Omega|+2} = d + 1$.
\end{corollary}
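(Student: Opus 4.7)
My plan is to prove both identities by induction on $m = |\Compl{\Omega}|$, iteratively applying \cref{lemma:specific_trace_singleton} and using the associativity of partial traces from \cref{lemma:associative_trace_frob}. The base case $m = 0$ corresponds to $\Omega = [d-1]$, for which $\Tensor{C}^{\langle\varnothing\rangle} = \Tensor{C} = \Phi_{[d-1]}(\Matrix{W}_1, \ldots, \Matrix{W}_d)$ and $\Tensor{D}^{\langle\varnothing\rangle} = \Tensor{D} = \Psi_{[d-1]}(\Matrix{W}_1, \ldots, \Matrix{W}_d)$ by definition, with the empty product $\prod_{\omega \in \varnothing} r_\omega = 1$ matching the stated identity trivially.

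For the inductive step, I pick any $t \in \Compl{\Omega}$ and set $\Omega' = \Omega \cup \{ t \}$, so that $|\Compl{\Omega'}| = m - 1$. By part 1 of \cref{lemma:associative_trace_frob},
\begin{equation*}
    \Tensor{C}^{\langle \Compl{\Omega} \rangle} = \bigl( \Tensor{C}^{\langle \Compl{\Omega'} \rangle} \bigr)^{\langle \{ t \} \rangle}, \quad \Tensor{D}^{\langle \Compl{\Omega} \rangle} = \bigl( \Tensor{D}^{\langle \Compl{\Omega'} \rangle} \bigr)^{\langle \{ t \} \rangle}.
\end{equation*}
By the induction hypothesis, the inner partial traces satisfy $\Tensor{C}^{\langle \Compl{\Omega'} \rangle} = \Phi_{\Omega'}(\widetilde{\Matrix{W}}_1, \ldots, \widetilde{\Matrix{W}}_{|\Omega'|+1})$ and $\Tensor{D}^{\langle \Compl{\Omega'} \rangle} = \Psi_{\Omega'}(\widetilde{\Matrix{W}}_1, \ldots, \widetilde{\Matrix{W}}_{|\Omega'|+1}) \cdot \prod_{\omega \in \Compl{\Omega'}} r_\omega$ for a contiguous-block product list $\widetilde{\Matrix{W}}$. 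Applying \cref{lemma:specific_trace_singleton} in its natural extension to $\Phi_{\Omega'}$- and $\Psi_{\Omega'}$-type arrays at the dimension $t$ merges the two matrices adjacent to $t$ in $\widetilde{\Matrix{W}}$ into their product and, in the $\Psi$ case, produces the additional factor $r_t$. The resulting array is precisely of the form $\Phi_{\Omega}(\widehat{\Matrix{W}}_1, \ldots, \widehat{\Matrix{W}}_{|\Omega|+1})$ (respectively, $\Psi_{\Omega}(\widehat{\Matrix{W}}_1, \ldots, \widehat{\Matrix{W}}_{|\Omega|+1})$) on the new block list, with total prefactor $r_t \cdot \prod_{\omega \in \Compl{\Omega'}} r_\omega = \prod_{\omega \in \Compl{\Omega}} r_\omega$ in the $\Psi$ case. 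Unwinding the induction, each $\widehat{\Matrix{W}}_\tau$ is the product of those $\Matrix{W}_s$ whose indices $s$ fall between two consecutive elements of $\Omega$ (with the understood boundary endpoints).

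The main obstacle is notational rather than substantive: \cref{lemma:specific_trace_singleton} is stated only for $\Phi_{[d-1]}$ and $\Psi_{[d-1]}$, while the induction requires its analog for $\Phi_S$ and $\Psi_S$ on an arbitrary $S \subseteq [d-1]$. However, its proof depends solely on the local telescoping of matrix products in the definition of $\Phi_S$ together with the Kronecker-delta structure in $\Psi_S$, and carries over verbatim to the general setting. An alternative, entirely computational route would be to unfold the definition of $\Tensor{C}^{\langle \Compl{\Omega} \rangle}$ in one shot: for every $\omega \in \Compl{\Omega}$, the summation variable $\Pind{\ell}_\omega$ appears as both the ``right'' index of $\Matrix{W}_\omega$ and the ``left'' index of $\Matrix{W}_{\omega+1}$, so each maximal contiguous run of traced-out dimensions collapses into a single matrix product, reproducing the announced $\widehat{\Matrix{W}}_\tau$. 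In the $\Psi$ case, the additional sum over $\Pind{\rho}_\omega$ against $\delta_{\Pind{\alpha}_\omega, \Pind{\beta}_\omega}$ contributes the factor $r_\omega$.
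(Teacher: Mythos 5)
Your proposal is correct and follows essentially the same route as the paper, whose entire proof is to apply \cref{lemma:specific_trace_singleton} successively for each $\omega \in \Compl{\Omega}$ (with \cref{lemma:associative_trace_frob} justifying the order-independence); your induction on $|\Compl{\Omega}|$ is just a more explicit write-up of that iteration. You even flag, and correctly resolve, the one point the paper leaves implicit --- that the singleton-trace lemma must be read as holding for $\Phi_S$, $\Psi_S$ on arbitrary $S \subseteq [d-1]$, not only for $S = [d-1]$.
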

\begin{proof}
    Apply \cref{lemma:specific_trace_singleton} successively for each $\omega \in \Compl{\Omega}$. This partitions the set $[d]$ into $|\Omega|+1$ intervals $[s_t, s_{t+1}-1]$. At each step, the interval to which $\omega$ belongs is joined with the neighboring interval on its right.
\end{proof}

This leads us to the next step: estimating the $\Norm{\cdot}{\pi}$ norms of such partial arrays.

\begin{lemma}
\label{lemma:specific_partition_norm}
Let $\Tensor{C} = \Phi_{[d-1]}(\Matrix{W}_1, \ldots, \Matrix{W}_d)$ and $\Tensor{D} = \Psi_{[d-1]}(\Matrix{W}_1, \ldots, \Matrix{W}_d)$. For every $\varnothing \neq \Omega \subseteq [d-1]$ and $\pi \in \Part_{\kappa}(\Omega \oplus \Omega)$,
\begin{align*}
    \Norm{\Tensor{C}^{\langle\Compl{\Omega}\rangle}}{\pi} &\leq \prod\nolimits_{s = 1}^{d} \Norm{\Matrix{W}_s}{\Frob}, \quad
    \Norm{\Tensor{D}^{\langle\Compl{\Omega}\rangle}}{\pi} \leq \Norm{\Tensor{C}^{\langle\Compl{\Omega}\rangle}}{\pi} \left(\prod\nolimits_{\omega \in \Compl{\Omega}} r_\omega\right) \left(\prod\nolimits_{\omega \in \Omega} r_\omega^{\theta_{\pi,\omega}}\right).
\end{align*}
\end{lemma}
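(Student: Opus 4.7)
The plan is to dispatch the two inequalities separately, exploiting the structural descriptions of $\Tensor{C}^{\langle\Compl{\Omega}\rangle}$ and $\Tensor{D}^{\langle\Compl{\Omega}\rangle}$ provided by \cref{corollary:specific_trace}.

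For the first inequality, I would use the norm hierarchy \cref{eq:norm_hierarchy} to bound $\Norm{\Tensor{C}^{\langle\Compl{\Omega}\rangle}}{\pi} \leq \Norm{\Tensor{C}^{\langle\Compl{\Omega}\rangle}}{\Frob}$ and then exploit the factored structure $\Tensor{C}^{\langle\Compl{\Omega}\rangle} = \Phi_\Omega(\widehat{\Matrix{W}}_1, \ldots, \widehat{\Matrix{W}}_{|\Omega|+1})$. The entries of this array are products of matrix elements of the $\widehat{\Matrix{W}}_t$ whose indices form disjoint ``bonds'' partitioning $\Omega \oplus \Omega$, so the squared Frobenius norm factorizes as $\prod_t \Norm{\widehat{\Matrix{W}}_t}{\Frob}^2$. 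Submultiplicativity of the Frobenius norm applied to each $\widehat{\Matrix{W}}_t$ (a contiguous product of some $\Matrix{W}_s$) then telescopes into $\prod_{s=1}^d \Norm{\Matrix{W}_s}{\Frob}$.

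The second inequality is the main substance. From \cref{corollary:specific_trace} I would read off
\begin{equation*}
    \Tensor{D}^{\langle\Compl{\Omega}\rangle}_{\PindKron{\Pind{\alpha}}{\Pind{i}} \PindCat \PindKron{\Pind{\beta}}{\Pind{j}}} = \Bigl(\prod_{\omega \in \Compl{\Omega}} r_\omega\Bigr) \Tensor{C}^{\langle\Compl{\Omega}\rangle}_{\Pind{i}\PindCat\Pind{j}} \prod_{\omega \in \Omega} \delta_{\Pind{\alpha}_\omega, \Pind{\beta}_\omega}.
\end{equation*}
Substituting this into the definition of $\Norm{\Tensor{D}^{\langle\Compl{\Omega}\rangle}}{\pi}$ and collapsing the Kronecker deltas (setting $\Pind{\beta}=\Pind{\alpha}$) pulls out the prefactor $\prod_{\omega \in \Compl{\Omega}} r_\omega$ and leaves the supremum, over test arrays $\Tensor{Z}^{(\tau)}$ of unit Frobenius norm, of $S = \sum_{\Pind{\alpha}} S_{\Pind{\alpha}}$, where
\begin{equation*}
    S_{\Pind{\alpha}} = \sum_{\Pind{i},\Pind{j}} \Tensor{C}^{\langle\Compl{\Omega}\rangle}_{\Pind{i}\PindCat\Pind{j}} \prod_\tau \Tensor{U}^{(\tau,\Pind{\alpha})}_{(\Pind{i}\PindCat\Pind{j})_{\mho_\tau}}
\end{equation*}
and $\Tensor{U}^{(\tau,\Pind{\alpha})}$ is the partial array on $\mho_\tau$ with dimensions $\Pind{k}^{\times 2}$ obtained from $\Tensor{Z}^{(\tau)}$ by freezing the ``$\Pind{r}$-part'' of its index to $\Pind{\alpha}$.

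The key observation is that, for each fixed $\Pind{\alpha}$, $S_{\Pind{\alpha}}$ is an admissible test sum in the definition of $\Norm{\Tensor{C}^{\langle\Compl{\Omega}\rangle}}{\pi}$, so $|S_{\Pind{\alpha}}| \leq \Norm{\Tensor{C}^{\langle\Compl{\Omega}\rangle}}{\pi} \prod_\tau \Norm{\Tensor{U}^{(\tau,\Pind{\alpha})}}{\Frob}$. I would then treat the remaining sum over $\Pind{\alpha}$ by \cref{lemma:cauchy_schwarz} applied to the partial arrays $\Pind{\alpha}_{\Omega_\tau} \mapsto \Norm{\Tensor{U}^{(\tau,\Pind{\alpha})}}{\Frob}$ on $\Omega_\tau$, which contributes the factor $\prod_{\omega \in \Omega} r_\omega^{\theta_{\pi,\omega}}$ multiplied by $\prod_\tau \bigl(\sum_{\Pind{\alpha}_{\Omega_\tau}} \Norm{\Tensor{U}^{(\tau,\Pind{\alpha})}}{\Frob}^2\bigr)^{1/2}$. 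Each of these inner factors is at most $\Norm{\Tensor{Z}^{(\tau)}}{\Frob} = 1$, because expanding the square re-enumerates a subset of the entries of $|\Tensor{Z}^{(\tau)}|^2$: positions $\omega$ with both $\omega, \omega+N \in \mho_\tau$ have their $\Pind{r}$-coordinates locked diagonally, and it is precisely these positions that carry $\theta_{\pi,\omega}=1/2$.

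The main hurdle I anticipate is the index bookkeeping --- verifying that $\Tensor{U}^{(\tau,\Pind{\alpha})}$ is a legitimate element of $\Real^{\Pind{k}^{\times 2}}(\mho_\tau)$ so that the $\pi$-norm inequality applies to $S_{\Pind{\alpha}}$, that $\Pind{\alpha}_{\Omega_\tau} \mapsto \Norm{\Tensor{U}^{(\tau,\Pind{\alpha})}}{\Frob}$ has the dimensions $\Pind{r}$ required by \cref{lemma:cauchy_schwarz} on $\Omega$, and that the ``diagonal subset'' estimate bounding $\sum_{\Pind{\alpha}_{\Omega_\tau}} \Norm{\Tensor{U}^{(\tau,\Pind{\alpha})}}{\Frob}^2$ by $\Norm{\Tensor{Z}^{(\tau)}}{\Frob}^2$ respects the $\PindKron{}{}$ and $\PindCat$ conventions set up in \cref{lemma:kron_join,lemma:kron_cat}.
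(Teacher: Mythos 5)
Your proposal is correct and follows essentially the same route as the paper's proof: the first bound via the norm hierarchy \cref{eq:norm_hierarchy}, \cref{corollary:specific_trace}, and submultiplicativity of the Frobenius norm, and the second by collapsing the Kronecker deltas, applying the definition of $\Norm{\Tensor{C}^{\langle\Compl{\Omega}\rangle}}{\pi}$ for each fixed $\Pind{\alpha}$ to the frozen subarrays (your $\Tensor{U}^{(\tau,\Pind{\alpha})}$ are exactly the paper's $[\Tensor{Z}^{(\tau)}]^{\{\Pind{\alpha}\PindCat\Pind{\alpha}\}_{\mho_\tau}}$), then invoking \cref{lemma:cauchy_schwarz} on the arrays $\Pind{\alpha}_{\Omega_\tau}\mapsto\Norm{\Tensor{U}^{(\tau,\Pind{\alpha})}}{\Frob}$ and bounding their Frobenius norms by $\Norm{\Tensor{Z}^{(\tau)}}{\Frob}=1$ via the diagonal-subset observation. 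No gaps.
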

\begin{proof}
For the first bound, we use \cref{eq:norm_hierarchy}, \cref{corollary:specific_trace}, and the submultiplicativity of the Frobenius norm for matrices:
\begin{equation*}
    \Norm{\Tensor{C}^{\langle\Compl{\Omega}\rangle}}{\pi} \leq \Norm{\Tensor{C}^{\langle\Compl{\Omega}\rangle}}{\Frob} = \prod\nolimits_{t = 1}^{|\Omega|+1} \Norm{\widehat{\Matrix{W}}_{t}}{\Frob} \leq \prod\nolimits_{s = 1}^{d} \Norm{\Matrix{W}_s}{\Frob}.
\end{equation*}
Next, let $\pi = \{ \mho_{\tau} \}_{\tau = 1}^{\kappa}$. By definition, the norm $\Norm{\Tensor{D}^{\langle\Compl{\Omega}\rangle}}{\pi}$ is equal to
\begin{equation*}
    \sup\set[\Bigg]{\sum_{\Pind{i}, \Pind{j} \in \PindSet^{\Pind{k}}(\Omega)} \sum_{\Pind{\alpha}, \Pind{\beta} \in \PindSet^{\Pind{r}}(\Omega)} \Tensor{D}^{\langle\Compl{\Omega}\rangle}_{\PindKron{\Pind{\alpha}}{\Pind{i}} \PindCat \PindKron{\Pind{\beta}}{\Pind{j}}} \prod_{\tau = 1}^{\kappa} \Tensor{Z}^{(\tau)}_{(\PindKron{\Pind{\alpha}}{\Pind{i}} \PindCat \PindKron{\Pind{\beta}}{\Pind{j}})_{\mho_\tau}}}{\Tensor{Z}^{(\tau)} \in \Real^{\PindKron{\Pind{r}}{\Pind{k}}}(\mho_\tau),~\Norm{\Tensor{Z}^{(\tau)}}{\Frob} = 1}.
\end{equation*}
According to \cref{corollary:specific_trace} and the definition of $\Norm{\Tensor{C}^{\langle\Compl{\Omega}\rangle}}{\pi}$,
\begin{align*}
    \Norm{\Tensor{D}^{\langle\Compl{\Omega}\rangle}}{\pi} &= \prod_{\omega \in \Compl{\Omega}} r_\omega \cdot \sup_{\{ \Tensor{Z}^{(\tau)} \}} \left[ \sum_{\Pind{\alpha} \in \PindSet^{\Pind{r}}(\Omega)} \sum_{\Pind{i}, \Pind{j} \in \PindSet^{\Pind{k}}(\Omega)} \Tensor{C}^{\langle\Compl{\Omega}\rangle}_{\Pind{i} \PindCat \Pind{j}} \prod_{\tau = 1}^{\kappa} \Tensor{Z}^{(\tau)}_{(\PindKron{\Pind{\alpha}}{\Pind{i}} \PindCat \PindKron{\Pind{\alpha}}{\Pind{j}})_{\mho_\tau}} \right] \\
    &= \prod_{\omega \in \Compl{\Omega}} r_\omega \cdot \sup_{\{ \Tensor{Z}^{(\tau)} \}} \left[ \sum_{\Pind{\alpha} \in \PindSet^{\Pind{r}}(\Omega)} \sum_{\Pind{i}, \Pind{j} \in \PindSet^{\Pind{k}}(\Omega)} \Tensor{C}^{\langle\Compl{\Omega}\rangle}_{\Pind{i} \PindCat \Pind{j}} \prod_{\tau = 1}^{\kappa} [\Tensor{Z}^{(\tau)}]^{\{\Pind{\alpha} \PindCat \Pind{\alpha}\}_{\mho_\tau}}_{(\Pind{i} \PindCat \Pind{j})_{\mho_\tau}} \right] \\
    &\leq \prod_{\omega \in \Compl{\Omega}} r_\omega \cdot \sup_{\{ \Tensor{Z}^{(\tau)} \}} \left[ \sum_{\Pind{\alpha} \in \PindSet^{\Pind{r}}(\Omega)} \Norm{\Tensor{C}^{\langle\Compl{\Omega}\rangle}}{\pi} \prod_{\tau = 1}^{\kappa} \Norm{[\Tensor{Z}^{(\tau)}]^{\{\Pind{\alpha} \PindCat \Pind{\alpha}\}_{\mho_\tau}}}{\Frob} \right].
\end{align*}
Let $\Omega_\tau = (\mho_\tau \cup (\mho_\tau - N)) \cap \Omega$ and define $\Tensor{E}^{(\tau)} \in \Real^{\Pind{r}}(\Omega_\tau)$ by $\Tensor{E}^{(\tau)}_{\Pind{\alpha}} = \Norm{[\Tensor{Z}^{(\tau)}]^{\{\Pind{\alpha} \PindCat \Pind{\alpha}\}_{\mho_\tau}}}{\Frob}$ for $\alpha \in \PindSet^{\Pind{r}}(\Omega_\tau)$. Then we can apply \cref{lemma:cauchy_schwarz} to these auxiliary partial arrays:
\begin{align*}
    \Norm{\Tensor{D}^{\langle\Compl{\Omega}\rangle}}{\pi} &\leq \Norm{\Tensor{C}^{\langle\Compl{\Omega}\rangle}}{\pi} \prod_{\omega \in \Compl{\Omega}} r_\omega \cdot \sup_{\{ \Tensor{Z}^{(\tau)} \}} \left[ \sum_{\Pind{\alpha} \in \PindSet^{\Pind{r}}(\Omega)} \prod_{\tau = 1}^{\kappa} \Tensor{E}^{(\tau)}_{\Pind{\alpha}_{\Omega_\tau}} \right] \\
    &\leq \Norm{\Tensor{C}^{\langle\Compl{\Omega}\rangle}}{\pi} \prod_{\omega \in \Compl{\Omega}} r_\omega \cdot \sup_{\{ \Tensor{Z}^{(\tau)} \}} \left[ \prod_{\omega \in \Omega} r_\omega^{\theta_{\pi, \omega}} \prod_{\tau = 1}^{\kappa} \Norm{\Tensor{E}^{(\tau)}}{\Frob} \right].
\end{align*}
Finally, note that $\Norm{\Tensor{E}^{(\tau)}}{\Frob} \leq \Norm{\Tensor{Z}^{(\tau)}}{\Frob} = 1$.
\end{proof}

\begin{corollary}
\label{corollary:specific_partition_norm}
Let $\Tensor{D} = \Psi_{[d-1]}(\Matrix{W}_1, \ldots, \Matrix{W}_d)$ and $r = \max_{s \in [d-1]} r_s$. For every $\varnothing \neq \Omega \subseteq [d-1]$ and $\pi \in \Part_{\kappa}(\Omega \oplus \Omega)$,
\begin{equation*}
    \Norm{\Tensor{D}^{\langle\Compl{\Omega}\rangle}}{\pi} \leq r^{|\Compl{\Omega}| + \Theta_\pi} \prod\nolimits_{s = 1}^{d} \Norm{\Matrix{W}_s}{\Frob}.
\end{equation*}
\end{corollary}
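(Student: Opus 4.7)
The plan is to chain the two estimates already supplied by \cref{lemma:specific_partition_norm} and then pass from the individual TT rank components $r_\omega$ to their uniform upper bound $r = \max_{s \in [d-1]} r_s$. Concretely, the second part of \cref{lemma:specific_partition_norm} gives
\begin{equation*}
    \Norm{\Tensor{D}^{\langle\Compl{\Omega}\rangle}}{\pi} \leq \Norm{\Tensor{C}^{\langle\Compl{\Omega}\rangle}}{\pi} \left(\prod\nolimits_{\omega \in \Compl{\Omega}} r_\omega\right) \left(\prod\nolimits_{\omega \in \Omega} r_\omega^{\theta_{\pi,\omega}}\right),
\end{equation*}
and the first part bounds $\Norm{\Tensor{C}^{\langle\Compl{\Omega}\rangle}}{\pi}$ by $\prod_{s=1}^{d} \Norm{\Matrix{W}_s}{\Frob}$.

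Next I would replace each $r_\omega$ on the right-hand side by $r$. This immediately yields $\prod_{\omega \in \Compl{\Omega}} r_\omega \leq r^{|\Compl{\Omega}|}$ for the first product. For the second product, since $\theta_{\pi,\omega} \in \{0, 1/2\}$ and $r \geq 1$, each factor satisfies $r_\omega^{\theta_{\pi,\omega}} \leq r^{\theta_{\pi,\omega}}$, whence $\prod_{\omega \in \Omega} r_\omega^{\theta_{\pi,\omega}} \leq r^{\sum_{\omega \in \Omega} \theta_{\pi,\omega}} = r^{\Theta_\pi}$ by the definition of $\Theta_\pi$. Combining the two exponents of $r$ into $r^{|\Compl{\Omega}| + \Theta_\pi}$ gives the claimed bound.

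No genuine obstacle arises here: the statement is really a cosmetic reformulation of \cref{lemma:specific_partition_norm} in terms of the uniform rank $r$, so that it can be plugged into the Hanson--Wright-type bound of \cref{theorem:moments_bound} together with \cref{lemma:number_of_pairs} in the subsequent deviation estimate. The only thing to watch is the convention $r \geq 1$, which is automatic since each $r_s$ is a positive integer.
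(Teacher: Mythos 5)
Your proposal is correct and follows exactly the route of the paper: chain the two bounds of \cref{lemma:specific_partition_norm}, then replace each $r_\omega$ by $r=\max_s r_s$ and use $\Theta_\pi=\sum_{\omega\in\Omega}\theta_{\pi,\omega}$ (as introduced in \cref{lemma:number_of_pairs}) to combine the exponents into $r^{|\Compl{\Omega}|+\Theta_\pi}$. Nothing further is needed.
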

\begin{proof}
Use \cref{lemma:specific_partition_norm} and the definition of $\Theta_\pi$ in \cref{lemma:number_of_pairs}.
\end{proof}

\subsection{Moment and tail bounds}

\begin{theorem}
\label{theorem:specific_moments_bound}
    Let $d \geq 2$. Let $\{ \Matrix{R}_s \}_{s = 1}^{d-1}$ be random matrices of size $\Matrix{R}_s \in \Real^{k_s \times r_s}$ with independent sub-Gaussian entries satisfying
    \begin{equation*}
        \Expectation \Matrix{R}_s(\Pind{i}_s, \Pind{\alpha}_s) = 0, \quad \Expectation |\Matrix{R}_s(\Pind{i}_s, \Pind{\alpha}_s)|^2 = \eta_s^2, \quad \Norm{\Matrix{R}_s(\Pind{i}_s, \Pind{\alpha}_s)}{\psi_2} \leq \eta_s L
    \end{equation*}
    for some constants $\eta_s > 0$ and $L \geq 1$ and all $\Pind{i} \in \PindSet^{\Pind{k}}$, $\Pind{\alpha} \in \PindSet^{\Pind{r}}$. Then there exists an absolute constant $\hat{c}_{d} > 0$ that depends only on $d$ such that the quadratic form $\Delta(\Matrix{R}_1, \ldots, \Matrix{R}_{d-1})$ \cref{eq:quadratic_form} satisfies, for all $p \geq 1$ and $r = \max_{s \in [d-1]} r_s$, the bounds
    \begin{equation*}
        \Norm{\Delta - \Expectation \Delta}{\Ell_p} \leq \hat{c}_{d} \prod_{s = 1}^{d-1} (\eta_s L \sqrt{r})^2 \prod_{t = 1}^{d} \Norm{\Matrix{W}_t}{\Frob} \sum_{\kappa = 1}^{2d-2} \frac{p^{\frac{\kappa}{2}}}{r^{\frac{\kappa}{2}}}.
    \end{equation*}
\end{theorem}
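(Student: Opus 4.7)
The strategy is to recognize $\Delta$ as an instance of the general quadratic form treated by \cref{theorem:moments_bound} and then use the structural bounds already assembled for arrays of the form $\Psi_{[d-1]}$.

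First, by \cref{lemma:specific_array}, $\Delta(\Matrix{R}_1, \ldots, \Matrix{R}_{d-1})$ equals a quadratic form in the vectorized matrices $\Matrix{R}_s$, where each $\Matrix{R}_s$ is viewed as a vector of length $k_s r_s$ indexed by pairs $(\Pind{i}_s, \Pind{\alpha}_s)$ identified with $\PindKron{\Pind{\alpha}_s}{\Pind{i}_s} \in [k_s r_s]$. The underlying array is $\Tensor{D} = \Psi_{[d-1]}(\Matrix{W}_1, \ldots, \Matrix{W}_d) \in \Real^{\PindKron{\Pind{r}}{\Pind{k}}^{\times 2}}$. I therefore apply \cref{theorem:moments_bound} with $N = d-1$ and $\Pind{m} = \PindKron{\Pind{r}}{\Pind{k}}$, which yields, for all $p \geq 1$,
\begin{equation*}
    \Norm{\Delta - \Expectation \Delta}{\Ell_p} \leq \mathring{c}_{d-1} \prod_{s = 1}^{d-1} (\eta_s L)^2 \sum_{\varnothing \neq \Omega \subseteq [d-1]} \sum_{\kappa = 1}^{2|\Omega|} p^{\frac{\kappa}{2}} \sum_{\pi \in \Part_{\kappa}(\Omega \oplus \Omega)} \Norm{\Tensor{D}^{\langle\Compl{\Omega}\rangle}}{\pi}.
\end{equation*}

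Next, I bound each partition norm using \cref{corollary:specific_partition_norm}:
\begin{equation*}
    \Norm{\Tensor{D}^{\langle\Compl{\Omega}\rangle}}{\pi} \leq r^{|\Compl{\Omega}| + \Theta_\pi} \prod\nolimits_{s = 1}^{d} \Norm{\Matrix{W}_s}{\Frob}.
\end{equation*}
The key arithmetic step is to show that the exponent satisfies $|\Compl{\Omega}| + \Theta_\pi \leq d - 1 - \kappa/2$. From \cref{lemma:number_of_pairs}, in both regimes one has $2\Theta_\pi \leq 2|\Omega| - \kappa$ (in the regime $\kappa \leq |\Omega|$ this follows because $|\Omega| \leq 2|\Omega|-\kappa$), so $\Theta_\pi \leq |\Omega| - \kappa/2$. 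Combining with $|\Compl{\Omega}| = d-1-|\Omega|$ gives the stated exponent bound, hence $r^{|\Compl{\Omega}| + \Theta_\pi} \leq r^{d-1} \cdot r^{-\kappa/2}$.

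Finally, the number of non-empty subsets $\Omega \subseteq [d-1]$ and the number of partitions $\pi \in \Part_{\kappa}(\Omega \oplus \Omega)$ depend only on $d$, so the double sum over $\Omega$ and $\pi$ can be absorbed into a $d$-dependent constant. Factoring $r^{d-1} \prod_{s=1}^{d-1}(\eta_s L)^2 = \prod_{s=1}^{d-1}(\eta_s L \sqrt{r})^2$, collecting terms, and renaming the constant $\hat{c}_d$ delivers
\begin{equation*}
    \Norm{\Delta - \Expectation \Delta}{\Ell_p} \leq \hat{c}_{d} \prod_{s = 1}^{d-1} (\eta_s L \sqrt{r})^2 \prod_{t = 1}^{d} \Norm{\Matrix{W}_t}{\Frob} \sum_{\kappa = 1}^{2d-2} \frac{p^{\frac{\kappa}{2}}}{r^{\frac{\kappa}{2}}},
\end{equation*}
as desired. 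The only non-routine step is verifying $2\Theta_\pi \leq 2|\Omega| - \kappa$ uniformly in $\kappa$, but this is a direct consequence of \cref{lemma:number_of_pairs}; everything else is bookkeeping to reconcile the normalization $\eta_s^2 r = (\eta_s \sqrt{r})^2$ with the exponent $d-1$ of $r$ produced by the partial traces.
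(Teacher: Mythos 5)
Your proposal is correct and follows essentially the same route as the paper: identify the array of $\Delta$ via \cref{lemma:specific_array}, apply \cref{theorem:moments_bound} with $N = d-1$ and $\Pind{m} = \PindKron{\Pind{r}}{\Pind{k}}$, bound the partition norms by \cref{corollary:specific_partition_norm}, and control $\Theta_\pi$ via \cref{lemma:number_of_pairs}. Your only deviation is a harmless streamlining of the final bookkeeping --- the uniform bound $2\Theta_\pi \leq 2|\Omega| - \kappa$ (valid since $|\Omega| \leq 2|\Omega| - \kappa$ when $\kappa \leq |\Omega|$ and $r \geq 1$) lets you absorb the sums over $\Omega$ and $\pi$ into a $d$-dependent constant directly, whereas the paper keeps the two regimes separate and rearranges the sums explicitly with Stirling and binomial counting, arriving at the same bound.
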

\begin{proof}
By \cref{lemma:specific_array}, the array of $\Delta$ is $\Tensor{D} = \Psi_{[d-1]}(\Matrix{W}_1, \ldots, \Matrix{W}_d)$. Combining \cref{theorem:moments_bound,corollary:specific_partition_norm}, we obtain, with $\Lambda = \prod_{s = 1}^{d-1} (\eta_s L \sqrt{r})^2 \prod_{t = 1}^{d} \Norm{\Matrix{W}_t}{\Frob}$,
\begin{equation*}
    \Norm{\Delta - \Expectation \Delta}{\Ell_p} \leq \mathring{c}_{d-1} \Lambda \sum_{\varnothing \neq \Omega \subseteq [d-1]} \sum_{\kappa = 1}^{2|\Omega|} p^{\frac{\kappa}{2}} \sum_{\pi \in \Part_{\kappa} (\Omega \oplus \Omega)} r^{\Theta_\pi - |\Omega|}.
\end{equation*}
\cref{lemma:number_of_pairs} provides an upper bound on $\Theta_\pi$ so that
\begin{align*}
    \Norm{\Delta - \Expectation \Delta}{\Ell_p} &\leq \mathring{c}_{d-1} \Lambda \sum_{\Omega} \left( \sum_{\kappa = 1}^{|\Omega|} \left[ \sum_{\pi \in \Part_{\kappa}} \frac{p^{\frac{\kappa}{2}}}{r^{ \frac{|\Omega|}{2}}} \right] + \frac{p^{ \frac{|\Omega|}{2}}}{r^{ \frac{|\Omega|}{2}}} \sum_{\tau = 1}^{|\Omega|} \left[ \sum_{\pi \in \Part_{|\Omega| + \tau}} \frac{p^{\frac{\tau}{2}}}{r^{\frac{\tau}{2}}} \right] \right) \\
    &\leq \tilde{c}_{d} \Lambda \sum_{\Omega} \frac{1}{r^{\frac{|\Omega|}{2}}} \left( \sum_{\kappa = 1}^{|\Omega|} p^{\frac{\kappa}{2}} + p^{\frac{|\Omega|}{2}} \sum_{\tau = 1}^{|\Omega|} \frac{p^{\frac{\tau}{2}}}{r^{\frac{\tau}{2}}} \right)
\end{align*}
with $\tilde{c}_{d} = \mathring{c}_{d-1} \cdot \max_{\kappa \in [2d-2]} \genfrac{\{}{\}}{0pt}{}{2d-2}{\kappa}$. Here, $\genfrac{\{}{\}}{0pt}{}{2d-2}{\kappa}$ are Stirling's numbers of the second kind and $|\Part_\kappa(\Omega \oplus \Omega)| = \genfrac{\{}{\}}{0pt}{}{2|\Omega|}{\kappa} \leq \genfrac{\{}{\}}{0pt}{}{2d-2}{\kappa}$. The expression under the outer sum depends only on $|\Omega|$, and we can rewrite it
in terms of the binomial coefficients:
\begin{align*}
    \Norm{\Delta - \Expectation \Delta}{\Ell_p} &\leq \check{c}_{d} \Lambda \sum_{m = 1}^{d-1} \frac{1}{r^{\frac{m}{2}}} \left( \sum_{\kappa = 1}^{m} p^{\frac{\kappa}{2}} + p^{\frac{m}{2}} \sum_{\tau = 1}^{m} \frac{p^{\frac{\tau}{2}}}{r^{\frac{\tau}{2}}} \right)
\end{align*}
with $\check{c}_{d} = \tilde{c}_{d} \cdot \max_{m \in [d-1]} \genfrac{(}{)}{0pt}{1}{d-1}{m}$. Rearranging the first sum, we note that
\begin{equation*}
    \sum_{m = 1}^{d-1} \frac{1}{r^{\frac{m}{2}}} \sum_{\kappa = 1}^{m} p^{\frac{\kappa}{2}} = \sum_{\kappa = 1}^{d-1} p^{\frac{\kappa}{2}} \sum_{m = \kappa}^{d-1} \frac{1}{r^{\frac{m}{2}}} \leq \sum_{\kappa = 1}^{d-1} (d-\kappa) \frac{p^{\frac{\kappa}{2}}}{r^{\frac{\kappa}{2}}}.
\end{equation*}
For the second sum, a counting argument gives
\begin{equation*}
    \sum_{m = 1}^{d-1} \frac{p^{\frac{m}{2}}}{r^{\frac{m}{2}}} \sum_{\tau = 1}^{m} \frac{p^{\frac{\tau}{2}}}{r^{\frac{\tau}{2}}} = \sum_{\tau = 2}^{d} \lfloor\tfrac{\tau}{2}\rfloor \frac{p^{\frac{\tau}{2}}}{r^{\frac{\tau}{2}}} + \sum_{\tau = d+1}^{2d-2} \lfloor\tfrac{2d - \tau}{2}\rfloor\frac{p^{\frac{\tau}{2}}}{r^{\frac{\tau}{2}}},
\end{equation*}
which leads to the following estimate:
\begin{equation*}
    \sum_{m = 1}^{d-1} \frac{1}{r^{\frac{m}{2}}}\left( \sum_{\kappa = 1}^{m} p^{\frac{\kappa}{2}} + p^{\frac{m}{2}} \sum_{\tau = 1}^{m} \frac{p^{\frac{\tau}{2}}}{r^{\frac{\tau}{2}}} \right) \leq \sum_{\kappa=1}^{2d-2} (d - \lceil\tfrac{\kappa}{2}\rceil) \frac{p^{\frac{\kappa}{2}}}{r^{\frac{\kappa}{2}}} \leq (d-1) \sum_{\kappa = 1}^{2d-2} \frac{p^{\frac{\kappa}{2}}}{r^{\frac{\kappa}{2}}}.
\end{equation*}
Set $\hat{c}_{d} = \check{c}_{d} \cdot (d-1)$ to finish the proof.
\end{proof}

\begin{corollary}
\label{corollary:specific_tail_bound}
Let $\prod_{s=1}^{d-1} \eta_s = r^{-\frac{d-1}{2}}$, and let $e$ be Euler's number. For $\varepsilon~\in~(0,1)$,
\begin{equation*}
    \Prob[\Big]{|\Delta - \Expectation \Delta| \geq \varepsilon \cdot e(2d-2) \hat{c}_d L^{2d-2} \prod\nolimits_{t = 1}^{d} \Norm{\Matrix{W}_t}{\Frob} } \leq e \cdot \exp\left(  -r \varepsilon^2 \right).
\end{equation*} 
\end{corollary}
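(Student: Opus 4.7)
The plan is to convert the moment bound of \cref{theorem:specific_moments_bound} into a tail bound by applying Markov's inequality to the $p$-th moment of $|\Delta - \Expectation\Delta|$ and then optimizing the choice of $p \geq 1$. First, I would simplify the moment bound under the hypothesis $\prod_{s=1}^{d-1} \eta_s^2 = r^{-(d-1)}$: this immediately cancels the powers of $r$ in $\prod_{s=1}^{d-1}(\eta_s L \sqrt{r})^2 = L^{2d-2}$, leaving
\begin{equation*}
    \Norm{\Delta - \Expectation\Delta}{\Ell_p} \leq \hat{c}_d L^{2d-2} \prod\nolimits_{t = 1}^{d} \Norm{\Matrix{W}_t}{\Frob} \sum_{\kappa = 1}^{2d-2} (p/r)^{\kappa/2}.
\end{equation*}

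Next, I would choose $p = r\varepsilon^2$ whenever this value is at least $1$. Since $\varepsilon \in (0,1)$, each term of the sum satisfies $(p/r)^{\kappa/2} = \varepsilon^\kappa \leq \varepsilon$, so the sum is bounded by $(2d-2)\varepsilon$, giving
\begin{equation*}
    \Norm{\Delta - \Expectation\Delta}{\Ell_p} \leq (2d-2) \hat{c}_d L^{2d-2} \varepsilon \prod\nolimits_{t=1}^{d} \Norm{\Matrix{W}_t}{\Frob}.
\end{equation*}
Let $M$ denote the right-hand side above and set $t_\ast = e \cdot M$. By Markov's inequality applied to the nonnegative random variable $|\Delta - \Expectation\Delta|^p$,
\begin{equation*}
    \Prob{|\Delta - \Expectation\Delta| \geq t_\ast} \leq \frac{\Norm{\Delta - \Expectation\Delta}{\Ell_p}^p}{t_\ast^p} \leq e^{-p} = e^{-r\varepsilon^2},
\end{equation*}
which already beats the target $e \cdot e^{-r\varepsilon^2}$.

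Finally, I would handle the remaining regime $r\varepsilon^2 < 1$, where the choice $p = r\varepsilon^2 < 1$ is not admissible in \cref{theorem:specific_moments_bound}. In this case, no work is needed: the claimed right-hand side satisfies $e \cdot \exp(-r\varepsilon^2) > e \cdot e^{-1} = 1$, so the probability bound is trivially true. The main obstacle, though mild, is precisely this book-keeping: ensuring $p \geq 1$ for Markov, absorbing the boundary case into the factor $e$ in front of $\exp(-r\varepsilon^2)$, and tracking that $\varepsilon^\kappa \leq \varepsilon$ is what permits pulling a single $\varepsilon$ out of the $2d-2$ terms of the moment sum.
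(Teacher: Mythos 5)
Your proposal is correct. It differs from the paper's proof mainly in what it takes as the starting black box: the paper invokes \cite[Lemma~13]{bamberger2022hanson} to convert the moment bounds of \cref{theorem:specific_moments_bound} into a tail bound of the form $e\cdot\exp\bigl(-\min_{\kappa\in[2d-2]}[\,z/(\cdots r^{-\kappa/2})\,]^{2/\kappa}\bigr)$, then substitutes the specific threshold $z=\varepsilon\cdot e(2d-2)\hat{c}_d L^{2d-2}\prod_t\Norm{\Matrix{W}_t}{\Frob}$ and uses $\min_\kappa\varepsilon^{2/\kappa}\geq\varepsilon^2$ for $\varepsilon\in(0,1)$; you instead rederive the tail bound from scratch by Markov's inequality applied to $|\Delta-\Expectation\Delta|^p$ with the optimized choice $p=r\varepsilon^2$, using $\sum_{\kappa=1}^{2d-2}\varepsilon^\kappa\leq(2d-2)\varepsilon$ and disposing of the regime $r\varepsilon^2<1$ by noting the claimed bound exceeds one there. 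Since \cref{theorem:specific_moments_bound} holds for all real $p\geq 1$, your choice of a non-integer $p$ is admissible, and your book-keeping of the prefactor $e$ (as the Markov gap $t_\ast=eM$ in one regime, and as slack making the bound trivial in the other) is exactly the role it plays inside the cited lemma. What your route buys is a self-contained, elementary argument that in the regime $r\varepsilon^2\geq 1$ even gives the slightly sharper bound $\exp(-r\varepsilon^2)$ without the leading factor $e$; what the paper's route buys is brevity, delegating the moment-to-tail conversion (including the minimization over $\kappa$) to an existing general-purpose result.
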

\begin{proof}
We use \cite[Lemma~13]{bamberger2022hanson} to obtain a tail bound from the moment bounds of \cref{theorem:specific_moments_bound}. It states that $\Prob[\big]{|\Delta - \Expectation \Delta| \geq z }$ is upper bounded for every $z > 0$ by
\begin{multline*}
    e \cdot \exp\left( -\min_{\kappa \in [2d-2]} \left[\frac{z r^{\frac{\kappa}{2}}}{e(2d-2) \hat{c}_d \prod_{s=1}^{d-1}(\eta_s L \sqrt{r})^{2} \prod_{t = 1}^{d} \Norm{\Matrix{W}_t}{\Frob}}\right]^{\frac{2}{\kappa}} \right) \\
    = e \cdot \exp\left( -r \min_{\kappa \in [2d-2]} \left[\frac{z }{e(2d-2) \hat{C}_d L^{2d-2} \prod_{t = 1}^{d} \Norm{\Matrix{W}_t}{F}}\right]^{\frac{2}{\kappa}} \right).
\end{multline*}
Let $z = \varepsilon \cdot e(2d-2) \hat{c}_d L^{2d-2} \prod_{t = 1}^{d} \Norm{\Matrix{W}_t}{\Frob}$ with $\varepsilon \in (0,1)$; then $\Prob[\big]{|\Delta - \Expectation \Delta| \geq z } \leq e \cdot \exp( -r \min_{\kappa \in [2d-2]} \varepsilon^{2/\kappa} ) \leq e \cdot \exp\left(  -r \varepsilon^2 \right)$.
\end{proof}

\subsection{Proof of (\ref{eq:tt_bound_fixed_factorization})}
Recall that we had a tensor $\Tensor{A} \in \Real^{n_1 \times \cdots \times n_d}$ and its TT factorization $\Tensor{A} = \TT(\Tensor{G}_1, \ldots, \Tensor{G}_d)$ with $\Tensor{G}_s \in \Real^{k_{s-1} \times n_s \times k_s}$. We approximate it with a tensor $\Tensor{B} \in \Real^{n_1 \times \cdots \times n_d}$ such that $\Tensor{B} = \TT(\Tensor{H}_1, \ldots, \Tensor{H}_d)$ with $\Tensor{H}_1(1,i_1,:) = \Tensor{G}_1(1,i_1,:) \Matrix{R}_1$, $\Tensor{H}_s(:,i_s,:) = \Matrix{R}_{s-1}^\trans \Tensor{G}_s(:,i_s,:) \Matrix{R}_s$, and $\Tensor{H}_d(:,i_d,1) = \Matrix{R}_{d-1}^\trans \Tensor{G}_d(:,i_d,1)$. The matrices $\Matrix{R}_s \in \Real^{k_s \times r_s}$ and the values of $r_s \in \N$ are to be chosen.

Let $\xi$ be a sub-Gaussian random variable with zero mean and unit variance. Choose positive weights $\{ \eta_s \}_{s = 1}^{d-1}$ so that $\prod_{s=1}^{d-1} \eta_s = r^{-\frac{d-1}{2}}$, where $r = \max_{s \in [d-1]} r_s$, and populate each matrix $\Matrix{R}_s$ with independent copies of $\eta_s \xi$. We can apply \cref{corollary:specific_tail_bound} to every individual entry of $\Tensor{B}$ to conclude that, for $\epsilon \in (0,1)$,
\begin{equation*}
    \Prob[\Big]{|\Tensor{B}(i_1, \ldots, i_d) - \Expectation \Tensor{B}(i_1, \ldots, i_d)| \geq \epsilon \cdot \sqrt{c_d} \prod\nolimits_{s = 1}^{d} \Norm{\Tensor{G}_s(:, i_s, :)}{\Frob} } \leq e \cdot \exp\left( -r \epsilon^2 \right)
\end{equation*}
with $\sqrt{c_d} = e(2d-2) \hat{c}_d \Norm{\xi}{\psi_2}^{2d-2}$. Taking a union bound over all entries, we get
\begin{equation*}
    \Prob[\Big]{\Norm{\Tensor{B} - \Expectation \Tensor{B}}{\max} \geq \epsilon \cdot \sqrt{c_d} \prod\nolimits_{s = 1}^{d} \Norm{\Tensor{G}_s}{\Frob,\infty} } \leq e \prod\nolimits_{s = 1}^{d} n_d \cdot \exp\left( -r \epsilon^2 \right).
\end{equation*}
Assume that $r \geq \log(2e \prod_{s = 1}^{d} n_s) / \epsilon^2$. Then the above probability is at most $1/2$, and there exist (full-rank) matrices $\Matrix{R}_s$ such that
\begin{equation*}
    \Norm{\Tensor{B} - \Expectation \Tensor{B}}{\max} \leq \epsilon \cdot \sqrt{c_d} \prod\nolimits_{s = 1}^{d} \Norm{\Tensor{G}_s}{\Frob,\infty}.
\end{equation*}
Let $\epsilon \leq 1 / \sqrt{c_d}$ and write it as $\epsilon = \varepsilon / \sqrt{c_d}$ for $\varepsilon \in (0,1)$. This brings the error and rank bounds into the sought form. Next, note that $\Expectation [\Matrix{R}_s \Matrix{R}_s^\trans] = \eta_s^2 r_s \Matrix{I}_{k_s}$ and
\begin{equation*}
    \Expectation \Tensor{B} = \prod\nolimits_{s = 1}^{d} \eta_s^2 r_s \cdot \Tensor{A} = \frac{1}{r^{d-1}} \prod\nolimits_{s = 1}^{d-1} r_s \cdot \Tensor{A}.
\end{equation*}
So we must choose $r_1 = \cdots = r_{d-1} = r$ to have $\Expectation \Tensor{B} = \Tensor{A}$. This proves \cref{eq:tt_bound_fixed_factorization} and, together with the argument in \Cref{sec:proof_overview}, finishes the proof of \cref{theorem:tt_approx}.
\section{Error bound via partially orthogonal tensor-train factorizations}
\label{sec:core_coherence}

\subsection{Minimal partially orthogonal factorizations}
Let $\Tensor{A} \in \Real^{n_1 \times \cdots \times n_d}$. For every $s \in [d-1]$, let $\Matrix{A}^{<s>} \in \Real^{(n_1 \ldots n_s) \times (n_{s+1} \ldots n_d)}$ denote its $s$th unfolding matrix. We shall call $\Matrix{A}^{<} = \Matrix{A}^{<d-1>}$ and $\Matrix{A}^{>} = \Matrix{A}^{<1>}$ the left and right unfoldings, respectively.

Consider a TT factorization $\Tensor{A} = \TT(\Tensor{G}_1, \ldots, \Tensor{G}_d)$ of TT rank $(k_1, \ldots, k_{d-1})$. The unfoldings of $\Tensor{A}$ can be factorized as $\Matrix{A}^{<s>} = \Matrix{A}_{\leq s} \Matrix{A}_{\geq s+1}$ with \cite{holtz2012manifolds, budzinskiy2023tensor}
\begin{equation}
\label{eq:interface_matrices}
\begin{aligned}
    \Matrix{A}_{\leq s} &= (\Matrix{I}_{n_s} \otimes \Matrix{A}_{\leq s-1}) \Matrix{G}_s^{<} \in \Real^{(n_1 \ldots n_s) \times k_s}, & \Matrix{A}_{\leq 0} &= 1, \\
    \Matrix{A}_{\geq s+1} &= \Matrix{G}_{s+1}^{>} (\Matrix{I}_{n_{s+1}} \otimes \Matrix{A}_{\geq s+2}) \in \Real^{k_s \times (n_{s+1} \ldots n_d)}, & \Matrix{A}_{\geq d+1} &= 1.
\end{aligned}
\end{equation}
It follows that $\ttrank{\Tensor{A}} = (\rank{\Matrix{A}^{<1>}}, \ldots, \rank{\Matrix{A}^{<d-1>}})$ \cite{oseledets2011tensor, holtz2012manifolds}.
A TT factorization is called minimal if the unfoldings $\Matrix{G}_s^{<}$ and $\Matrix{G}_s^{>}$ are full-rank for every TT core $\Tensor{G}_s$. This is equivalent to $(k_1, \ldots, k_{d-1}) = \ttrank{\Tensor{A}}$ \cite{holtz2012manifolds}.

A tensor is said to be left-orthogonal if its left unfolding has orthonormal columns and right-orthogonal if its right unfolding has orthonormal rows. A TT factorization is called $t$-orthogonal for $t \in [d]$ if the TT cores $\{ \Tensor{G}_s \}_{s = 1}^{t-1}$ are left-orthogonal and $\{ \Tensor{G}_s \}_{s = t+1}^{d}$ are right-orthogonal---by \cref{eq:interface_matrices}, in this case $\Matrix{A}_{\leq t - 1}$ has orthonormal columns and $\Matrix{A}_{\geq t + 1}$ has orthonormal rows. We will refer to $1$-orthogonal TT factorizations as right-orthogonal and $d$-orthogonal as left-orthogonal.

All minimal $t$-orthogonal TT factorizations of a tensor can be transformed into one another with orthogonal matrices.

\begin{lemma}
\label{lemma:equivalent_orthogonal_tt_factorizations}
Let $\Tensor{A} \in \Real^{n_1 \times \cdots \times n_d}$ have $\ttrank{\Tensor{A}} = (r_1, \ldots, r_{d-1})$ and consider its minimal TT factorizations: left-orthogonal $\{ \Tensor{U}_s \}_{s = 1}^{d}$, right-orthogonal $\{ \Tensor{V}_s \}_{s = 1}^{d}$, and $t$-orthogonal $\{ \Tensor{T}_s \}_{s = 1}^{d}$ for $t \in [d]$. There exist orthogonal matrices $\{ \Matrix{Q}_s \}_{s = 1}^{d-1}$ of size $\Matrix{Q}_s \in \Real^{r_s \times r_s}$ such that
\begin{align*}
    \Matrix{T}_s^{<} &= (\Matrix{I}_{n_s} \otimes \Matrix{Q}_{s-1}^\trans) \Matrix{U}_s^{<} \Matrix{Q}_s, \quad s = 1, \ldots, t-1, \\
    \Matrix{T}_s^{>} &= \Matrix{Q}_{s-1}^\trans \Matrix{V}_s^{>} (\Matrix{I}_{n_s} \otimes \Matrix{Q}_{s}),\quad s = t+1, \ldots, d,
\end{align*}
where $\Matrix{Q}_0 = \Matrix{Q}_d = 1$, and one of the following three conditions holds:
\begin{align*}
    \Matrix{T}_{t}^{<} &= (\Matrix{I}_{n_t} \otimes \Matrix{Q}_{t-1}^\trans) \Matrix{U}_t^{<}, & & & t = d, \\
    & & \Matrix{T}_{t}^{>} &= \Matrix{V}_t^{>} (\Matrix{I}_{n_t} \otimes \Matrix{Q}_{t}), & t = 1, \\
    \mathrm{col}(\Matrix{T}_t^{<}) &= \mathrm{col}\big((\Matrix{I}_{n_t} \otimes \Matrix{Q}_{t-1}^\trans) \Matrix{U}_t^{<}\big), & \mathrm{row}(\Matrix{T}_t^{>}) &= \mathrm{row} \big(\Matrix{V}_t^{>} (\Matrix{I}_{n_t} \otimes \Matrix{Q}_{t})\big), & 1 < t < d.
\end{align*}
\end{lemma}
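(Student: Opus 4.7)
The plan is to construct the orthogonal matrices $\{\Matrix{Q}_s\}_{s=1}^{d-1}$ directly from the column and row spaces of the interface matrices of the three factorizations, and then push the resulting identities into the core unfoldings via the recurrence \cref{eq:interface_matrices}. Because all three factorizations are minimal, their TT ranks coincide with $\ttrank{\Tensor{A}} = (r_1, \ldots, r_{d-1})$, and on each cut $s$ the interface matrices $\Matrix{U}_{\leq s}$, $\Matrix{V}_{\geq s+1}$, $\Matrix{T}_{\leq s}$, $\Matrix{T}_{\geq s+1}$ have full column or row rank equal to $r_s$.

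I would first define $\Matrix{Q}_s$ for $s < t$ by observing that both $\Matrix{U}_{\leq s}$ and $\Matrix{T}_{\leq s}$ have orthonormal columns (the left-orthogonality of $\{\Tensor{U}_\tau\}_{\tau=1}^{d-1}$ and of $\{\Tensor{T}_\tau\}_{\tau=1}^{t-1}$) and span the same $r_s$-dimensional subspace $\mathrm{col}(\Matrix{A}^{<s>})$; hence there is a unique orthogonal $\Matrix{Q}_s \in \Real^{r_s \times r_s}$ with $\Matrix{T}_{\leq s} = \Matrix{U}_{\leq s}\Matrix{Q}_s$. For $s \geq t$, I would symmetrically use that $\Matrix{V}_{\geq s+1}$ and $\Matrix{T}_{\geq s+1}$ both have orthonormal rows spanning $\mathrm{row}(\Matrix{A}^{<s>})$ to obtain a unique orthogonal $\Matrix{Q}_s$ with $\Matrix{T}_{\geq s+1} = \Matrix{Q}_s^\trans \Matrix{V}_{\geq s+1}$. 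Setting $\Matrix{Q}_0 = \Matrix{Q}_d = 1$ completes the list.

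Next, I would transfer these interface identities to the cores via \cref{eq:interface_matrices}. For $1 \leq s \leq t-1$, substituting $\Matrix{T}_{\leq s} = \Matrix{U}_{\leq s}\Matrix{Q}_s$ and $\Matrix{T}_{\leq s-1} = \Matrix{U}_{\leq s-1}\Matrix{Q}_{s-1}$ into the recurrence yields
\begin{equation*}
(\Matrix{I}_{n_s} \otimes \Matrix{U}_{\leq s-1})(\Matrix{I}_{n_s} \otimes \Matrix{Q}_{s-1})\Matrix{T}_s^{<} = (\Matrix{I}_{n_s} \otimes \Matrix{U}_{\leq s-1})\Matrix{U}_s^{<}\Matrix{Q}_s.
\end{equation*}
Since $\Matrix{U}_{\leq s-1}$ has orthonormal columns (trivially so when $s=1$), so does $\Matrix{I}_{n_s} \otimes \Matrix{U}_{\leq s-1}$, and left-multiplying by its transpose peels it off and delivers $\Matrix{T}_s^{<} = (\Matrix{I}_{n_s} \otimes \Matrix{Q}_{s-1}^\trans)\Matrix{U}_s^{<}\Matrix{Q}_s$. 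The identity for $t+1 \leq s \leq d$ follows by the mirror argument applied to the right recurrence, using right-invertibility of $\Matrix{I}_{n_s} \otimes \Matrix{V}_{\geq s+2}$.

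The main obstacle is the boundary case $s = t$, where neither $\Matrix{T}_{\leq t}$ nor $\Matrix{T}_{\geq t}$ has a partial-orthogonality property. For $t = d$ the identity $\Matrix{T}_{\leq d} = \Matrix{U}_{\leq d}$ is forced (both are $\mathrm{vec}(\Tensor{A})$), and the same peel-off produces the equality $\Matrix{T}_d^{<} = (\Matrix{I}_{n_d} \otimes \Matrix{Q}_{d-1}^\trans)\Matrix{U}_d^{<}$; the case $t = 1$ is symmetric. For $1 < t < d$, one only has $\mathrm{col}(\Matrix{T}_{\leq t}) = \mathrm{col}(\Matrix{U}_{\leq t}) = \mathrm{col}(\Matrix{A}^{<t>})$, so the peel-off argument delivers only the column-space equality $\mathrm{col}((\Matrix{I}_{n_t} \otimes \Matrix{Q}_{t-1})\Matrix{T}_t^{<}) = \mathrm{col}(\Matrix{U}_t^{<})$; applying the invertible transformation $\Matrix{I}_{n_t} \otimes \Matrix{Q}_{t-1}^\trans$ to both sides then yields the weakened middle statement, and the row-space claim for $\Matrix{T}_t^{>}$ follows analogously from $\Matrix{T}_{\geq t+1} = \Matrix{Q}_t^\trans \Matrix{V}_{\geq t+1}$.
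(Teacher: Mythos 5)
Your argument is correct, but it follows a genuinely different route from the paper's proof. The paper disposes of the cases $t = 1$ and $t = d$ by directly invoking the uniqueness result for minimal left-/right-orthogonal TT factorizations (Theorem~1(b) of Holtz--Rohwedder--Schneider), and for $1 < t < d$ it reshapes $\Tensor{A}$ into an order-$(t+1)$ tensor, introduces an auxiliary left-orthogonal core $\Tensor{P}_t$ whose left unfolding spans $\mathrm{col}(\Matrix{T}_t^{<})$, and applies the cited theorem to two minimal left-orthogonal factorizations of the reshaped tensor (and mirrors this on the right). You instead construct the matrices $\Matrix{Q}_s$ explicitly as the orthogonal change of basis between the interface matrices --- $\Matrix{T}_{\leq s} = \Matrix{U}_{\leq s}\Matrix{Q}_s$ for $s < t$ and $\Matrix{T}_{\geq s+1} = \Matrix{Q}_s^\trans \Matrix{V}_{\geq s+1}$ for $s \geq t$, both legitimate because minimality forces the interfaces to have full rank $r_s$ and to span $\mathrm{col}(\Matrix{A}^{<s>})$, resp.\ $\mathrm{row}(\Matrix{A}^{<s>})$ --- and then push these identities into the core unfoldings by peeling off the injective factor $\Matrix{I}_{n_s}\otimes\Matrix{U}_{\leq s-1}$ (resp.\ the surjective factor $\Matrix{I}_{n_s}\otimes\Matrix{V}_{\geq s+1}$) in \cref{eq:interface_matrices}; the boundary core at $s=t$ correctly degrades to the column-/row-space statements because there only an invertible, not orthogonal, change of basis between $\Matrix{T}_{\leq t}$ and $\Matrix{U}_{\leq t}$ is available. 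In effect you re-prove, in a self-contained way, the uniqueness statement that the paper imports, and your argument treats all $t\in[d]$ uniformly without the reshaping trick or the auxiliary core; the paper's version is shorter given the citation. One cosmetic slip: in the mirror step the factor you peel off is $\Matrix{I}_{n_s}\otimes\Matrix{V}_{\geq s+1}$, not $\Matrix{I}_{n_s}\otimes\Matrix{V}_{\geq s+2}$, with the indexing of \cref{eq:interface_matrices}; this does not affect the argument.
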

\begin{proof}
If $t = 1$ or $t = d$, the statement is exactly \cite[Theorem~1(b)]{holtz2012manifolds}. Let $1 < t < d$ and rearrange $\Tensor{A}$ into an order-$(t+1)$ tensor $\Tensor{\widehat{A}}$ of size $n_1 \times \cdots \times n_t \times (n_{t+1} \ldots n_d)$. Note that $\ttrank{\Tensor{\widehat{A}}} = (r_1, \ldots, r_t)$ because the unfoldings of $\Tensor{\widehat{A}}$ coincide with those of $\Tensor{A}$. Consider tensors (essentially matrices) $\Tensor{G}_{t+1}, \Tensor{H}_{t+1} \in \Real^{r_t \times (n_{t+1} \ldots n_d) \times 1}$ obtained by recursive multiplication of TT cores $\{ \Tensor{U}_s \}_{s = t+1}^{d}$ and $\{ \Tensor{T}_s \}_{s = t+1}^{d}$, respectively, as in the second row of \cref{eq:interface_matrices}. Define a left-orthogonal $\Tensor{P}_t \in \Real^{r_{t-1} \times n_t \times r_t}$ by taking $\Matrix{P}_t^{<}$ from the QR decomposition $\Matrix{T}_t^{<} = \Matrix{P}_t^{<} \Matrix{R}$. Then $\Tensor{\widehat{A}} = \TT(\Tensor{U}_1, \ldots, \Tensor{U}_t, \Tensor{G}_{t+1}) = \TT(\Tensor{T}_1, \ldots, \Tensor{T}_{t-1}, \Tensor{P}_t, \Matrix{R}\Tensor{H}_{t+1})$ are minimal left-orthogonal TT factorizations, and \cite[Theorem~1(b)]{holtz2012manifolds} guarantees that the sought orthogonal matrices $\{ \Matrix{Q}_s \}_{s = 1}^{t-1}$ exist and $\Matrix{P}_t^{<} = (\Matrix{I}_{n_t} \otimes \Matrix{Q}_{t-1}^\trans) \Matrix{U}_t^{<} \Matrix{\tilde{Q}}_t$ for an orthogonal $\Matrix{\tilde{Q}}_t$. A similar argument for the right-orthogonal factorizations shows that $\{ \Matrix{Q}_s \}_{s = t}^{d-1}$ exist too.
\end{proof}

Minimal left- and right-orthogonal TT factorizations can be computed for every tensor with the TT-SVD algorithm \cite{oseledets2011tensor, holtz2012manifolds}; to get a minimal $t$-orthogonal factorization, TT-SVD can be followed with a partial sweep of QR (or LQ) orthogonalizations.

\subsection{Tensor-train core coherences}
Let $\mathrm{M} \subseteq \Real^{m}$ be a $q$-dimensional subspace with an orthonormal basis given by the columns of $\Matrix{Q} \in \Real^{m \times q}$. Its coherence is \cite{candes2009exact}
\begin{equation*}
    \mu(\mathrm{M}) = \tfrac{m}{q} \max_{i \in [m]} \Norm{\Matrix{Q} \Matrix{Q}^\trans \Matrix{e}_i }{2}^2 = \tfrac{m}{q} \Norm{\Matrix{Q}}{2,\infty}^2
\end{equation*}
and is independent of the choice of the orthonormal basis $\Matrix{Q}$. We can associate two values of coherence with every matrix $\Matrix{A}$, that of its column and row spaces:
\begin{equation*}
    \mu_{<}(\Matrix{A}) = \mu(\mathrm{col}(\Matrix{A})), \quad \mu_{>}(\Matrix{A}) = \mu(\mathrm{row}(\Matrix{A})).
\end{equation*}

Assume that $m$ is divisible by $p \in \N$, $n = \tfrac{m}{p}$, and let $\Norm{\cdot}{\mathrm{u}}$ be a unitarily invariant matrix norm \cite{horn1994topics}. We define the $p$-block coherence of $\mathrm{M}$ with respect to $\Norm{\cdot}{\mathrm{u}}$ as
\begin{equation*}
    \mu_{\mathrm{u}, p}(\mathrm{M}) = \tfrac{m}{q} \max_{i \in [n]} \Norm[\Big]{\Matrix{Q}^\trans \begin{bmatrix}
        \Matrix{e}_{1 + (i-1)p} & \Matrix{e}_{2 + (i-1)p} & \cdots & \Matrix{e}_{p + (i-1)p}
    \end{bmatrix} }{\mathrm{u}}^2.
\end{equation*}
This quantity is also invariant to the choice of $\Matrix{Q}$ since the norm is unitarily invariant. We use $p$-block coherence to define left and right coherences of a tensor $\Tensor{G} \in \Real^{p \times n \times q}$:
\begin{equation*}
    \mu_{\mathrm{u}, <}(\Tensor{G}) = \mu_{\mathrm{u},p}(\mathrm{col}(\Matrix{G}^{<})), \quad \mu_{\mathrm{u},>}(\Tensor{G}) = \mu_{\mathrm{u},q}(\mathrm{row}(\Matrix{G}^{>})).
\end{equation*}

\begin{lemma}
\label{lemma:lr_coherence_orthogonal_core}
Let $\Tensor{U}, \Tensor{V} \in \Real^{p \times n \times q}$ be left- and right-orthogonal, respectively. Then
\begin{equation*}
    \mu_{\mathrm{u},<}(\Tensor{U}) = \tfrac{np}{q} \Norm{\Tensor{U}}{\mathrm{u}, \infty}^2, \quad \mu_{\mathrm{u},>}(\Tensor{V}) = \tfrac{nq}{p} \Norm{\Tensor{V}}{\mathrm{u}, \infty}^2.
\end{equation*}
\end{lemma}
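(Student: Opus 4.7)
The plan is to exploit the fact that when a TT core is left- or right-orthogonal, its left or right unfolding already provides an orthonormal basis for the relevant subspace, so the block coherence can be evaluated directly. Since the value of $\mu_{\mathrm{u}, p}(\mathrm{M})$ is independent of the choice of orthonormal basis $\Matrix{Q}$, I would simply substitute $\Matrix{Q} = \Matrix{U}^{<}$ in the left case and $\Matrix{Q} = (\Matrix{V}^{>})^\trans$ in the right case and identify the resulting expressions with slice-wise norms of the tensor.

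For $\mu_{\mathrm{u}, <}(\Tensor{U})$, note that $\mathrm{col}(\Matrix{U}^{<}) \subset \Real^{pn}$ has dimension $q$ and the ambient dimension $m = pn$ is divisible by $p$ with $n = m/p$, so the formula for $p$-block coherence applies. I would take $\Matrix{Q} = \Matrix{U}^{<}$ as the orthonormal basis and observe that, in the unfolding convention of \cref{eq:interface_matrices}, the $i$th block of $p$ consecutive rows of $\Matrix{U}^{<}$ is exactly $\Tensor{U}(:, i, :) \in \Real^{p \times q}$. Hence the product $(\Matrix{U}^{<})^\trans [\Matrix{e}_{1+(i-1)p}, \ldots, \Matrix{e}_{ip}]$ equals $\Tensor{U}(:, i, :)^\trans \in \Real^{q \times p}$. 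Since $\Norm{\cdot}{\mathrm{u}}$ is unitarily invariant, it depends only on singular values and satisfies $\Norm{\Matrix{M}^\trans}{\mathrm{u}} = \Norm{\Matrix{M}}{\mathrm{u}}$, so
\begin{equation*}
    \mu_{\mathrm{u}, <}(\Tensor{U}) = \tfrac{pn}{q} \max_{i \in [n]} \Norm{\Tensor{U}(:, i, :)^\trans}{\mathrm{u}}^2 = \tfrac{pn}{q} \max_{i \in [n]} \Norm{\Tensor{U}(:, i, :)}{\mathrm{u}}^2 = \tfrac{pn}{q} \Norm{\Tensor{U}}{\mathrm{u}, \infty}^2.
\end{equation*}

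For $\mu_{\mathrm{u}, >}(\Tensor{V})$, the subspace $\mathrm{row}(\Matrix{V}^{>}) \subset \Real^{nq}$ has dimension $p$ and the ambient dimension $m = nq$ is divisible by $q$ with $n = m/q$. Because $\Matrix{V}^{>}$ has orthonormal rows, $(\Matrix{V}^{>})^\trans$ has orthonormal columns and serves as $\Matrix{Q}$. Then $\Matrix{Q}^\trans [\Matrix{e}_{1+(i-1)q}, \ldots, \Matrix{e}_{iq}] = \Matrix{V}^{>} [\Matrix{e}_{1+(i-1)q}, \ldots, \Matrix{e}_{iq}]$ selects the $i$th block of $q$ consecutive columns of $\Matrix{V}^{>}$, which is precisely $\Tensor{V}(:, i, :) \in \Real^{p \times q}$. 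No transpose arises this time, so
\begin{equation*}
    \mu_{\mathrm{u}, >}(\Tensor{V}) = \tfrac{nq}{p} \max_{i \in [n]} \Norm{\Tensor{V}(:, i, :)}{\mathrm{u}}^2 = \tfrac{nq}{p} \Norm{\Tensor{V}}{\mathrm{u}, \infty}^2.
\end{equation*}

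There is no real obstacle beyond bookkeeping: the key points are that the unfolding reshape aligns the block selector $[\Matrix{e}_{1+(i-1)p}, \ldots, \Matrix{e}_{ip}]$ (resp.\ $[\Matrix{e}_{1+(i-1)q}, \ldots, \Matrix{e}_{iq}]$) with the lateral slice $\Tensor{U}(:, i, :)$ (resp.\ $\Tensor{V}(:, i, :)$), and that unitary invariance allows a free transpose in the left case where $(\Matrix{U}^{<})^\trans$ rather than $\Matrix{U}^{<}$ enters the definition.
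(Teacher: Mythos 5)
Your proposal is correct and follows essentially the same route as the paper: substitute the orthonormal unfolding ($\Matrix{U}^{<}$, resp.\ $(\Matrix{V}^{>})^\trans$) as the basis $\Matrix{Q}$ in the block-coherence definition, identify each block selector with the slice $\Tensor{U}(:,i,:)$ (resp.\ $\Tensor{V}(:,i,:)$), and use unitary invariance to discard the transpose. You merely spell out the unfolding bookkeeping that the paper leaves implicit.
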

\begin{proof}
The columns of $\Matrix{U}^{<}$ are an orthonormal basis of $\mathrm{col}(\Matrix{U}^{<})$. Then
\begin{equation*}
    \mu_{\mathrm{u},<}(\Tensor{U}) = \tfrac{np}{q} \max_{i \in [n]} \Norm{\Tensor{U}(:,i,:)^\trans}{\mathrm{u}}^2 = \tfrac{np}{q} \max_{i \in [n]} \Norm{\Tensor{U}(:,i,:)}{\mathrm{u}}^2 = \tfrac{np}{q} \Norm{\Tensor{U}}{\mathrm{u}, \infty}^2.
\end{equation*}
The second equality holds since $\Norm{\cdot}{\mathrm{u}}$ is unitarily invariant. The proof for $\Tensor{V}$ is similar.
\end{proof}

\begin{lemma}
\label{lemma:tt_core_coherence_invariant}
Consider minimal TT factorizations of a tensor: left-ortho\-go\-nal $\{ \Tensor{U}_s \}_{s = 1}^{d}$, right-orthogonal $\{ \Tensor{V}_s \}_{s = 1}^{d}$, and $t$-orthogonal $\{ \Tensor{T}_s \}_{s = 1}^{d}$ for $t \in [d]$. Then
\begin{align*}
    \mu_{\mathrm{u},<}(\Tensor{T}_s) &= \mu_{\mathrm{u},<}(\Tensor{U}_s), \quad s = 1, \ldots, \min\{t, d-1\}, \\
    \mu_{\mathrm{u},>}(\Tensor{T}_s) &= \mu_{\mathrm{u},>}(\Tensor{V}_s), \quad s = \max\{2,t\}, \ldots, d.
\end{align*}
\end{lemma}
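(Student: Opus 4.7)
The strategy is to invoke \cref{lemma:equivalent_orthogonal_tt_factorizations} to express $\mathrm{col}(\Matrix{T}_s^<)$ as the image of $\mathrm{col}(\Matrix{U}_s^<)$ under a block-Kronecker orthogonal transformation, and then to observe that such transformations leave the block coherence invariant. A symmetric argument handles the right-coherence claim.

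I would first record the following lightweight fact. If $\Matrix{P} \in \Real^{p \times p}$ is orthogonal and $\mathrm{M}, \mathrm{M}'$ are subspaces of $\Real^{np}$ with $\mathrm{M}' = (\Matrix{I}_n \otimes \Matrix{P}) \mathrm{M}$, then $\mu_{\mathrm{u}, p}(\mathrm{M}') = \mu_{\mathrm{u}, p}(\mathrm{M})$. Indeed, if $\Matrix{Q}$ is an orthonormal basis of $\mathrm{M}$ then $(\Matrix{I}_n \otimes \Matrix{P}) \Matrix{Q}$ is an orthonormal basis of $\mathrm{M}'$; the Kronecker identity $(\Matrix{I}_n \otimes \Matrix{P}^\trans) \Matrix{E}_i = \Matrix{E}_i \Matrix{P}^\trans$, applied to the block-column selectors $\Matrix{E}_i = [\Matrix{e}_{1+(i-1)p}, \ldots, \Matrix{e}_{p+(i-1)p}]$, together with the unitary invariance of $\Norm{\cdot}{\mathrm{u}}$, gives $\Norm{[(\Matrix{I}_n \otimes \Matrix{P})\Matrix{Q}]^\trans \Matrix{E}_i}{\mathrm{u}} = \Norm{\Matrix{Q}^\trans \Matrix{E}_i}{\mathrm{u}}$ for each $i$.

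For the left-coherence claim, \cref{lemma:equivalent_orthogonal_tt_factorizations} supplies $\Matrix{T}_s^< = (\Matrix{I}_{n_s} \otimes \Matrix{Q}_{s-1}^\trans) \Matrix{U}_s^< \Matrix{Q}_s$ for $s = 1, \ldots, t-1$, and the orthogonality of $\Matrix{Q}_s$ then yields $\mathrm{col}(\Matrix{T}_s^<) = (\Matrix{I}_{n_s} \otimes \Matrix{Q}_{s-1}^\trans) \mathrm{col}(\Matrix{U}_s^<)$, which allows the auxiliary fact to finish those indices with $p = r_{s-1}$ and $n = n_s$. When $1 < t < d$, the lemma states exactly the same column-space equation at $s = t$, so this case is identical. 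The boundary case $s = t = 1$ (which can only occur when the range $s = 1, \ldots, \min\{t, d-1\}$ is $\{ 1 \}$) is settled by a minimality argument: $\Matrix{A}^{<1>} = \Matrix{A}_{\leq 1}\Matrix{A}_{\geq 2}$ with $\Matrix{A}_{\leq 1}$ of full column rank in any minimal factorization gives $\mathrm{col}(\Matrix{T}_1^<) = \mathrm{col}(\Matrix{A}^{<1>}) = \mathrm{col}(\Matrix{U}_1^<)$.

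The right-coherence claim proceeds symmetrically: for $s = t+1, \ldots, d$ the relation $\Matrix{T}_s^> = \Matrix{Q}_{s-1}^\trans \Matrix{V}_s^>(\Matrix{I}_{n_s} \otimes \Matrix{Q}_s)$ yields $\mathrm{row}(\Matrix{T}_s^>) = \mathrm{row}(\Matrix{V}_s^>)(\Matrix{I}_{n_s} \otimes \Matrix{Q}_s)$, for $s = t$ with $1 < t < d$ the corresponding row-space identity is part of the lemma, and the boundary case $s = t = d$ is handled by the same minimality identity applied to $\Matrix{A}^{<d-1>}$. The only point requiring care is using the auxiliary fact with the correct block size, namely $p = r_{s-1}$ in the left case and $p = r_s$ in the right case. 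I do not foresee a substantive obstacle; the main difficulty is simply the bookkeeping between the three sub-cases of \cref{lemma:equivalent_orthogonal_tt_factorizations} and the two boundary indices $s = 1$ and $s = d$.
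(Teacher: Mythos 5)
Your proposal is correct and follows essentially the same route as the paper: the generic indices are handled via \cref{lemma:equivalent_orthogonal_tt_factorizations} together with unitary invariance (your auxiliary fact about invariance of block coherence under $\Matrix{I}_n \otimes \Matrix{P}$ is just a direct way of packaging what the paper does through the definition and \cref{lemma:lr_coherence_orthogonal_core}), and the boundary cases $s=t=1$ and $s=t=d$ are settled, as in the paper, by the interface-matrix factorization \cref{eq:interface_matrices} and minimality, which force $\mathrm{col}(\Matrix{T}_1^{<})=\mathrm{col}(\Matrix{U}_1^{<})$ and $\mathrm{row}(\Matrix{T}_d^{>})=\mathrm{row}(\Matrix{V}_d^{>})$.
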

\begin{proof}
    When $1 < s < d$, or $s = 1$ and $t > 1$, or $s = d$ and $t < d$, use the definition of left and right coherences, \cref{lemma:equivalent_orthogonal_tt_factorizations,lemma:lr_coherence_orthogonal_core}, and that unitary invariance of $\Norm{\cdot}{\mathrm{u}}$. If $s = t = 1$, then it follows from \cref{eq:interface_matrices} that the first unfolding of the tensor can be factorized as $\Matrix{U}_1^{<} \Matrix{X} = \Matrix{T}_1^{<} \Matrix{Y}$ with full-rank $\Matrix{X}$ and $\Matrix{Y}$. Then $\mathrm{col}(\Matrix{U}_1^{<}) = \mathrm{col}(\Matrix{T}_1^{<})$ and $\mu_{\mathrm{u},<}(\Tensor{T}_1) = \mu_{\mathrm{u},<}(\Tensor{U}_1)$. A similar argument applies to the case $s = t = d$.
\end{proof}

\cref{lemma:tt_core_coherence_invariant} shows that for a minimal $t$-orthogonal TT factorization, the left coherence of each left-orthogonal TT core and the right coherence of each right-orthogonal TT core depends \textit{on the tensor itself} and not on the specific factorization.

This motivates the following definition. Let $\Tensor{A}$ be an order-$d$ tensor with a minimal left-orthogonal TT factorization $\{ \Tensor{U}_s \}_{s = 1}^{d}$ and a minimal right-orthogonal TT factorization $\{ \Tensor{V}_s \}_{s = 1}^{d}$. We define the $s$th \textit{left and right TT core coherences} of $\Tensor{A}$ with respect to a unitarily invariant norm $\Norm{\cdot}{\mathrm{u}}$ as
\begin{align*}
    \mu_{\mathrm{u},<}^{(s)}(\Tensor{A}) &= \mu_{\mathrm{u},<}(\Tensor{U}_s), \quad s = 1, \ldots, d-1, \\
    \mu_{\mathrm{u},>}^{(s)}(\Tensor{A}) &= \mu_{\mathrm{u},>}(\Tensor{V}_s), \quad s = 2, \ldots, d.   
\end{align*}

\begin{remark}
TT core coherences were introduced in \cite{budzinskiy2023tensor} for the spectral norm in the context of TT completion. We extend them to arbitrary unitarily invariant norms.
\end{remark}

\subsection{Entrywise tensor-train approximation}
We can obtain an upper bound on $\gamma_{\mathrm{u}}^{\TT}(\Tensor{A})$ from \cref{eq:gammatt} in terms of the TT core coherences of $\Tensor{A}$.

\begin{lemma}
\label{lemma:gammatt_bound_via_tt_coherence}
Let $\Tensor{A} = \TT(\Tensor{T}_1, \ldots, \Tensor{T}_d) \in \Real^{n_1 \times \cdots \times n_d}$ be a minimal $t$-orthogonal TT factorization for $t \in [d]$. Let $\ttrank{A} = (r_1, \ldots, r_{d-1})$ and $r_0 = r_d = 1$. Then
\begin{align*}
    \Norm{\Tensor{T}_s}{\mathrm{u},\infty} &= \sqrt{\tfrac{r_s}{n_s r_{s-1}} \mu_{\mathrm{u},<}^{(s)}(\Tensor{A})}, & s = 1, \ldots, t-1, \\
    \Norm{\Tensor{T}_s}{\mathrm{u},\infty} &= \sqrt{\tfrac{r_{s-1}}{n_s r_s} \mu_{\mathrm{u},>}^{(s)}(\Tensor{A})}, & s = t+1, \ldots, d, \\
    \Norm{\Tensor{T}_t}{\mathrm{u},\infty} &\leq \Norm{\Matrix{A}^{<t>}}{2} \sqrt{\tfrac{r_t}{n_t r_{t-1}} \mu_{\mathrm{u},<}^{(t)}(\Tensor{A})}, & t < d, \\
    \Norm{\Tensor{T}_t}{\mathrm{u},\infty} &\leq  \Norm{\Matrix{A}^{<t-1>}}{2} \sqrt{\tfrac{r_{t-1}}{n_t r_t} \mu_{\mathrm{u},>}^{(t)}(\Tensor{A})}, & t > 1.
\end{align*}
\end{lemma}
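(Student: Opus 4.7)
The first two equalities are direct consequences of \cref{lemma:lr_coherence_orthogonal_core}: since the cores $\Tensor{T}_s$ with $s < t$ are left-orthogonal and those with $s > t$ are right-orthogonal, applying the lemma to $\Tensor{T}_s \in \Real^{r_{s-1} \times n_s \times r_s}$ expresses the respective core coherences in terms of $\Norm{\Tensor{T}_s}{\mathrm{u},\infty}^2$. Solving for $\Norm{\Tensor{T}_s}{\mathrm{u},\infty}$ and replacing $\mu_{\mathrm{u},<}(\Tensor{T}_s)$ with $\mu_{\mathrm{u},<}^{(s)}(\Tensor{A})$ (and analogously on the right) via \cref{lemma:tt_core_coherence_invariant} yields the two equalities.

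For the third inequality, my plan is to pull $\Tensor{T}_t$ back onto the core $\Tensor{U}_t$ of the minimal left-orthogonal factorization of $\Tensor{A}$. \cref{lemma:equivalent_orthogonal_tt_factorizations} (with $\Matrix{Q}_0 = 1$ when $t = 1$) supplies an orthogonal $\Matrix{Q}_{t-1}$ satisfying $\mathrm{col}(\Matrix{T}_t^{<}) = \mathrm{col}((\Matrix{I}_{n_t} \otimes \Matrix{Q}_{t-1}^\trans) \Matrix{U}_t^{<})$, so, since both unfoldings have $r_t$ columns and the second has orthonormal columns, there exists $\Matrix{M}_t \in \Real^{r_t \times r_t}$ with
\begin{equation*}
    \Matrix{T}_t^{<} = (\Matrix{I}_{n_t} \otimes \Matrix{Q}_{t-1}^\trans) \Matrix{U}_t^{<} \Matrix{M}_t.
\end{equation*}
Chaining the interface identity $\Matrix{A}^{<t>} = (\Matrix{I}_{n_t} \otimes \Matrix{A}_{\leq t-1}) \Matrix{T}_t^{<} \Matrix{A}_{\geq t+1}$ from \cref{eq:interface_matrices} with orthonormality of the columns of $\Matrix{I}_{n_t} \otimes \Matrix{A}_{\leq t-1}$ and of $(\Matrix{I}_{n_t} \otimes \Matrix{Q}_{t-1}^\trans) \Matrix{U}_t^{<}$, and of the rows of $\Matrix{A}_{\geq t+1}$, collapses the spectral norm to $\Norm{\Matrix{A}^{<t>}}{2} = \Norm{\Matrix{M}_t}{2}$. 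The slice-extraction identity $\Tensor{T}_t(:, i, :) = (\Matrix{e}_i \otimes \Matrix{I}_{r_{t-1}})^\trans \Matrix{T}_t^{<}$ combined with the Kronecker rearrangement $(\Matrix{e}_i^\trans \otimes \Matrix{I}_{r_{t-1}})(\Matrix{I}_{n_t} \otimes \Matrix{Q}_{t-1}^\trans) = \Matrix{Q}_{t-1}^\trans (\Matrix{e}_i^\trans \otimes \Matrix{I}_{r_{t-1}})$ then gives
\begin{equation*}
    \Tensor{T}_t(:, i, :) = \Matrix{Q}_{t-1}^\trans \Tensor{U}_t(:, i, :) \Matrix{M}_t,
\end{equation*}
and the unitary invariance of $\Norm{\cdot}{\mathrm{u}}$ together with the standard bound $\Norm{\Matrix{X}\Matrix{Y}}{\mathrm{u}} \leq \Norm{\Matrix{X}}{\mathrm{u}} \Norm{\Matrix{Y}}{2}$ reduces the problem to the already-established value of $\Norm{\Tensor{U}_t}{\mathrm{u},\infty}$ from \cref{lemma:lr_coherence_orthogonal_core}.

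The fourth inequality follows by the mirror argument on the minimal right-orthogonal factorization $\{\Tensor{V}_s\}$: \cref{lemma:equivalent_orthogonal_tt_factorizations} produces $\Matrix{N}_t \in \Real^{r_{t-1} \times r_{t-1}}$ with $\Matrix{T}_t^{>} = \Matrix{N}_t \Matrix{V}_t^{>} (\Matrix{I}_{n_t} \otimes \Matrix{Q}_t)$ (with $\Matrix{Q}_d = 1$ at the edge $t = d$); the dual interface identity $\Matrix{A}^{<t-1>} = \Matrix{A}_{\leq t-1} \Matrix{T}_t^{>} (\Matrix{I}_{n_t} \otimes \Matrix{A}_{\geq t+1})$ forces $\Norm{\Matrix{N}_t}{2} = \Norm{\Matrix{A}^{<t-1>}}{2}$, the slice identity becomes $\Tensor{T}_t(:, i, :) = \Matrix{N}_t \Tensor{V}_t(:, i, :) \Matrix{Q}_t$, and submultiplicativity finishes the bound. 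The main obstacle is really just bookkeeping: one must verify that the two orthonormal ``wings'' of each interface factorization cleanly collapse the spectral norm of the unfolding onto $\Norm{\Matrix{M}_t}{2}$ or $\Norm{\Matrix{N}_t}{2}$, and trace the Kronecker manipulations that move $\Matrix{Q}_{t-1}^\trans$ or $\Matrix{Q}_t$ out of the way so that the orthogonal core emerges and the unitarily invariant norm can be invoked.
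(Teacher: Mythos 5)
Your proof is correct and follows essentially the same route as the paper: the $s \neq t$ cases are handled identically via \cref{lemma:lr_coherence_orthogonal_core,lemma:tt_core_coherence_invariant}, and for $s = t$ both arguments use the $t$-orthogonal interface factorization \cref{eq:interface_matrices} to identify $\Norm{\Matrix{A}^{<t>}}{2}$ with the spectral norm of the square factor and then apply the mixed bound $\Norm{\Matrix{X}\Matrix{Y}}{\mathrm{u}} \leq \Norm{\Matrix{X}}{\mathrm{u}} \Norm{\Matrix{Y}}{2}$ slice-wise; the only difference is that you take the orthonormal-column factor to be $(\Matrix{I}_{n_t} \otimes \Matrix{Q}_{t-1}^\trans)\Matrix{U}_t^{<}$ via \cref{lemma:equivalent_orthogonal_tt_factorizations}, whereas the paper takes a QR factor of $\Matrix{T}_t^{<}$ and then invokes \cref{lemma:tt_core_coherence_invariant}. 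One small citation nit: at the edge cases $t=1$ (third bound) and $t=d$ (fourth bound), \cref{lemma:equivalent_orthogonal_tt_factorizations} only states the opposite-side relation, but the identities you need, $\mathrm{col}(\Matrix{T}_1^{<}) = \mathrm{col}(\Matrix{U}_1^{<})$ and $\mathrm{row}(\Matrix{T}_d^{>}) = \mathrm{row}(\Matrix{V}_d^{>})$, follow directly from minimality and \cref{eq:interface_matrices} (exactly as in the proof of \cref{lemma:tt_core_coherence_invariant}), so the argument goes through.
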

\begin{proof}
The case $s \neq t$ follows from \cref{lemma:lr_coherence_orthogonal_core,lemma:tt_core_coherence_invariant}. If $s = t < d$, then according to \cref{eq:interface_matrices} the unfolding $\Matrix{A}^{<t>}$ can be factorized as $\Matrix{A}^{<t>} = \Matrix{P} \Matrix{T}_t^{<} \Matrix{Q}$, where $\Matrix{P}$ has orthonormal columns and $\Matrix{Q}$ has orthonormal rows, thus $\Matrix{A}^{<t>}$ and $\Matrix{T}_t^{<}$ have the same nonzero singular values. Consider a QR decomposition $\Matrix{T}_t^{<} = \Matrix{Q} \Matrix{R}$ and a third-order tensor $\Tensor{Q}$ such that $\Matrix{Q}^{<} = \Matrix{Q}$. Then \cite[Corollary 3.5.10]{horn1994topics}
\begin{equation*}
    \Norm{\Tensor{T}_t}{\mathrm{u},\infty} \leq \Norm{\Tensor{Q}}{\mathrm{u},\infty} \Norm{\Matrix{R}}{2} = \Norm{\Tensor{Q}}{\mathrm{u},\infty} \Norm{\Matrix{A}^{<t>}}{2}.
\end{equation*}
Note that $\mu_{\mathrm{u}, <}(\Tensor{Q}) = \mu_{\mathrm{u}, <}(\Tensor{T}_t)$ and use \cref{lemma:lr_coherence_orthogonal_core,lemma:tt_core_coherence_invariant}. The same argument applies to the case $s = t > 1$.
\end{proof}

\begin{corollary}
\label{corollary:tt_approx_coherence}
Let $\varepsilon \in (0,1)$ and $n_1, \ldots, n_d \in \N$. Let $r \in \N$ be given by \cref{eq:tt_rank}. For every $\Tensor{A} \in \Real^{n_1 \times \cdots \times n_d}$ of $\ttrank{A} = (r_1, \ldots, r_{d-1})$, there exists $\Tensor{B} \in \Real^{n_1 \times \cdots \times n_d}$ of $\ttrank{\Tensor{B}} \preccurlyeq r$ such that, with $\mu_<^{(s)} = \mu_{\Frob,<}^{(s)}(\Tensor{A})$ and $\mu_>^{(s)} = \mu_{\Frob,>}^{(s)}(\Tensor{A})$, 
\begin{equation*}
    \Norm{\Tensor{A} - \Tensor{B}}{\max} \leq \frac{\varepsilon}{\sqrt{n_1 \ldots n_d}} \cdot \min_{t \in [d-1]} \left\{ r_t \left( \prod\nolimits_{s = 1}^{t} \mu_{<}^{(s)} \right)^{\frac{1}{2}} \left( \prod\nolimits_{s = t+1}^{d} \mu_{>}^{(s)}\right)^{\frac{1}{2}} \Norm{\Matrix{A}^{<t>}}{2}\right\}.
\end{equation*}
\end{corollary}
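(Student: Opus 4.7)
The plan is to combine \cref{theorem:tt_approx} with \cref{lemma:gammatt_bound_via_tt_coherence}. By \cref{theorem:tt_approx}, for the chosen $r$ there exists $\Tensor{B}$ with $\ttrank{\Tensor{B}} \preccurlyeq r$ such that $\Norm{\Tensor{A} - \Tensor{B}}{\max} \leq \varepsilon \cdot \gamma_{\Frob}^{\TT}(\Tensor{A})$, so it suffices to prove that $\gamma_{\Frob}^{\TT}(\Tensor{A})$ is bounded by the bracketed minimum (divided by $\sqrt{n_1\ldots n_d}$). The idea is that each choice of $t \in [d-1]$ furnishes a different admissible TT factorization whose $\Norm{\cdot}{\Frob,\infty}$-product gives one entry in the minimum.

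Fix $t \in [d-1]$ and let $\Tensor{A} = \TT(\Tensor{T}_1, \ldots, \Tensor{T}_d)$ be a minimal $t$-orthogonal TT factorization; such a factorization exists by TT-SVD followed by a partial QR/LQ sweep. Since the Frobenius norm is unitarily invariant, \cref{lemma:gammatt_bound_via_tt_coherence} applies with $\mathrm{u} = \Frob$. The definition of $\gamma_{\Frob}^{\TT}$ in \cref{eq:gammatt} gives $\gamma_{\Frob}^{\TT}(\Tensor{A}) \leq \prod_{s=1}^{d} \Norm{\Tensor{T}_s}{\Frob,\infty}$, and the four cases of \cref{lemma:gammatt_bound_via_tt_coherence} provide exact expressions (or an upper bound at $s=t$) for each factor in terms of the TT core coherences, the TT ranks, the mode sizes, and $\Norm{\Matrix{A}^{<t>}}{2}$.

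The key computation is to multiply these $d$ factors and simplify. Using the bound at $s=t$ (valid since $t \leq d-1$) together with the equalities at $s \neq t$, the product under the square root contains (i) a coherence part that directly yields $\prod_{s=1}^{t} \mu_{<}^{(s)} \cdot \prod_{s=t+1}^{d} \mu_{>}^{(s)}$; (ii) a mode-size part that collapses to $1/(n_1 \ldots n_d)$; and (iii) a TT-rank part
\begin{equation*}
    \prod_{s=1}^{t} \frac{r_s}{r_{s-1}} \cdot \prod_{s=t+1}^{d} \frac{r_{s-1}}{r_s} = \frac{r_t}{r_0} \cdot \frac{r_t}{r_d} = r_t^2,
\end{equation*}
by telescoping together with $r_0 = r_d = 1$. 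Taking square roots and pulling out the spectral-norm factor $\Norm{\Matrix{A}^{<t>}}{2}$ produces exactly the summand inside the minimum. Since $t \in [d-1]$ was arbitrary, taking the minimum and multiplying by $\varepsilon$ concludes the proof.

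There is no substantial obstacle: the work is really just careful bookkeeping to verify the telescoping and to check that the index ranges in \cref{lemma:gammatt_bound_via_tt_coherence} match the available coherences (the left coherence $\mu_{<}^{(s)}$ is only defined for $s \leq d-1$ and the right coherence $\mu_{>}^{(s)}$ only for $s \geq 2$, which is exactly why the minimum runs over $t \in [d-1]$ and why the bound at $s = t$ involving $\Norm{\Matrix{A}^{<t>}}{2}$ is needed rather than the equalities that hold at other positions).
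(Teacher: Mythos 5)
Your proposal is correct and follows exactly the paper's argument: construct a minimal $t$-orthogonal TT factorization for each $t \in [d-1]$, bound $\gamma_{\Frob}^{\TT}(\Tensor{A})$ through \cref{lemma:gammatt_bound_via_tt_coherence}, and invoke \cref{theorem:tt_approx}. Your telescoping of the rank factors and the handling of the index ranges (using the $s=t<d$ bound with $\Norm{\Matrix{A}^{<t>}}{2}$) merely spell out the bookkeeping that the paper leaves implicit.
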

\begin{proof}
For every $t \in [d-1]$, we can construct a $t$-orthogonal TT factorization of $\Tensor{A}$, use it to estimate $\gamma_{\Frob}^{\TT}(\Tensor{A})$ via \cref{lemma:gammatt_bound_via_tt_coherence}, and apply \cref{theorem:tt_approx}.
\end{proof}

Let us compare the estimate of the approximation error in \cref{corollary:tt_approx_coherence} for $d > 2$ and $d = 2$. For each unfolding $\Matrix{A}^{<t>}$ there exists a matrix $\Matrix{B}_t$ of $\rank{\Matrix{B}_t} \leq \lceil c_2 \log(2e \cdot \prod_{s=1}^{d}n_s) / \varepsilon^2 \rceil$ such that $\Norm{\Matrix{A}^{<t>} - \Matrix{B}_t}{\max} \leq \frac{\varepsilon \cdot r_t}{\sqrt{n_1 \ldots n_d}} \sqrt{\mu_{<}(\Matrix{A}^{<t>}) \mu_{>}(\Matrix{A}^{<t>})} \Norm{\Matrix{A}^{<t>}}{2}$.
The low-rank factors of $\Matrix{B}_t$ are not assumed to possess any specific structure. If we request them to have low-rank factorizations themselves, the quality of approximation should deteriorate. It follows from \cref{eq:interface_matrices} that $\mu_{<}(\Matrix{A}^{<t>}) \leq \prod_{s = 1}^{t} \mu_{\Frob,<}^{(s)}(\Tensor{A})$ and $\mu_{>}(\Matrix{A}^{<t>}) \leq \prod_{s = t+1}^{d} \mu_{\Frob,>}^{(s)}(\Tensor{A})$. 
The gap in these inequalities and the $\frac{c_d}{c_2}$-times growth of the rank bound are the cost of imposing the TT structure on the factors of $\Matrix{B}_t$.

\begin{example}
The CP representation of identity tensors from \cref{ex:eye_tensors}, seen as a TT factorization, is minimal and $t$-orthogonal for all $t \in [d]$ at once, and \cref{corollary:tt_approx_coherence} guarantees the same bounds as \cref{corollary:cp_approx}.
\end{example}

\begin{remark}
An artificial example based on random matrices was presented in \cite{budzinskiy2024distance} for the matrix version of \cref{corollary:cp_approx}. Perhaps a similar example could be created for $d > 2$ once the properties of tensors (their TT core coherences, specifically) with random TT cores are better studied.
\end{remark}
\section{Numerical experiments}
\label{sec:numerical}

The numerical computation of optimal low-rank approximation in the maximum norm is substantially more challenging than in the Frobenius norm. The problem is known to be NP-hard even in the simplest case of rank-one matrix approximation \cite{gillis2019low}: the algorithms with provable convergence to the global minimizer combine alternating minimization with an exhaustive search over the sign patterns of the initial condition \cite{morozov2023optimal}. Instead of computing the optimal entrywise approximation error, we rely on a simple \textit{heuristic} algorithm to bound it from above.

\subsection{Algorithm} The method of alternating projections is designed to find a point in the intersection of two sets by computing successive Euclidean projections:
\begin{equation*}
    \Proj{\mathbb{Y}}{\Tensor{X}} = \set[\Big]{\Tensor{Y} \in \mathbb{Y}}{\Norm{\Tensor{Y} - \Tensor{X}}{\Frob} = \inf_{\Tensor{Y'} \in \mathbb{Y}} \Norm{\Tensor{Y'} - \Tensor{X}}{\Frob}}, \quad \mathbb{Y} \subset \Real^{n_1 \times \cdots \times n_d}.
\end{equation*}
Our two sets are low-rank tensors and tensors close to $\Tensor{A}$ in the maximum norm:
\begin{equation*}
    \mathbb{M}_{\bm{r}} = \set{\Tensor{X}}{\ttrank{\Tensor{X}} = \bm{r}}, \quad \mathbb{B}_\varepsilon(\Tensor{A}) = \set{\Tensor{Y}}{\Norm{\Tensor{A} - \Tensor{Y}}{\max} \leq \varepsilon}.
\end{equation*}
Starting from $\Tensor{X}_0 \in \mathbb{M}_{\bm{r}}$, the algorithm computes successive Euclidean projections
\begin{equation*}
    \Tensor{Y}_{t+1} \in \Proj{\mathbb{B}_\varepsilon(\Tensor{A})}{\Tensor{X}_t}, \quad \Tensor{X}_{t+1} \in \Proj{\mathbb{M}_{\bm{r}}}{\Tensor{Y}_{t+1}}, \quad t = 0, 1, 2,\ldots,
\end{equation*}
and locally converges to $\mathbb{M}_{\bm{r}} \cap \mathbb{B}_\varepsilon(\Tensor{A})$ if the intersection is nonempty. In \cite[section 7.5]{budzinskiy2023quasioptimal} and \cite{budzinskiy2024distance}, alternating projections were combined with a binary search over $\varepsilon$ to obtain an upper bound on the optimal entrywise approximation error.

The Euclidean projections minimize the Frobenius norm, making each step of the algorithm tractable. The Euclidean projection onto $\mathbb{B}_\varepsilon(\Tensor{A})$ can be computed as $\Proj{\mathbb{B}_\varepsilon(\Tensor{A})}{\Tensor{X}} = \{ \Tensor{X} + \Proj{\mathbb{B}_\varepsilon(0)}{\Tensor{X} - \Tensor{A}} \}$ and amounts to clipping the entries of $\Tensor{X} - \Tensor{A}$ whose absolute value exceeds $\varepsilon$. The Euclidean projection onto $\mathbb{M}_{\bm{r}}$ is given by the truncated SVD for matrices, but is not computable for tensors. However, the TT-SVD algorithm achieves quasioptimal approximation \cite{oseledets2011tensor}, which motivated the use of quasioptimal alternating projections in \cite{sultonov2023low, budzinskiy2023quasioptimal} with TT-SVD in place of $\Proj{\mathbb{M}_{\bm{r}}}{\cdot}$.

\subsection{Experimental settings}
\label{sec:experimental_settings}
We use this approach\footnote{The code is available at \url{https://github.com/sbudzinskiy/LRAP}.} to bound the optimal entrywise TT approximation error for identity tensors (\cref{ex:eye_tensors}) and random tensors with independent entries distributed uniformly in $(-1,1)$. We present the results for $n \times n$ matrices and $n \times n \times n$ tensors approximated with rank $r$ and TT rank $(r,r)$.

In each experiment, we set the order $d$, the size $n$, and the approximation rank $r$; generate $\Tensor{A}$ as an identity or random uniform tensor; generate the initial condition $\Tensor{X}_0 \in \mathbb{M}_{\bm{r}}$ with random standard Gaussian TT cores and multiply it by $r^{1-d}$; use alternating projections with binary search to obtain the absolute error estimate $\varepsilon$. We repeat every experiment 5 times and report the minimum value of $\varepsilon$ for identity tensors and the median for random uniform tensors (new $\Tensor{A}$ in each of the 5 experiments).

We do not consider tensors of order $d > 3$ because of the high computational costs. For the same reasons, we restrict the experiments to $n \leq 800$. At the same time, the theoretical bounds of \cref{theorem:matrix_approx,theorem:tt_approx} become meaningful only for larger $n$. Let $r(n,\varepsilon)$ be given by \cref{eq:matrix_rank}; then the inequality $r(n,\varepsilon) < n$, which guarantees that the approximant is not full-rank, requires $n > 18694$ for $\varepsilon = 0.1$ (see \cref{tab:rank_values}).

\begin{table}[tbhp]
\footnotesize
\caption{Rank bound \cref{eq:matrix_rank} of \cref{theorem:matrix_approx} for specific values of $n$ and $\varepsilon$.}
\label{tab:rank_values}
\begin{center}
\begin{tabular}{|c|c|c|c|c|} \hline
$\vphantom{n^{n^n}} n$ & $18694$ & $10^5$ & $10^7$ & $10^9$ \\ \hline
$r(n,0.1)$ & $18694$ & $21713$ & $30002$ & $38291$ \\ \hline
\end{tabular}
\end{center}
\end{table}

\noindent This means that our numerical results should be perceived as complementary to \cref{theorem:matrix_approx,theorem:tt_approx} rather than illustrating their theoretical bounds. Numerical experiments for $d = 2$ and $n$ up to $20000$ can be found in \cite{budzinskiy2024distance}.

\subsection{Numerical results} In \cref{fig:eye_vs_uniform_varr}, we study how the entrywise approximation error depends on the approximation rank $r$. The rank bounds of \cref{theorem:matrix_approx,theorem:tt_approx} suggest the decay rate $r^{-1/2}$, but then the error must drop to zero as $r \to n$. We observe the rate $\mathcal{O}((n-r)^{\alpha} r^{-\beta})$ with $\alpha,\beta > 0$ for identity tensors and random uniform matrices, but have no solid theory to explain this behavior. For random tensors, the rate is better described as $\mathcal{O}(\mathrm{polylog}(n-r))$, and we suppose that $\mathcal{O}((n-r)^{\alpha} r^{-\beta})$ could be detected with larger $n$ and $r$.

Note how close the error curves are for identity matrices and identity tensors---this is a manifestation of the fact that the TT factorization quasinorm \eqref{eq:gammatt} of identity tensors is bounded by one for all $n$ and $d$. This is not the case for random tensors: the increase of the order of the tensor significantly impacts the approximation error.

\begin{figure}[tbhp]
\centering
	\includegraphics[width=0.99\textwidth]{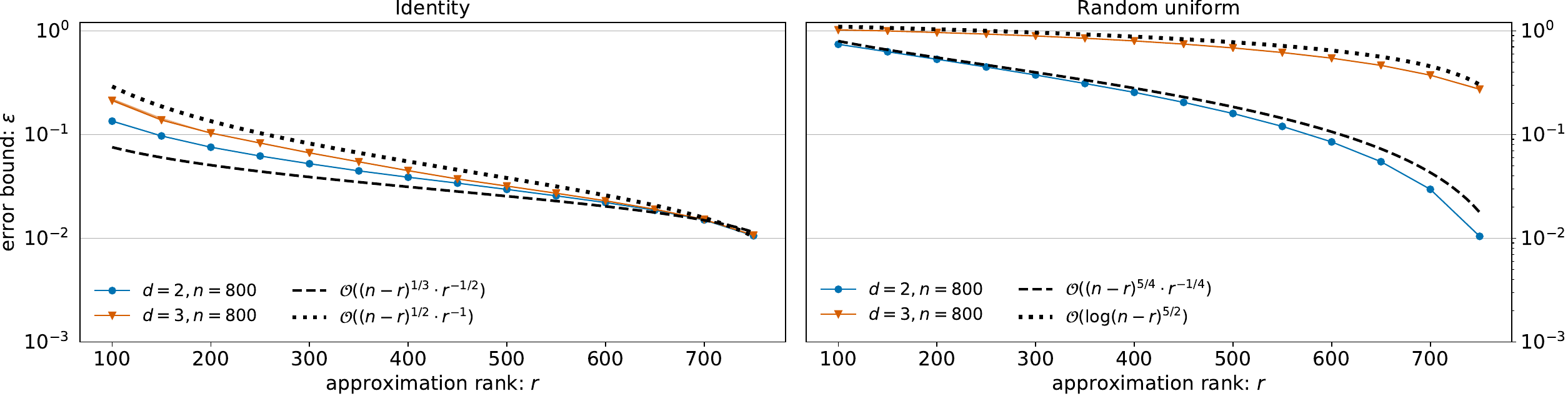}
\caption{Entrywise TT approximation errors computed with a combination of quasioptimal alternating projections and binary search for the varying approximation rank $r$.}
\label{fig:eye_vs_uniform_varr}
\end{figure}

In \cref{fig:eye_vs_uniform_varn}, we look at the growth of the entrywise approximation error as the size $n$ increases. For large $n$, \cref{theorem:matrix_approx,theorem:tt_approx} guarantee an error bound $\varepsilon \gamma_{\Frob}^{\TT}(\Tensor{A}_n)$ and rank bounds that grow as $\mathcal{O}(\log(n) / \varepsilon^2)$. With fixed (and large) approximation rank $r$, the guaranteed error is therefore $\mathcal{O}(\gamma_{\Frob}^{\TT}(\Tensor{A}_n) \sqrt{\log(n) / r})$. Meanwhile, the optimal entrywise approximation error is always upper bounded by $\Norm{\Tensor{A}_n}{\max}$, which can be attained with a vanishing sequence of low-rank tensors. Thus, the above error bound---which itself holds for large $n$ and $r$---necessarily loses meaning for extremely large $n$ if $\Norm{\Tensor{A}_n}{\max}$ are uniformly bounded. For identity tensors and relatively small $n$, we see in \cref{fig:eye_vs_uniform_varn} that the entrywise errors grow as $\mathcal{O}((n-r)^{\alpha})$ with the same values of $\alpha$ as in \cref{fig:eye_vs_uniform_varr}. Based on the numerical results of \cite{budzinskiy2024distance}, we expect the growth rate to slow down to $\mathcal{O}(\mathrm{polylog}(n-r))$ for larger $n$. We observe the rate $\mathcal{O}((n-r)^{\alpha})$ for random matrices too; the values of $\alpha$ are bigger than the corresponding values for identity matrices because the factorization norm of random matrices is not uniformly upper bounded for all $n$. The errors grow slower, as $\mathcal{O}(\mathrm{polylog}(n-r))$, for random tensors since they are close to the ultimate upper bound $\Norm{\Tensor{A}_n}{\max} = 1$.

\begin{figure}[tbhp]
\centering
	\includegraphics[width=0.99\textwidth]{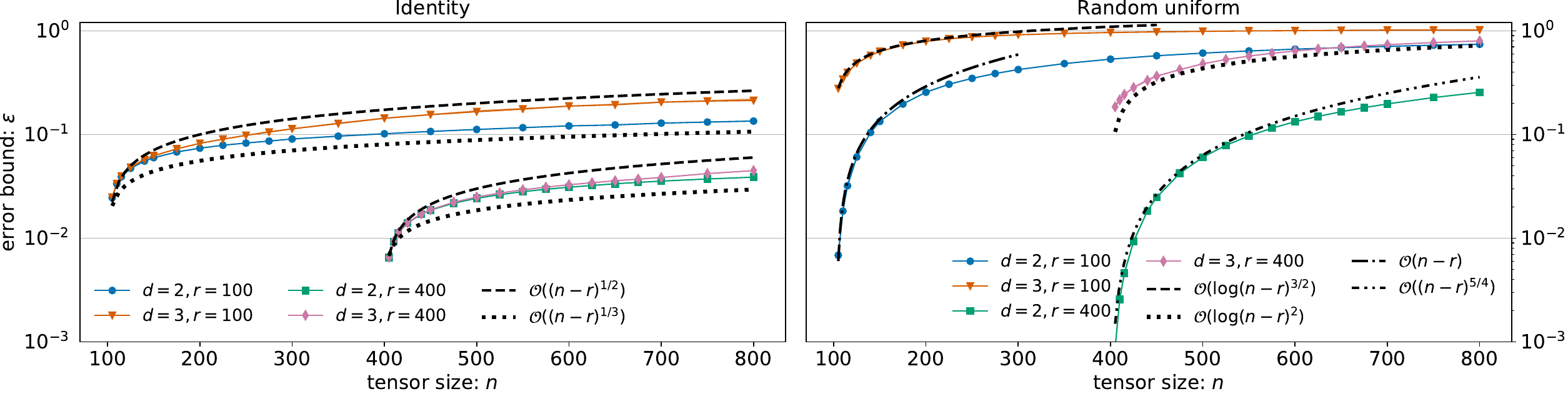}
\caption{Entrywise TT approximation errors computed with a combination of quasioptimal alternating projections and binary search for the varying tensor size $n$.}
\label{fig:eye_vs_uniform_varn}
\end{figure}
\section{Discussion}
\label{sec:dicussion}

\cref{theorem:tt_approx} provides a new a priori bound on the entrywise approximation error achievable with tensor networks. While we focused on the TT format---a tensor network with the simplest linear structure---our argument can be applied to more general tree tensor networks \cite{hackbusch2009new, grasedyck2010hierarchical, ye2018tensor, kramer2020tree}. Several modifications are, of course, in order. First, the analogue of $\gamma_{\Frob}^{\TT}$ \cref{eq:gammatt} needs to take into account the cores that depend on the ``inner'' indices, but not on the ``outer'' indices. For example, the appropriate formula for the Tucker factorization \cite{kolda2009tensor}
\begin{equation*}
    \Tensor{A}(i_1, \ldots, i_d) = \sum\nolimits_{\alpha_1, \ldots, \alpha_d} \Tensor{G}(\alpha_1, \ldots, \alpha_d) \Matrix{C}_1(i_1, \alpha_1) \ldots \Matrix{C}_d(i_d, \alpha_d)
\end{equation*}
would look like $\gamma^{\mathrm{Tucker}}(\Tensor{A}) = \inf\{ \Norm{\Tensor{G}}{\mathrm{sp}} \prod_{s = 1}^{d} \Norm{\Matrix{C}_s}{2,\infty}\}$ and involve the spectral~\mbox{tensor} norm \cite{wang2017operator}. Second, the analogues of $\Phi_{\Omega}$, $\Psi_{\Omega}$, and \cref{lemma:specific_trace_singleton} would reflect the graph structure of the tensor network with partial traces corresponding to edge contractions.

\cref{theorem:tt_approx} is most suitable for the analysis of families of tensors whose TT factorization quasinorm $\gamma_{\Frob}^{\TT}$ can be uniformly bounded with respect to the size of the tensor. The family of identity tensors in \cref{ex:eye_tensors} and function-generated tensors of \cite{budzinskiy2024big} are illustrative examples. The TT factorization quasinorm could be of interest on its own with potential applications in tensor completion (cf. \cite{ghadermarzy2019near, harris2021deterministic,cao20241}).

\section*{Acknowledgements}
I thank Stefan Bamberger for the discussion of \cite[Theorem 3]{bamberger2022hanson} and its proof; Vladimir Kazeev for reading parts of the text and suggesting improvements; Dmitry Kharitonov for correcting a sum in the proof of  \cref{theorem:specific_moments_bound};~and the anonymous referees for their comments that helped me improve the presentation.

\bibliographystyle{parts/siamplain}
\bibliography{}
\end{document}